\newtheorem{proposition}{Proposition}[section]
\newtheorem{lemma}[proposition]{Lemma}
\newtheorem{corollary}[proposition]{Corollary}
\newtheorem{theorem}[proposition]{Theorem}
\theoremstyle{definition}
\theoremstyle{remark}
\newcommand{\thlabel}[1]{\label{th:#1}}
\newcommand{\thref}[1]{Theorem~\ref{th:#1}}
\newcommand{\selabel}[1]{\label{se:#1}}
\newcommand{\seref}[1]{Section~\ref{se:#1}}
\newcommand{\lelabel}[1]{\label{le:#1}}
\newcommand{\leref}[1]{Lemma~\ref{le:#1}}
\newcommand{\prlabel}[1]{\label{pr:#1}}
\newcommand{\prref}[1]{Proposition~\ref{pr:#1}}
\newcommand{\colabel}[1]{\label{co:#1}}
\newcommand{\eqlabel}[1]{\label{eq:#1}}
\newcommand{\equref}[1]{(\ref{eq:#1})}
\def\equal#1{\smash{\mathop{=}\limits^{#1}}}
\newcommand{\Hom}{{\rm Hom}}
\newcommand{\Ker}{{\rm Ker}\,}
\newcommand{\im}{{\rm Im}\,}
\def\lan{\langle}
\def\ran{\rangle}
\def\ot{\otimes}
\def\rightact{\hbox{$\leftharpoonup$}}
\def\leftact{\hbox{$\rightharpoonup$}}
\newcommand{\Cc}{\mathcal{C}}
\newcommand{\Mm}{\mathcal{M}}
\def\*C{{}^*\hspace*{-1pt}{\Cc}}
\def\text#1{{\rm {\rm #1}}}
\def\ol{\overline}
\def\ul{\underline}
\begin{document}
\title[Weak bialgebras and monoidal categories]{Weak bialgebras and monoidal categories}
\author{G. B\"ohm}
\address{Research Institute for particle and nuclear Physics, P.O.B. 49,
H-1525\break Budapest 114, Hungary}
\email{g.bohm@rmki.kfki.hu}
\urladdr{http://www.rmki.kfki.hu/\~{}bgabr/}
\author{S. Caenepeel}
\address{Faculty of Engineering, 
Vrije Universiteit Brussel, B-1050 Brussels, Belgium}
\email{scaenepe@vub.ac.be}
\urladdr{http://homepages.vub.ac.be/\~{}scaenepe/}
\author{K. Janssen}
\address{Faculty of Engineering,
Vrije Universiteit Brussel, B-1050 Brussels, Belgium}
\email{krjanssen@hotmail.com}
\urladdr{http://homepages.vub.ac.be/\~{}krjansse/}
\thanks{The first named author was supported by the Hungarian
Scientific Research Fund OTKA K68195.The second and third named authors were supported by the research project G.0117.10  
``Equivariant Brauer groups and Galois deformations'' from
FWO-Vlaanderen. All authors wish to thank Joost
Vercruysse for fruitful discussions. } 
\dedicatory{To Mia Cohen, on the occasion of her retirement.}

\subjclass[2010]{16T05}

\keywords{weak bialgebra, monoidal category}

\begin{abstract}
We study monoidal structures on the category of (co)modules over a weak
bialgebra. Results due to Nill and Szlach\'anyi are unified and extended to
infinite algebras. We discuss the coalgebra structure on the source and target
space of a weak bialgebra.
\end{abstract}
\maketitle
\section*{Introduction}
Let $H$ be a $k$-algebra. It is well-known that bialgebra structures on $H$
are in bijective correspondence to monoidal structures on the category
${}_H\Mm$ of left $H$-modules that are fibred, which means that the forgetful
functor ${}_H\Mm\to {}_k\Mm$ is strongly monoidal.\\ 
Weak bialgebras are more general than bialgebras. The axioms $\Delta(1)= 1\ot
1$ and $\varepsilon(hk)=\varepsilon(h)\varepsilon(k)$ are replaced by four
weaker axioms (lm), (rm), (lc), (rc). Weak bialgebras and weak Hopf algebras
were introduced in \cite{BMS}, although the history of the subject goes back
to earlier notions, we refer to the introduction in \cite{BMS} for more
detail. For a survey on weak bialgebras (and Hopf algebras), we refer to
\cite{NV}.\\ 
A natural question that arises is whether the category of representations of a
weak bialgebra is monoidal. An indirect answer to this question is given by
the observation that a weak bialgebra  $H$  is a left bialgebroid
 over a separable Frobenius algebra which is denoted by $H_t$ in the
sequel,  see for example \cite[Sec. 3.2]{B}.  
Szlach\'anyi \cite{Sz} has reformulated the definition of a left bialgebroid
in terms of monoidal categories, implying that the category of representations
of a weak bialgebra is indeed monoidal  and the forgetful functor to the
module category of the ground ring possesses a so-called separable Frobenius
structure. The product on the category of modules over a weak bialgebra is
given by the tensor product over   
$H_t$. 
In \cite{Pastro}, it is shown that the (co)module category of any weak
bialgebra in an idempotent complete braided monoidal category is
monoidal with a separable Frobenius forgetful functor. More generally, in
\cite{BLS} those monads (termed weak bimonads) were described whose
Eilenberg-Moore category is monoidal such that the forgetful functor possesses
a separable Frobenius structure.   \\  
The question  about monoidality of the (co)module category of a weak
bialgebra has also been investigated by Nill \cite{Nill}, see also
\cite{BMS2}  and \cite{NV}.  Actually it suffices that only two of
the four axioms, namely (rm) and (lm) are fulfilled, and then the
representation category is monoidal. But this time the product is defined in a
different way, as a subspace  (or quotient)  of the usual tensor
product  (but no longer a (co)module tensor product),   
and the unit object is now a subspace of the dual of the prebialgebra. 
 Recall that  Nill only looks at finite dimensional weak Hopf
algebras, and this condition is needed in his result. This has the obvious
advantage that the self-duality of the axioms can be fully exploited, and
allows to conclude immediately that the category of comodules is monoidal once
(rc) and (lc) are fulfilled.\\ 
The aim of this note is to unify and extend the above mentioned results, in a
straightforward and elementary way. We look at weak bialgebras over
commutative rings that are not necessary finite. Then we can show that
conditions (lm) and (rm) imply that the representation category is monoidal,
while (lc) and (rc) imply that the corepresentation category is monoidal.
 Furthermore, we prove that, under these assumptions, the respective
forgetful functor is both monoidal and opmonoidal. It becomes essentially
strong (op)monoidal if any third one of the listed axioms holds. 
In this case the monoidal product in the (co)representation category is given 
by a module tensor product or, isomorphically, by a comodule tensor
product. The forgetful functor is of the separable Frobenius type if
and only if all of the four listed axioms hold.    \\
Now let $H$ be weak bialgebra, that is, all four axioms are satisfied. Then it
can be shown that the unit object of the representation category is isomorphic
to $H_t$, the subalgebra of $H$ that we mentioned above  and that 
was observed by  Szlach\'anyi \cite{Sz}  to be  a separable
Frobenius algebra. General properties of Frobenius algebras then show that
$H_t$ is also a coseparable Frobenius coalgebra, and  they can be used to
obtain an alternative proof of the fact that  the the tensor product on
the representation category is actually the  comodule tensor product
 over $H_t$ and is isomorphic to the tensor product over $H_t$. Similar
results can be proved for the corepresentation category.\\  
The paper is organized as follows. In \seref{1}, we revisit the elementary
properties of weak bialgebras, where we indicate which axioms are needed for
each property. 
We will not give full detail on all the proofs, but we have tried to organize
our text in such a way that the missing details can be easily filled
in. Sections \ref{se:2} and \ref{se:3} are devoted to resp. the categories of
modules and of comodules. In \seref{Frob}, we fist recall some general theory
on separable Frobenius algebras, and then we apply it to weak bialgebras.\\
We will use standard terminology and notation from classical Hopf algebra
theory, see for example \cite{Mont} or \cite{DascalescuNR}. In particular, we
use the Sweedler notation for comultiplications and coactions. In the Sweedler
indices, we use brackets $()$ for comultiplication and square brackets $[]$
for coactions. If several copies of $\Delta(1)$ occur in the same formula,
then we use primes to distinguish between them: we write $\Delta(1)=1_{(1)}\ot
1_{(2)}=1_{(1')}\ot 1_{(2')}$. For the general theory of monoidal 
categories, we refer to \cite{K}.

\section{Weak bialgebras}\selabel{1}
Let $k$ be a commutative ring, and $H$ a $k$-module carrying the structure of
$k$-algebra and $k$-coalgebra, with unit $1$, comultiplication $\Delta$ and
counit $\varepsilon$, and assume that $\Delta(hk)=\Delta(h)\Delta(k)$, for all
$h,k\in H$. Then we call $H$ a $k$-prebialgebra. $H$ is called finite if $H$
is finitely generated and projective as a $k$-module. If we work over a field,
then this means that $H$ is finite dimensional.\\
A prebialgebra is called left monoidal (resp. right monoidal, left comonoidal,
right comonoidal) if condition (lm) (resp. (rm), (lc), (rc)) holds, for all
$h,k,l\in H$: 
$$\begin{array}{lcl}
{\rm (lm)} ~~~\varepsilon(hkl)=
\varepsilon(hk_{(1)})\varepsilon(k_{(2)}l)&;&
 \Delta^2(1)=1_{(1)}\ot 1_{(2)}1_{(1')}\ot 1_{(2')}~~~{\rm (lc)}\\
{\rm (rm)} ~~~\varepsilon(hkl)=
\varepsilon(hk_{(2)})\varepsilon(k_{(1)}l)&;&
 \Delta^2(1)=1_{(1)}\ot 1_{(1')}1_{(2)}\ot 1_{(2')}~~~{\rm (rc)}
\end{array}$$
A prebialgebra that is at the same time left and right (co)monoidal is called
(co)monoidal, and then we say that condition (m) (resp. condition (c)) is
satisfied. (l) (resp. (r)) will mean that $H$ is a the same time left
(resp. right) monoidal and comonoidal. A weak bialgebra is a monoidal and
comonoidal prebialgebra. This terminology agrees with the terminology in
\cite{BMS}. What we call a prebialgebra is called a weak bialgebra in
\cite{Nill}.\\ 
Let $H^*$ be the linear dual of $H$. $H^*$ is a $k$-algebra, with the opposite
convolution as multiplication: $\lan h^*k^*,h\ran= \lan h^*,h_{(2)}\ran \lan
k^*,h_{(1)}\ran$. If $H$ is finite, then $H^*$ is a $k$-coalgebra, with
comultiplication given by the formula $\Delta(h^*)=h^*_{(1)}\ot h^*_{(2)}$ if
and only if $\lan h^*,hk\ran= \lan h^*_{(1)},k\ran\lan h^*_{(2)},h\ran$, for
all $h,k\in H$. So if $H$ is finite, then the dual of prebialgebra is again a
prebialgebra. 

\begin{lemma}\lelabel{00}
Let $H$ be a finite prebialgebra. $H$ satisfies (lm) (resp. (rm)) if and only
if $H^*$ satisfies (lc) (resp. (rc)). 
\end{lemma}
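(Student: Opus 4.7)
The plan is to convert each identity on $H^*$ into a scalar identity on $H$ via the canonical pairing. Since $H$ is finite (finitely generated and projective), the canonical map $H^{*\ot 3}\to (H^{\ot 3})^*$ is an isomorphism, so an equation in $H^{*\ot 3}$ is equivalent to the equality of its evaluations against all $h\ot k\ot l\in H^{\ot 3}$. The idea is then to show that the two axioms in question translate, via this pairing, into the very same scalar identity in $k$.

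I carry out the (lm)/(lc) case in detail; the (rm)/(rc) case is an identical computation with the two tensor factors of the middle tensorand interchanged. For the left-hand side of (lc) applied to $H^*$, iterating the defining relation $\lan h^*, ab\ran=\lan h^*_{(1)},b\ran \lan h^*_{(2)},a\ran$ of the coproduct and invoking coassociativity gives
\[
\lan \Delta^2(\varepsilon),\, h\ot k\ot l\ran = \varepsilon(hkl),
\]
where the factors of $H^*$ are paired against those of $H$ in the \emph{reverse} order, as is dictated by the opposite-convolution product on $H^*$. For the right-hand side, the multiplication formula $\lan h^*k^*,a\ran=\lan h^*,a_{(2)}\ran\lan k^*,a_{(1)}\ran$ splits the middle factor as $\lan \varepsilon_{(2)}\varepsilon_{(1')},k\ran=\lan \varepsilon_{(2)},k_{(2)}\ran\lan \varepsilon_{(1')},k_{(1)}\ran$, and regrouping the four resulting copies of $\varepsilon_{(*)}$ pairwise via the coproduct relation yields
\[
\lan \varepsilon_{(1)}\ot \varepsilon_{(2)}\varepsilon_{(1')}\ot \varepsilon_{(2')},\, h\ot k\ot l\ran = \varepsilon(hk_{(1)})\varepsilon(k_{(2)}l).
\]
Invoking the pairing isomorphism, the identity (lc) on $H^*$ is therefore equivalent to the scalar identity $\varepsilon(hkl)=\varepsilon(hk_{(1)})\varepsilon(k_{(2)}l)$ for all $h,k,l\in H$, which is precisely (lm) on $H$.

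The only real delicacy is the order-reversal in the pairing convention; once that is pinned down, the argument is a short symbolic computation and no essential obstacle remains. For (rm)/(rc), repeating the computation for the middle tensorand $\varepsilon_{(1')}\varepsilon_{(2)}$ of (rc) (opposite order in the product) instead produces the symmetric identity $\varepsilon(hk_{(2)})\varepsilon(k_{(1)}l)$, establishing the equivalence with (rm).
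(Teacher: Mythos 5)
Your proposal is correct and follows essentially the same route as the paper: evaluate both sides of the (lc) identity for $H^*$ against $h\ot k\ot l$ using the order-reversing pairing conventions, and observe that the resulting scalar identity is exactly (lm); the finiteness of $H$ guarantees that the evaluation map $H^{*\ot 3}\to (H^{\ot 3})^*$ is injective, so the scalar identity is genuinely equivalent to the tensor identity. The only difference is cosmetic: you make the injectivity step explicit, whereas the paper leaves it implicit in the phrase ``this is equivalent to''.
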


\begin{proof}
We will prove that $H^*$ satisfies (lc) if $H$ satisfies (lm). All the other
implications can be proved in a similar way. $H^*$ satisfies (lc) if and only
if 
$$\Delta^2(\varepsilon)=\varepsilon_{(1)}\ot
\varepsilon_{(2)}\varepsilon_{(1')}\ot \varepsilon_{(2')}.$$
This is equivalent to $\lan \varepsilon,hkl\ran$ being equal to
\begin{eqnarray*}
\lan \varepsilon_{(1)}, l\ran \lan \varepsilon_{(2)}\varepsilon_{(1')}, k\ran
\lan \varepsilon_{(2')}, h\ran
&=& \lan \varepsilon_{(1)}, l\ran \lan \varepsilon_{(2)},k_{(2)}\ran
\lan \varepsilon_{(1')},k_{(1)}\ran \lan \varepsilon_{(2')}, h\ran\\
&=& \lan \varepsilon, k_{(2)}l\ran \lan \varepsilon, hk_{(1)}\ran,
\end{eqnarray*}
for all $h,k,l\in H$, and this is clearly satisfied if $H$ satisfies (lm).
\end{proof}

If $H$ is a prebialgebra, then
$H$ is an $H^*$-bimodule, and $H^*$ is an $H$-bimodule:
$$h^*\leftact h \rightact k^*= \lan k^*,h_{(1)}\ran h_{(2)} \lan
h^*,h_{(3)}\ran~~;~~ \lan h\leftact h^*\rightact k,l\ran =\lan
h^*,klh\ran.$$  
Now consider the maps $f,f':\ H^*\to H~~;~~g,g':\ H\to H^*$,
$$\begin{array}{cc}
f(h^*)=1\rightact h^*=\lan h^*,1_{(1)}\ran 1_{(2)}&
f'(h^*)=h^*\leftact 1=\lan h^*,1_{(2)}\ran 1_{(1)}\\
g(h)= h\leftact \varepsilon=\lan \varepsilon,-h\ran&
g'(h)= \varepsilon\rightact h=\lan \varepsilon,h-\ran
\end{array}$$
If $H$ is finite, then we can identify $H$ and $H^{**}$, and then $f'=f^*$ and
$g'=g^*$. If we specify in our notation that these maps depend on the
prebialgebra $H$, that is, we write $f=f_H$, $g=g_H$, etc., then $f_{H^*}=g_H$
and $f'_{H^*}=g'_{H}$ if $H$ is finite. Now we define eight more maps 
$\varepsilon_t,\varepsilon_s,\ol{\varepsilon}_t,\ol{\varepsilon}_s:\ H\to H$ and
$\varepsilon^*_t,\varepsilon^*_s,\ol{\varepsilon}^*_t,\ol{\varepsilon}^*_s:\ H^*\to H^*$. 
$$\begin{array}{cccc}
\varepsilon_t=f\circ g&\varepsilon_s=f'\circ g'&
\ol{\varepsilon}_t=f\circ g'&\ol{\varepsilon}_s=f'\circ g\eqlabel{1.1.10}\\
\varepsilon^*_t=g\circ f&\varepsilon^*_s=g'\circ f'&
\ol{\varepsilon}^*_t=g\circ f'&\ol{\varepsilon}^*_s=g'\circ f\eqlabel{1.1.12}
\end{array}$$
We have that $(\varepsilon_t)^*=\varepsilon^*_s$,
$(\varepsilon_s)^*=\varepsilon^*_t$,
$(\ol{\varepsilon}_t)^*=\ol{\varepsilon}^*_t$ and
$(\ol{\varepsilon}_s)^*=\ol{\varepsilon}^*_s$. The explicit description of the
maps is as follows:
$$\begin{array}{cc}
\varepsilon_t(h)=\lan \varepsilon,1_{(1)}h\ran 1_{(2)}&
\varepsilon_s(h)=\lan \varepsilon,h1_{(2)}\ran 1_{(1)}\eqlabel{1.1.13}\\
\ol{\varepsilon}_t(h)=\lan \varepsilon,h1_{(1)}\ran 1_{(2)}&
\ol{\varepsilon}_s(h)=\lan \varepsilon,1_{(2)}h\ran 1_{(1)}\eqlabel{1.1.14}
\end{array}$$

\begin{lemma}\lelabel{1.1}
Let $H$ be a prebialgebra. Then we have the following properties, for all
$h,g\in H$:
\begin{eqnarray}
{\rm (rm)}& \Longleftrightarrow &
 h\varepsilon_t(g)=\lan \varepsilon,h_{(1)}g\ran h_{(2)}\eqlabel{1.1.1}\\
{\rm (rm)}& \Longleftrightarrow &
 \varepsilon_s(g)h=\lan \varepsilon,gh_{(2)}\ran h_{(1)}\eqlabel{1.1.2}\\
{\rm (lm)}& \Longleftrightarrow &
 \ol{\varepsilon}_t(g)h=\lan \varepsilon,gh_{(1)}\ran h_{(2)}\eqlabel{1.1.3}\\
{\rm (lm)}& \Longleftrightarrow &
h\ol{\varepsilon}_s(g)=\lan \varepsilon,h_{(2)}g\ran h_{(1)}\eqlabel{1.1.4}\\
{\rm (rc)}& \Longleftrightarrow &
 h_{(1)}\ot \varepsilon_t(h_{(2)})=1_{(1)}h\ot 1_{(2)}\eqlabel{1.1.5}\\
{\rm (rc)}& \Longleftrightarrow &
 \varepsilon_s(h_{(1)})\ot h_{(2)}=1_{(1)}\ot h1_{(2)}\eqlabel{1.1.6}\\
{\rm (lc)}& \Longleftrightarrow &
h_{(1)}\ot \ol{\varepsilon}_t(h_{(2)})=h1_{(1)}\ot 1_{(2)}\eqlabel{1.1.7}\\
{\rm (lc)}& \Longleftrightarrow &\ol{\varepsilon}_s(h_{(1)})\ot
h_{(2)}=1_{(1)}\ot 1_{(2)}h\eqlabel{1.1.8} 
\end{eqnarray}
\end{lemma}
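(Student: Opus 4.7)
The lemma contains eight equivalences, two per axiom (rm), (lm), (rc), (lc). All eight follow a common template: expand $\varepsilon_t$, $\varepsilon_s$, $\bar\varepsilon_t$, or $\bar\varepsilon_s$ via \equref{1.1.13}--\equref{1.1.14}, insert a copy of $\Delta(1)$ via $\Delta$-multiplicativity on $1\cdot h$ or $h\cdot 1$, apply the relevant axiom once, and collapse by counit. I sketch the representatives \equref{1.1.1} and \equref{1.1.5}; the remaining six cases follow by the same pattern with the appropriate left/right and $1_{(1)}\leftrightarrow 1_{(2)}$ swaps.

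For \equref{1.1.1}, the forward direction begins with $\varepsilon(h_{(1)}g) h_{(2)}$: insert $\Delta(1)$ via $\Delta(h) = \Delta(h\cdot 1) = h_{(1)}1_{(1)}\otimes h_{(2)}1_{(2)}$ to obtain $\varepsilon(h_{(1)}1_{(1)}g) h_{(2)}1_{(2)}$, apply (rm) with middle element $1_{(1)}$ to split $\varepsilon(h_{(1)}1_{(1)}g) = \varepsilon(h_{(1)}(1_{(1)})_{(2)})\,\varepsilon((1_{(1)})_{(1)}g)$, identify $\Delta(1_{(1)})\otimes 1_{(2)} = \Delta^2(1) = 1_{(1)}\otimes 1_{(2)}\otimes 1_{(3)}$ via coassociativity, and collapse the $\Delta(h)$-sum by counit applied to $\Delta(h\cdot 1_{(2')})$ (with $1_{(2)}\otimes 1_{(3)} = \Delta(1_{(2')})$), yielding $\varepsilon(1_{(1)}g)\, h\, 1_{(2)} = h\varepsilon_t(g)$. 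For the converse, apply $\varepsilon(\ell\,\cdot)$ to both sides of the assumed identity: this gives $\varepsilon(1_{(1)}g)\varepsilon(\ell h 1_{(2)}) = \varepsilon(h_{(1)}g)\varepsilon(\ell h_{(2)})$, and the specialization $\ell = 1$ combined with the counit identity on the right produces the $k=1$ instance of (rm), namely $\varepsilon(hg) = \varepsilon(h1_{(2)})\varepsilon(1_{(1)}g)$; substituting this back for general $\ell$ collapses the LHS to $\varepsilon(\ell h g)$ and yields (rm) after the relabeling $(\ell,h,g)\to (h,k,l)$.

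For \equref{1.1.5}, the forward direction expands $h_{(1)}\otimes\varepsilon_t(h_{(2)}) = \varepsilon(1_{(1)}h_{(2)}) h_{(1)}\otimes 1_{(2)}$ and inserts a primed $\Delta(1)$ via $\Delta(h) = \Delta(1\cdot h) = 1_{(1')}h_{(1)}\otimes 1_{(2')}h_{(2)}$ to produce $\varepsilon(1_{(1)}1_{(2')}h_{(2)})\, 1_{(1')}h_{(1)}\otimes 1_{(2)}$. Axiom (rc) identifies the three-fold pattern $\sum 1_{(1')}\otimes 1_{(1)}1_{(2')}\otimes 1_{(2)}$ (which is $\sum 1_{(1)}\otimes 1_{(1')}1_{(2)}\otimes 1_{(2')}$ after relabeling the two independent $\Delta(1)$-copies) with $\Delta^2(1) = 1_{(1)}\otimes 1_{(2)}\otimes 1_{(3)}$; realizing the last two slots as $\Delta(1_{(1)})$ via coassociativity and then applying the counit identity $\sum \varepsilon((1_{(1)}h)_{(2)})(1_{(1)}h)_{(1)} = 1_{(1)}h$ delivers $1_{(1)}h\otimes 1_{(2)}$. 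The converse follows by applying $\text{id}\otimes\Delta$ to the assumed identity, specializing $h = 1$, and contracting with counit on the third tensor slot; after coassociativity, this recovers the three-tensor identity (rc).

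The main obstacle throughout is the careful bookkeeping of multiple independent copies of $\Delta(1)$. Because $\Delta(1)\ne 1\otimes 1$ in a weak bialgebra, every invocation of $\Delta$-multiplicativity on $h\cdot 1$ or $1\cdot h$ introduces an auxiliary copy of $\Delta(1)$ that must be tracked with fresh primes, matched against the axiom, and ultimately re-absorbed via counit and coassociativity applied to $\Delta^2(1)$. The pivotal step in each of the eight cases is the single application of the axiom under proof, which is precisely what aligns the auxiliary $\Delta(1)$-decoration with the $\Delta^2(1)$-structure forced by coassociativity and allows the final collapse by counit.
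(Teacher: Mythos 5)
Your template is sound for all eight forward directions, and the converse you give for \equref{1.1.1} is correct: pairing the assumed identity against $\lan\varepsilon,\ell\,\cdot\,\ran$, extracting the $\ell=1$ instance $\lan\varepsilon,hg\ran=\lan\varepsilon,h1_{(2)}\ran\lan\varepsilon,1_{(1)}g\ran$, and feeding it back into the left-hand side does recover (rm). (The paper states this lemma without proof, so there is no argument of the authors to compare against.) However, the converse you describe for \equref{1.1.5} does not work as written. The composite you propose --- apply $\mathrm{id}\ot\Delta$, specialize $h=1$, contract the third slot with the counit --- is the identity operation on the $h=1$ instance of your hypothesis: it returns $1_{(1)}\ot\varepsilon_t(1_{(2)})=\Delta(1)$ and never produces the three-fold expression $1_{(1)}\ot 1_{(1')}1_{(2)}\ot 1_{(2')}$ that (rc) asserts equals $\Delta^2(1)$. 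The missing idea is that the hypothesis must be evaluated at a \emph{non-scalar} argument, namely at $h=1_{(2)}$, a leg of $\Delta(1)$ itself: from $(1_{(2)})_{(1)}\ot\varepsilon_t((1_{(2)})_{(2)})=1_{(1')}1_{(2)}\ot 1_{(2')}$ one gets
$1_{(1)}\ot 1_{(1')}1_{(2)}\ot 1_{(2')}=1_{(1)}\ot 1_{(2)}\ot\varepsilon_t(1_{(3)})=(\Delta\ot\mathrm{id})\bigl(1_{(1)}\ot\varepsilon_t(1_{(2)})\bigr)=(\Delta\ot\mathrm{id})(\Delta(1))=\Delta^2(1)$,
where the second equality is coassociativity and the third uses the $h=1$ instance of the hypothesis.

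This matters beyond the single case, because you dismiss \equref{1.1.6}--\equref{1.1.8} as following "by the same pattern": all four comodule-side converses need this same substitution of $1_{(1)}$ or $1_{(2)}$ into the hypothesis, and they are genuinely asymmetric to the module-side converses \equref{1.1.1}--\equref{1.1.4}, where pairing against an arbitrary scalar functional $\lan\varepsilon,\ell\,\cdot\,\ran$ suffices. The gap is easily repaired along the lines above, but the step as you describe it is circular and would not deliver (rc) or (lc).
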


We introduce several subspaces of $H$ and $H^*$.

$$\begin{array}{cccc}
\im(f)=H_L&\im(f')=H_R&\im(g)=H_L^*&\im(g')=H_R^*\\
\im(\varepsilon_t)=H_t&\im(\varepsilon_s)=H_s&\im(\varepsilon^*_t)=H^*_t&\im(\varepsilon^*_s)=H^*_s\\  
\im(\ol{\varepsilon}_t)=\ol{H}_t&\im(\ol{\varepsilon}_s)=\ol{H}_s&\im(\ol{\varepsilon}^*_t)=\ol{H}^*_t&\im(\ol{\varepsilon}^*_s)=\ol{H}^*_s\\ 
\end{array}$$
$$
\begin{array}{cc}
I_t=\{h\in H~|~\Delta(h)=1_{(1)}h\ot 1_{(2)}\}&
I_s=\{h\in H~|~\Delta(h)=1_{(1)}\ot h1_{(2)}\}\\
\ol{I}_t=\{h\in H~|~\Delta(h)=h1_{(1)}\ot 1_{(2)}\}&
\ol{I}_s=\{h\in H~|~\Delta(h)=1_{(1)}\ot 1_{(2)}h\}
\end{array}$$
$$\begin{array}{c}
I^*_t=\{h^*\in H^*~|~\lan h^*,kl\ran=\lan\varepsilon,kl_{(2)}\ran\lan
h^*,l_{(1)}\ran\}\\ 
I^*_s=\{h^*\in H^*~|~\lan h^*,kl\ran=\lan\varepsilon,k_{(1)}l\ran\lan
h^*,k_{(2)}\ran\}\\ 
\ol{I}^*_t=\{h^*\in H^*~|~\lan h^*,kl\ran=\lan\varepsilon,kl_{(1)}\ran\lan
h^*,l_{(2)}\ran\}\\ 
\ol{I}^*_s=\{h^*\in H^*~|~\lan h^*,kl\ran=\lan\varepsilon,k_{(2)}l\ran\lan
h^*,k_{(1)}\ran\} 
\end{array}
$$

\begin{lemma}\lelabel{1.2}
Let $H$ be a prebialgebra. Then
$$\begin{array}{ccc}
I_t\subset H_t\subset H_L\supset \ol{H}_t\supset \ol{I}_t&;&
I_s\subset H_s\subset H_R\supset \ol{H}_s\supset \ol{I}_s\\
I^*_t\subset H^*_t\subset H^*_L\supset \ol{H}^*_t\supset \ol{I}^*_t&;&
I^*_s\subset H^*_s\subset H^*_R\supset \ol{H}^*_s\supset \ol{I}^*_s
\end{array}$$
\begin{eqnarray}
{\rm (rc)}&\Longrightarrow&I_t=H_t=H_L;~~I_s=H_s=H_R;~~I_t^*=H_t^*;~~I_s^*=H_s^*;\eqlabel{1.2.1}\\
{\rm (lc)}&\Longrightarrow&\ol{I}_t=\ol{H}_t=H_L;~~\ol{I}_s=\ol{H}_s=H_R;~~
\ol{I}_t^*=\ol{H}_t^*;~~\ol{I}_s^*=\ol{H}_s^*;\eqlabel{1.2.2}\\
{\rm (rm)}&\Longrightarrow&I_t^*=H_t^*=H_L^*;~~I_s^*=H_s^*=H_R^*;~~I_t=H_t;~~I_s=H_s;\eqlabel{1.2.3}\\
{\rm (lm)}&\Longrightarrow&\ol{I}_t^*=\ol{H}_t^*={H}_L^*;~~\ol{I}_s^*=\ol{H}_s^*={H}_R^*;~~
\ol{I}_t=\ol{H}_t;~~\ol{I}_s=\ol{H}_s.\eqlabel{1.2.4}
\end{eqnarray}
Consequently
\begin{eqnarray}
{\rm (c)}&\Longrightarrow& H_t=\ol{H}_t;~~H_s=\ol{H}_s;\eqlabel{1.2.7}\\
{\rm (m)}&\Longrightarrow& H^*_t=\ol{H}^*_t;~~H^*_s=\ol{H}^*_s\eqlabel{1.2.6}.
\end{eqnarray}
\end{lemma}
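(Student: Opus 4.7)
The plan is to handle the basic inclusion chain first (which requires no axioms), then the four implications from (rc), (lc), (rm), (lm); the consequences \equref{1.2.7} and \equref{1.2.6} will be immediate.

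For the basic chain, both $\varepsilon_t=f\circ g$ and $\ol{\varepsilon}_t=f\circ g'$ factor through $f$, so $H_t,\ol{H}_t\subset\im(f)=H_L$, and the $s$- and $*$-analogues are identical. For $I_t\subset H_t$, applying $\varepsilon$ to the first tensor factor of the defining identity $\Delta(h)=1_{(1)}h\ot 1_{(2)}$ yields $h=\varepsilon(1_{(1)}h)1_{(2)}=\varepsilon_t(h)$; the analogous one-line calculation settles $I_s\subset H_s$, $\ol{I}_t\subset\ol{H}_t$, and $\ol{I}_s\subset\ol{H}_s$. For $I_t^*\subset H_t^*$, specializing the defining identity at $l=1$ gives $\lan h^*,k\ran=\lan h^*,\varepsilon_s(k)\ran=\varepsilon_t^*(h^*)(k)$, so $h^*=\varepsilon_t^*(h^*)\in H_t^*$; the three other dual inclusions are symmetric.

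For the implication from (rc) it suffices to prove $H_L\subset I_t$, which collapses the whole chain. Given $x=\lan h^*,1_{(1)}\ran 1_{(2)}\in H_L$, I would compute $\Delta(x)=\lan h^*,1_{(1)}\ran 1_{(2)(1)}\ot 1_{(2)(2)}$ and then use (rc) in the form $\Delta^2(1)=1_{(1)}\ot 1_{(1')}1_{(2)}\ot 1_{(2')}$ to rewrite this as $1_{(1')}x\ot 1_{(2')}$. The $s$-side is analogous via \equref{1.1.6}. For the dual equalities $I_t^*=H_t^*$ and $I_s^*=H_s^*$, take $h^*\in H_t^*$, write $h^*=k^*\circ\varepsilon_s$ using $(\varepsilon_t)^*=\varepsilon_s^*$, and combine \equref{1.1.6} in the form $\varepsilon_s(l_{(1)})\ot l_{(2)}=1_{(1)}\ot l1_{(2)}$ with pairing against $k^*$ in the first slot and $\varepsilon(k\cdot -)$ in the second to obtain $\lan h^*,kl\ran=\lan\varepsilon,kl_{(2)}\ran\lan h^*,l_{(1)}\ran$. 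The implication from (lc) is the symmetric mirror using \equref{1.1.7}--\equref{1.1.8}.

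For (rm), the dual containment $H_L^*\subset I_t^*$ is direct: any $h^*=\lan\varepsilon,-h\ran\in\im(g)=H_L^*$ satisfies the $I_t^*$-identity $\lan\varepsilon,klh\ran=\lan\varepsilon,kl_{(2)}\ran\lan\varepsilon,l_{(1)}h\ran$, which is precisely (rm) with the variables permuted cyclically. For $H_t\subset I_t$ I would compute $\Delta(\varepsilon_t(h))=\varepsilon(1_{(1)}h)\Delta(1_{(2)})$, apply coassociativity to rewrite this as $\varepsilon(1_{(1)(1)}h)1_{(1)(2)}\ot 1_{(2)}$, and then invoke \equref{1.1.1} with $k=1_{(1)}$ and $g=h$ to fold the first tensor factor into $1_{(1)}\varepsilon_t(h)$, giving $\Delta(\varepsilon_t(h))=1_{(1)}\varepsilon_t(h)\ot 1_{(2)}$. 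The (lm) case is the symmetric mirror using \equref{1.1.3}. Finally \equref{1.2.7} and \equref{1.2.6} are immediate: under (c) both (rc) and (lc) force $H_t=H_L=\ol{H}_t$ (and similarly for $s$), and under (m) the dual-side versions collapse to give $H_t^*=H_L^*=\ol{H}_t^*$. The main obstacle is purely notational, namely keeping the sixteen subspaces straight and pairing each axiom with the correct one of the eight equivalent formulations in \leref{1.1}; no individual step is hard.
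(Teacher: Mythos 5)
Your proposal is correct and follows essentially the same route as the paper: prove the unconditional chain via the counit axiom (for $I_t\subset H_t$ and its analogues) and the factorization $\varepsilon_t=f\circ g$ (for $H_t\subset H_L$), then establish the reverse inclusions $H_L\subset I_t$ under (rc), $H_t\subset I_t$ under (rm) via \equref{1.1.1}, and the dual statements directly via \equref{1.1.6} and (rm), with the remaining cases by symmetry and the consequences \equref{1.2.7}--\equref{1.2.6} by combining the two halves. (Only a trivial slip: for $h^*\in H_t^*$ the identity giving $h^*=k^*\circ\varepsilon_s$ is $(\varepsilon_s)^*=\varepsilon_t^*$, not $(\varepsilon_t)^*=\varepsilon_s^*$; the computation you then perform is the intended one.)
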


\begin{proof}
We first prove the statements relating $I_t$, $H_t$ and $H_L$.
If $h\in I_t$, then $\varepsilon_t(h)=\varepsilon(1_{(1)}h)1_{(2)}=\varepsilon(h_{(1)})h_{(2)}=h$,
hence $h\in H_t$.\\
Obviously $H_t=\im(f\circ g)\subset \im (f)=H_L$.\\
Assume that (rc) holds, and take $h=f(h^*)=\lan h^*,1_{(1)}\ran 1_{(2)}\in H_L=\im(f)$.
Then $\Delta(h)=\lan h^*,1_{(1)}\ran 1_{(1')}1_{(2)}\ot 1_{(2')}=
1_{(1')}h\ot 1_{(2')}$, hence $h\in I_t$.\\
Now assume that (rm) holds, and take $h=\varepsilon_t(k)\in H_t$. 
Then
$\Delta(h)=\varepsilon(1_{(1)}k)\Delta(1_{(2)})=\varepsilon(1_{(1)}k)1_{(2)}\ot
1_{(3)} \equal{\equref{1.1.1}} 1_{(1)}\varepsilon_t(k)\ot 1_{(2)}=1_{(1)}h\ot 1_{(2)}$,
and it follows that $h\in I_t$.\\
If $H$ is finite, then the statements relating $I^*_t$, $H^*_t$ and $H^*_L$ follow
by applying the statements relating $I_t$, $H_t$ and $H_L$ to the prebialgebra $H^*$.
They can be proved easily without the finiteness assumption.
First assume that $h^*\in I_t^*$. Then for all $h\in H$, we have that
$\lan \varepsilon_t^*(h^*),h\ran=\lan h^*,\varepsilon_s(h)\ran=
\lan \varepsilon,h1_{(2)}\ran\lan h^*,1_{(1)}\ran =\lan h^*,h1\ran =\lan h^*,h\ran$, hence
$\varepsilon_t^*(h^*)=h^*\in H_t^*$.\\
Obviously $H_t^*=\im(g\circ f)\subset \im(g)=H_L^*$.\\
Now assume that (rm) holds, and take $h^*\in \im(g)=H_L^*$. Then there exists
$h\in H$ such that $\lan h^*,k\ran=\lan\varepsilon,kh\ran$, for all $k\in H$. Then we have
for all $k,l\in H$ that
$\lan h^*,kl\ran=\lan \varepsilon,klh\ran\equal{{\rm (rm)}}
\lan \varepsilon, kl_{(2)}\ran \lan \varepsilon, l_{(1)}h\ran=
\lan \varepsilon, kl_{(2)}\ran \lan h^*,l_{(1)}\ran$
and this means that $h^*\in I_t^*$.\\
Now assume that (rc) holds, and take $\varphi\in H_t^*$. This means that there
exists $h^*\in H^*$ such that $\lan \varphi,h\ran=\lan h^*,\varepsilon_s(h)\ran=
\lan \varepsilon, h1_{(2)}\ran \lan h^*,1_{(1)}\ran$,
and then we have for all $k,l\in H$ that
$$
\lan\varphi, kl\ran= \lan \varepsilon, kl1_{(2)}\ran \lan h^*,1_{(1)}\ran
\equal{\equref{1.1.6}}\lan \varepsilon,kl_{(2)}\ran\lan h^*,\varepsilon_s(l_{(1)})\ran
=\lan \varepsilon,kl_{(2)}\ran\lan \varphi,l_{(1)}\ran,
$$
and this implies that $\varphi\in I_t^*$. The proof of all the other inclusions is similar.
\end{proof}

Note that, in fact, in the cases of \equref{1.2.3} and  \equref{1.2.4}, also
the opposite implications $I_t^*=H_L^*\Rightarrow \text{(rm)} \Leftarrow I_s^*=H_R^*$
and $\ol I_t^*=H_L^*\Rightarrow \text{(lm)} \Leftarrow \ol
I_s^*=H_R^*$ hold.\\
Recall that a $k$-module $M$ is called locally projective if for each finite subset
$\{m_1,\cdots, m_N\}\subset M$, there exist finite subsets $\{e^*_1,\cdots,e^*_m\}\subset M^*$
and $\{e_1,\cdots,e_m\}\subset M$ such that $m_i=\sum_{j=1}^m \lan e_j^*,m_i\ran e_j$, for all
$ i\in \{1,\cdots,N\}$. It is then easy to show that the natural map
$\alpha:\ M\ot N\to \Hom(M^*,N)$, $\alpha(m\ot n)(m^*)=\lan m^*,m\ran n$ is injective, for every
$k$-module $N$,
in other words, $M$ satisfies the $\alpha$-condition. Applying this property, it is easy to
show that, in the case where $H$ is locally projective,
$I_t=H_L\Rightarrow \text{(rc)} \Leftarrow I_s=H_R$ and $\ol
I_t=H_L\Rightarrow \text{(lc)} \Leftarrow \ol I_s=H_R$. 

\begin{lemma}\lelabel{1.3}
Let $H$ be a prebialgebra.
If (rm) or (rc) holds, then the maps $\varepsilon_t$, $\varepsilon_s$,
$\varepsilon_t^*$ and $\varepsilon_s^*$ are idempotent; if (lm) or (lc) holds,
then the maps $\ol{\varepsilon}_t$, $\ol{\varepsilon}_s$,
$\ol{\varepsilon}_t^*$ and $\ol{\varepsilon}_s^*$ are idempotent.
\end{lemma}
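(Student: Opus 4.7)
The plan is to exploit the work already packaged in \leref{1.2}, together with the elementary observation that an endomorphism of a module is idempotent if and only if it acts as the identity on its own image. Since $\im(\varepsilon_t)=H_t$, showing that $\varepsilon_t$ is idempotent reduces to showing $\varepsilon_t(h)=h$ for every $h\in H_t$, and analogously for the other seven maps.

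First I would isolate the following computation, which already appears implicitly in the proof of \leref{1.2}: if $h\in I_t$, then, using $\Delta(h)=1_{(1)}h\ot 1_{(2)}$, one has
\[
\varepsilon_t(h)=\lan\varepsilon,1_{(1)}h\ran 1_{(2)}=\lan\varepsilon,h_{(1)}\ran h_{(2)}=h,
\]
so $\varepsilon_t$ restricts to the identity on $I_t$. The parallel identities $\ol{\varepsilon}_t|_{\ol I_t}=\id$, $\varepsilon_s|_{I_s}=\id$, $\ol{\varepsilon}_s|_{\ol I_s}=\id$ follow by the same one-line computation from the defining relations of $\ol I_t$, $I_s$, $\ol I_s$; and the dual versions $\varepsilon^*_t|_{I^*_t}=\id$ etc.\ follow by the short argument given in the proof of \leref{1.2}, e.g.\ for $h^*\in I^*_t$ and any $h\in H$,
\[
\lan\varepsilon^*_t(h^*),h\ran=\lan h^*,\varepsilon_s(h)\ran=\lan\varepsilon,h1_{(2)}\ran\lan h^*,1_{(1)}\ran=\lan h^*,h\ran.
\]

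Next I would invoke the relevant implications of \leref{1.2} to identify each ``$I$-space'' with the corresponding ``image-space''. Under hypothesis (rc) or (rm), \equref{1.2.1} and \equref{1.2.3} together give $I_t=H_t$, $I_s=H_s$, $I_t^*=H_t^*$ and $I_s^*=H_s^*$; combined with the previous step this shows that each of $\varepsilon_t,\varepsilon_s,\varepsilon_t^*,\varepsilon_s^*$ acts as the identity on its image, hence is idempotent. Symmetrically, under (lc) or (lm), \equref{1.2.2} and \equref{1.2.4} yield $\ol I_t=\ol H_t$, $\ol I_s=\ol H_s$, $\ol I_t^*=\ol H_t^*$, $\ol I_s^*=\ol H_s^*$, which combined with the corresponding one-line identity computations establishes idempotence of $\ol{\varepsilon}_t,\ol{\varepsilon}_s,\ol{\varepsilon}_t^*,\ol{\varepsilon}_s^*$.

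There is no real obstacle here: the genuine content sits in \leref{1.2}, which already does the work of equating the ``defined-by-a-coproduct-relation'' subspaces with the images of the projections. The only point demanding any care is making sure that in each of the eight cases one cites the correct pair among \equref{1.2.1}--\equref{1.2.4} — that is, matching the axiom assumed (one of (rm), (rc), (lm), (lc)) to the appropriate equality of subspaces for the particular map under discussion. Once that bookkeeping is done, the proof is a uniform ``identity-on-image'' argument repeated eight times.
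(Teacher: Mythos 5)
Your proof is correct; the paper actually states \leref{1.3} without proof, and your argument is precisely the intended way to fill it in. The observation that a map is idempotent iff it is the identity on its image, combined with the identity computations on $I_t$, $I_s$, $I_t^*$, $I_s^*$ (and their barred analogues) already carried out in the proof of \leref{1.2}, together with the equalities $I_t=H_t$, etc., supplied by \equref{1.2.1}--\equref{1.2.4} under the stated hypotheses, is exactly the machinery the paper sets up immediately beforehand.
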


\begin{lemma}\lelabel{lem:eps_1}
In any prebialgebra $H$, the following identities hold.
\begin{eqnarray}
&\varepsilon_s(1_{(1)}) \ot 1_{(2)}=1_{(1)}\ot \varepsilon_t(1_{(2)});\quad
&\ol\varepsilon_s(1_{(1)}) \ot 1_{(2)}=1_{(1)}\ot \ol\varepsilon_t(1_{(2)});\eqlabel{eq:eps_1}\\
&\varepsilon_t(1_{(1)}) \ot 1_{(2)}=1_{(2)}\ot \ol\varepsilon_t(1_{(1)});\quad
&\ol\varepsilon_s(1_{(2)}) \ot 1_{(1)}=1_{(1)}\ot \varepsilon_s(1_{(2)}) \eqlabel{eq:eps_1_tw}.
\end{eqnarray}
\end{lemma}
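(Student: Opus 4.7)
The plan is to verify each of the four identities by simply unfolding the definitions \equref{1.1.13}--\equref{1.1.14} of $\varepsilon_s,\varepsilon_t,\ol\varepsilon_s,\ol\varepsilon_t$. Once both sides are written out, each becomes a double sum over two copies of $\Delta(1)$, and the left- and right-hand sides turn out to be related merely by interchanging the primed and unprimed Sweedler indices labeling these two copies.

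For instance, for the first identity of \equref{eps_1}, direct expansion gives
\[
\varepsilon_s(1_{(1)})\ot 1_{(2)} \;=\; \lan\varepsilon,1_{(1)}1_{(2')}\ran\, 1_{(1')}\ot 1_{(2)}
\quad\text{and}\quad
1_{(1)}\ot \varepsilon_t(1_{(2)}) \;=\; \lan\varepsilon,1_{(1')}1_{(2)}\ran\, 1_{(1)}\ot 1_{(2')},
\]
and these two expressions differ only by a swap of primed and unprimed indices; this swap is harmless because both $1_{(1)}\ot 1_{(2)}$ and $1_{(1')}\ot 1_{(2')}$ denote the same element $\Delta(1)\in H\ot H$. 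The second identity of \equref{eps_1} is handled identically, with $\varepsilon_s,\varepsilon_t$ replaced throughout by $\ol\varepsilon_s,\ol\varepsilon_t$. For the two identities in \equref{eps_1_tw}, the outer copy of $\Delta(1)$ appears with its tensor factors in opposite positions on the two sides, but this positional flip is exactly compensated by the swap $\varepsilon_t\leftrightarrow\ol\varepsilon_t$ (resp.\ $\varepsilon_s\leftrightarrow\ol\varepsilon_s$), so once again the interchange of primed and unprimed Sweedler indices matches the two sides.

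There is essentially no obstacle beyond bookkeeping: no axiom of a prebialgebra is invoked apart from the mere fact that $\Delta(1)\in H\ot H$ makes sense, and in particular none of the conditions (lm), (rm), (lc), (rc) is needed. The only care required is in keeping straight which of the two occurrences of $\Delta(1)$ in each expression is the one decorated with primes, so that the relabeling is applied consistently on both factors of that copy.
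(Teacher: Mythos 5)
Your proof is correct, and the paper in fact omits any proof of this lemma, treating it as a routine verification; your argument --- expand both sides via \equref{1.1.13}--\equref{1.1.14} into double sums over two copies of $\Delta(1)$ and observe that the two sides differ only by renaming the dummy Sweedler indices of those copies --- is exactly the intended computation, and you are right that no prebialgebra axiom beyond the definitions is used.
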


\begin{lemma}\lelabel{1.4}
Let $H$ be a prebialgebra.
\begin{eqnarray*}
{\rm (rm)}&\Longrightarrow& \varepsilon_t(h)\varepsilon_t(g)=\varepsilon_t(\varepsilon_t(h)g)
~{\rm and}~\varepsilon_s(h)\varepsilon_s(g)=\varepsilon_s(h\varepsilon_s(g))
\eqlabel{1.4.1}\\
{\rm (lm)}&\Longrightarrow&
\ol{\varepsilon}_t(h)\ol{\varepsilon}_t(g)=\ol{\varepsilon}_t(h\ol{\varepsilon}_t(g))
~{\rm and}~\ol{\varepsilon}_s(h)\ol{\varepsilon}_s(g)=\ol{\varepsilon}_s(\ol{\varepsilon}_s(h)g)
\eqlabel{1.4.2}\\
{\rm (rc)}&\Longrightarrow&
\varepsilon^*_t(h^*)\varepsilon^*_t(g^*)=\varepsilon^*_t(\varepsilon^*_t(h^*)g^*)
~{\rm and}~\varepsilon^*_s(h^*)\varepsilon^*_s(g^*)=\varepsilon^*_s(h^*\varepsilon^*_s(g^*))
\eqlabel{1.4.3}\\
{\rm (lc)}&\Longrightarrow&
\ol{\varepsilon}^*_t(h^*)\ol{\varepsilon}^*_t(g^*)=\ol{\varepsilon}^*_t(h^*\ol{\varepsilon}^*_t(g^*))
~{\rm and}~\ol{\varepsilon}^*_s(h^*)\ol{\varepsilon}^*_s(g^*)=\ol{\varepsilon}^*_s(\ol{\varepsilon}^*_s(h^*)g^*) 
\eqlabel{1.4.4}
\end{eqnarray*}
Consequently $H_t$ and $H_s$ are subalgebras of $H$ if (rm) holds;
$\ol{H}_t$ and $\ol{H}_s$ are subalgebras if (lm) holds; $H^*_t$ and $H^*_s$
are subalgebras of $H^*$ if (rc) holds; $\ol{H}^*_t$ and $\ol{H}^*_s$ are subalgebras of $H^*$ if (lc) holds. 
\end{lemma}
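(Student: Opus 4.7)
The plan is to derive each identity by substituting into the characterisations of \leref{1.1} and then using \leref{1.2} to replace the coproduct of the projected element by a simpler expression. As a model case, consider the identity $\varepsilon_t(h)\varepsilon_t(g)=\varepsilon_t(\varepsilon_t(h)g)$ under (rm). By \equref{1.1.1}, for every $k\in H$ one has $k\varepsilon_t(g)=\langle\varepsilon,k_{(1)}g\rangle k_{(2)}$; specialising to $k=\varepsilon_t(h)$ and using $H_t=I_t$ from \equref{1.2.3} to replace $\Delta(\varepsilon_t(h))$ by $1_{(1)}\varepsilon_t(h)\ot 1_{(2)}$ immediately rewrites the right-hand side as $\langle\varepsilon,1_{(1)}\varepsilon_t(h)g\rangle 1_{(2)}=\varepsilon_t(\varepsilon_t(h)g)$.

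The companion identity for $\varepsilon_s$ under (rm) uses \equref{1.1.2} and $H_s=I_s$ in the same way, after setting the ``$h$'' of \equref{1.1.2} equal to $\varepsilon_s(g')$. The two identities under (lm) follow by the mirror argument from \equref{1.1.3}--\equref{1.1.4} combined with \equref{1.2.4}, substituting $\ol\varepsilon_t(h)$ or $\ol\varepsilon_s(g)$ for the free variable whose coproduct is to be simplified.

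For the four identities on $H^*$ under (rc) or (lc), the same template is followed but the computation has to be carried out entirely inside $H$, since $H^*$ need not be a coalgebra in the infinite case. One unpacks the formulas $\langle\varepsilon^*_t(h^*),h\rangle=\langle h^*,\varepsilon_s(h)\rangle$ and its three analogues, pairs the desired convolution identity in $H^*$ against an arbitrary $h\in H$ so that it becomes a tensor-indexed identity in $H\ot H$, and then invokes \equref{1.1.5}--\equref{1.1.8} together with the relevant clause of \equref{1.2.1}--\equref{1.2.2} to rewrite $\Delta(\varepsilon_s(h))$ (and similar expressions) in the simpler form guaranteed by $\varepsilon_s(h)\in I_s$.

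The closing claim that $H_t,H_s,\ol H_t,\ol H_s,H^*_t,H^*_s,\ol H^*_t,\ol H^*_s$ are subalgebras follows immediately: \leref{1.3} makes each of the eight projections idempotent under the stated axiom, so its image coincides with its fixed-point set; the identities just proved give closure under multiplication (e.g.\ $\varepsilon_t(h)\varepsilon_t(g)=\varepsilon_t(\varepsilon_t(h)g)\in\im(\varepsilon_t)=H_t$); and the unit lies in the image via, e.g., $\varepsilon_t(1)=\langle\varepsilon,1_{(1)}\rangle 1_{(2)}=1$. The main obstacle I foresee is the bookkeeping on the dual side: one has to be careful that the convolution product in $H^*$ is evaluated against the correct Sweedler leg, and that the $I^*_?$ characterisation is applied to a linear functional on $H$ rather than being read as a coproduct identity in $H^*$.
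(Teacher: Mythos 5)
Your proposal is correct and follows essentially the same route as the paper: the model case is verbatim the paper's argument (apply \equref{1.1.1} with $k=\varepsilon_t(h)$ and use $H_t=I_t$ from \equref{1.2.3} to simplify $\Delta(\varepsilon_t(h))$), and your treatment of the dual identities under (rc)/(lc) — pairing against $h\in H$, using \equref{1.1.5}--\equref{1.1.8} and the $I^*$-membership from \equref{1.2.1}--\equref{1.2.2} applied to the functional — is exactly the mechanism of the paper's second displayed computation. The only (harmless) addition is your explicit check that the unit lies in each image.
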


\begin{proof}
Assume that (rm) holds; then $I_t=H_t$, and we have for all $h,g\in H$ that
$\varepsilon_t(h)\varepsilon_t(g)\equal{\equref{1.1.1}}\lan\varepsilon,\varepsilon_t(h)_{(1)}g\ran
\varepsilon_t(h)_{(2)}=\lan\varepsilon,1_{(1)}\varepsilon_t(h)g\ran 1_{(2)}=
\varepsilon_t(\varepsilon_t(h)g)$.\\
Now assume that (rc) holds.
Then
\begin{eqnarray*}
&&\hspace*{-2cm} \lan \varepsilon_t^*(g^*)\varepsilon_t^*(h^*),h\ran=
\lan \varepsilon_t^*(g^*), h_{(2)}\ran \lan \varepsilon_t^*(h^*), h_{(1)}\ran\\
&=&\lan \varepsilon_t^*(g^*), h_{(2)}\ran \lan h^*,\varepsilon_s(h_{(1)})\ran
\equal{\equref{1.1.6}}\lan \varepsilon_t^*(g^*), h1_{(2)}\ran \lan h^*, 1_{(1)}\ran\\
&\equal{(*)}& 
\lan \varepsilon, h1_{(3)}\ran \lan \varepsilon_t^*(g^*), 1_{(2)}\ran \lan h^*, 1_{(1)}\ran
=\lan \varepsilon, h1_{(2)}\ran \lan \varepsilon_t^*(g^*)h^*, 1_{(1)}\ran\\
&=& \lan \varepsilon_t^*(g^*)h^*, \varepsilon_s(h)\ran = \lan \varepsilon_t^*(\varepsilon_t^*(g^*)h^*),h\ran.
\end{eqnarray*}
$(*)$: we used that $\varepsilon_t^*(g^*)\in I_t^*$, see \equref{1.2.1}. The proof of all the other
assertions is similar.
\end{proof}

There are more distinguished subalgebras than those in the previous
lemma:
\begin{lemma}\lelabel{lem:more_algs}
Let $H$ be a prebialgebra.
\begin{eqnarray*}
{\rm (rc)}&\Longrightarrow&H_s~{\rm and}~H_t~{\rm are~subalgebras~of}~H\\
{\rm (lc)}&\Longrightarrow&\ol H_s~{\rm and}~\ol H_t~{\rm are~subalgebras~of}~ H\\
{\rm (rm)}&\Longrightarrow&H^*_s~{\rm and}~H^*_t~{\rm are~subalgebras~of}~H^* \\
{\rm (lm)}&\Longrightarrow&\ol H^*_s~{\rm and}~\ol H^*_t~{\rm are~subalgebras~of}~H^*
\end{eqnarray*}
\end{lemma}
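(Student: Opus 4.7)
The plan is to handle the four implications in parallel, each time combining a characterization from \leref{1.2} with a key identity of \leref{1.1}.

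For (rc), I would start from $H_t=I_t$ and $H_s=I_s$ in \equref{1.2.1}, so the task reduces to showing $I_t$ and $I_s$ are closed under multiplication. For $a,b\in I_t$, multiplicativity of $\Delta$ gives $\Delta(ab)=(1_{(1)}a\ot 1_{(2)})(1_{(1')}b\ot 1_{(2')})=1_{(1)}a1_{(1')}b\ot 1_{(2)}1_{(2')}$. The tautological identity $\Delta(a)=\Delta(a\cdot 1)=\Delta(a)\Delta(1)$, combined with $a\in I_t$, yields $1_{(1)}a1_{(1')}\ot 1_{(2)}1_{(2')}=1_{(1)}a\ot 1_{(2)}$; substituting collapses the earlier tensor to $1_{(1)}ab\ot 1_{(2)}$, so $ab\in I_t$. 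The $H_s$ case is the mirror image, using $\Delta(b)=\Delta(1)\Delta(b)$, and unitality is automatic since $1\in I_t\cap I_s$. The (lc) argument is verbatim after replacing $I_t,I_s$ by $\ol I_t,\ol I_s$.

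For (rm), \equref{1.2.3} provides $H_t^*=H_L^*=\im(g)$ and $H_s^*=H_R^*=\im(g')$, so every $\phi\in H_t^*$ has the form $g(h)(k)=\lan\varepsilon,kh\ran$ for some $h\in H$. For $\phi=g(h)$ and $\psi=g(h')$ the opposite convolution product is $(\phi\psi)(k)=\lan\varepsilon,k_{(2)}h\ran\lan\varepsilon,k_{(1)}h'\ran$. Under (rm), \equref{1.1.1} reads $k\varepsilon_t(h')=\lan\varepsilon,k_{(1)}h'\ran k_{(2)}$, so multiplying on the right by $h$ and applying $\varepsilon$ yields $(\phi\psi)(k)=\lan\varepsilon,k\varepsilon_t(h')h\ran=g(\varepsilon_t(h')h)(k)$, whence $\phi\psi\in H_L^*$. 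The $H_s^*$ case is parallel, using \equref{1.1.2} to obtain $g'(h)g'(h')=g'(h'\varepsilon_s(h))$, and the unit $\varepsilon=g(1)=g'(1)$ lies in both subspaces. The (lm) case follows by the same recipe with \equref{1.1.3} and \equref{1.1.4} replacing \equref{1.1.1} and \equref{1.1.2}.

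The main bookkeeping difficulty is keeping track of the primed copies of $\Delta(1)$ in the coalgebra cases; once this is married with the pattern $\Delta(x)\Delta(1)=\Delta(x)=\Delta(1)\Delta(x)$ and the characterizations of $I_t,I_s,\ol I_t,\ol I_s$ from \leref{1.2}, each of the eight verifications collapses to one application of multiplicativity of $\Delta$ followed by a single use of a \leref{1.1} identity.
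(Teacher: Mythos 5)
Your proof is correct, and its route differs from the paper's in an instructive way. For the (rc) and (lc) implications the paper computes $\varepsilon_t(h)\varepsilon_t(k)$ explicitly and exhibits it as $f\big((\varepsilon_t^*\circ g)(k)g(h)\big)\in H_L=H_t$, a several-line manipulation using \equref{1.1.6} and (rc); you instead invoke $H_t=I_t$ from \equref{1.2.1} and observe that $I_t$ (and likewise $I_s$, $\ol I_t$, $\ol I_s$) is closed under multiplication in \emph{any} prebialgebra, by applying the linear map $(-\cdot b)\ot{\rm id}$ to the identity $\Delta(a)\Delta(1)=\Delta(a)$ — a purely formal argument that is shorter and isolates a fact the paper never states. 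For the (rm) and (lm) implications the two arguments are closer in spirit: both land in $\im(g)$ resp.\ $\im(g')$ and then use \equref{1.2.3} or \equref{1.2.4}; but the paper parametrizes elements of $H_t^*$ as $\varepsilon_t^*(h^*)$ and needs a longer computation, whereas your parametrization as $g(h)$ reduces everything to one application of \equref{1.1.1} (and its siblings). Two small points of bookkeeping that you have essentially anticipated but should make explicit when writing this up: in the $\ol I_t$ case one must multiply by $\Delta(1)$ on the \emph{left} (i.e.\ use $\Delta(1)\Delta(b)=\Delta(b)$ rather than $\Delta(a)\Delta(1)=\Delta(a)$), and in the (lm) case the identity \equref{1.1.4} serves $\ol H_t^*$ while \equref{1.1.3} serves $\ol H_s^*$, so the pairing of identities is crossed relative to the (rm) case.
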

\begin{proof}
Assume first that (rc) holds. Then for any $h,k\in H$, 
\begin{eqnarray*}
&&\hspace*{-15mm}\varepsilon_t(h)\varepsilon_t(k)=
\lan \varepsilon,1_{(1)} k\ran\varepsilon_t(h)1_{(2)}
\equal{\equref{1.1.6}}
\lan\varepsilon,\varepsilon_s(\varepsilon_t(h)_{(1)})k\ran\varepsilon_t(h)_{(2)}\\
&\equal{\equref{1.2.1}}&\lan\varepsilon,\varepsilon_s(1_{(1)}\varepsilon_t(h))k\ran
1_{(2)}=
\lan \varepsilon, 1_{(1')}k\ran\lan \varepsilon,1_{(1'')}h\ran\lan\varepsilon,1_{(1)}1_{(2'')}1_{(2')}\ran1_{(2)}\\
&\equal{\rm{(rc)}}&\lan\varepsilon,1_{(1')}k\ran\lan\varepsilon,1_{(1)}h\ran\lan\varepsilon,1_{(2)}1_{(2')}\ran 1_{(3)}=
\lan\varepsilon,\varepsilon_s(1_{(2)})k\ran\lan\varepsilon,1_{(1)}h\ran1_{(3)}\\
&=&\lan(\varepsilon_t^*\circ g)(k),1_{(2)}\ran\lan g(h),1_{(1)}\ran 1_{(3)}=
f\big((\varepsilon_t^*\circ g)(k)g(h)\big).
\end{eqnarray*}
This is an element of $H_L$, hence by \equref{1.2.1} an element of
$H_t$. Assume next that (rm) holds. Then for any $h^*,k^*\in H^*$ and $h\in H$, 
\begin{eqnarray*}
&&\hspace*{-15mm}\lan \varepsilon^*_t(h^*)\varepsilon^*_t(k^*),h\ran=
\lan h^*,\varepsilon_s(h_{(2)})\ran\lan k^*,\varepsilon_s(h_{(1)})\ran\\
&=&\lan h^*,\varepsilon_s(h_{(2)})\ran\lan k^*,1_{(1)}\ran \lan
\varepsilon,h_{(1)}1_{(2)}\ran\equal{\equref{1.1.1}}
\lan h^*,\varepsilon_s(h\varepsilon_t(1_{(2)})\ran\lan k^*,1_{(1)}\ran\\
&=&\lan h^*,1_{(1')}\ran\lan\varepsilon,h\varepsilon_t(1_{(2)})1_{(2')}\ran\lan k^*,1_{(1)}\ran
=\lan\varepsilon,h\,(\varepsilon_t\circ f)(k^*)\,f(h^*)\ran\\
&=&\lan g\big((\varepsilon_t\circ f)(k^*)f(h^*)\big),h\ran.
\end{eqnarray*}
This shows that $\varepsilon^*_t(h^*)\varepsilon^*_t(k^*)$ is an element of
$H^*_L$, hence by \equref{1.2.3} an element of $H^*_t$. The rest of the proof is analogous.
\end{proof}

If (rm) or (rc) holds, then $\varepsilon_s$ and $\varepsilon_t$ are projections, hence
$H_s$ and $H_t$ are direct summands of $H$, and $H_s\ot H_t$ is a direct summand
of $H\ot H$; moreover $H_s\ot H_t=\im(\varepsilon_s\ot \varepsilon_t)$. In a similar
way, if (lm) or (lc) holds, then $\ol{H}_s\ot \ol{H}_t=\im(\ol{\varepsilon}_s\ot \ol{\varepsilon}_t)$.
If (rc) holds, then we easily show that $(\varepsilon_s\ot\varepsilon_t)(\Delta(1))=\Delta(1)$,
so that we have the following result.

\begin{lemma}\lelabel{1.5}
Let $H$ be a prebialgebra.
\begin{equation}\eqlabel{1.5.1}
{\rm (rc)}\Longrightarrow \Delta(1)\in H_s\ot H_t~~~;~~~
{\rm (lc)}\Longrightarrow \Delta(1)\in \ol{H}_s\ot \ol{H}_t.
\end{equation}
\end{lemma}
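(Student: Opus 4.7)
My plan is to exploit the fact, already flagged in the paragraph preceding the statement, that $\Delta(1)$ is a fixed point of the idempotent $\varepsilon_s\otimes \varepsilon_t$ (resp.\ $\ol\varepsilon_s\otimes\ol\varepsilon_t$). Once this is shown, membership in $H_s\otimes H_t$ (resp.\ $\ol H_s\otimes \ol H_t$) is immediate, because by \leref{1.3} the maps $\varepsilon_s,\varepsilon_t$ are idempotent under (rc) with respective images $H_s,H_t$, and analogously for the barred versions under (lc).

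For the first assertion, I would compute $(\varepsilon_s\otimes \varepsilon_t)(\Delta(1))=\varepsilon_s(1_{(1)})\otimes \varepsilon_t(1_{(2)})$ in two steps. First, apply $\varepsilon_s$ to the left tensorand using \equref{1.1.6} with $h=1$, which gives $\varepsilon_s(1_{(1)})\otimes 1_{(2)}=1_{(1)}\otimes 1_{(2)}=\Delta(1)$. Then apply $\mathrm{id}\otimes \varepsilon_t$ to both sides of this equality, obtaining
\[
\varepsilon_s(1_{(1)})\otimes \varepsilon_t(1_{(2)})=1_{(1)}\otimes \varepsilon_t(1_{(2)})=1_{(1)}\otimes 1_{(2)}=\Delta(1),
\]
where the second equality is \equref{1.1.5} with $h=1$. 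This shows $\Delta(1)=(\varepsilon_s\otimes \varepsilon_t)(\Delta(1))\in \im(\varepsilon_s)\otimes \im(\varepsilon_t)=H_s\otimes H_t$, as required.

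The (lc) case is entirely parallel: under (lc) one has the analogous identities \equref{1.1.7} and \equref{1.1.8} (again evaluated at $h=1$), so the same two-step computation shows
\[
(\ol\varepsilon_s\otimes \ol\varepsilon_t)(\Delta(1))=\ol\varepsilon_s(1_{(1)})\otimes \ol\varepsilon_t(1_{(2)})=1_{(1)}\otimes \ol\varepsilon_t(1_{(2)})=\Delta(1),
\]
and by \leref{1.3} the images of the idempotents $\ol\varepsilon_s$ and $\ol\varepsilon_t$ are $\ol H_s$ and $\ol H_t$, giving $\Delta(1)\in \ol H_s\otimes \ol H_t$.

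There is no real obstacle here; the whole content is the ``left-then-right'' replacement argument, which is legitimate because \equref{1.1.5}--\equref{1.1.8} are equalities of elements of $H\otimes H$, so one may post-compose with arbitrary linear maps on either tensorand. The only subtlety worth being explicit about is that idempotence of $\varepsilon_s,\varepsilon_t$ (and the barred versions) is what licenses identifying the image of $\varepsilon_s\otimes\varepsilon_t$ with $H_s\otimes H_t$; this is precisely the content of \leref{1.3} under the hypothesis (rc) (resp.\ (lc)).
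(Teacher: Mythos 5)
Your proof is correct and follows exactly the route the paper takes: the paragraph preceding \leref{1.5} reduces the claim to showing $(\varepsilon_s\ot\varepsilon_t)(\Delta(1))=\Delta(1)$ (using idempotence from \leref{1.3} to identify $\im(\varepsilon_s\ot\varepsilon_t)$ with $H_s\ot H_t$), and your two-step computation via \equref{1.1.6} and \equref{1.1.5} at $h=1$ is precisely the ``easily shown'' verification the paper leaves implicit.
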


The dual version of \leref{1.5} is slightly more involved. If (rm) or (rc) holds, then
$H_s^*$ and $H_t^*$ are direct summands of $H^*$, $H_s^*\ot H_t^*$ is a direct summand
of $H^*\ot H^*$, and $H_s^*\ot H_t^*=\im (\varepsilon_s^*\ot \varepsilon_t^*)
= \im(g'\circ f'\ot g\circ f)\subset \im(g'\ot g)$. The converse implication also holds
if (rm) is satisfied: take $g'(h)\ot g(k)\in \im(g'\ot g)$. By \equref{1.2.3}, there exist
$h^*,k^*\in H^*$ such that $g'(h)=g'(f'(h^*))$ and $g(k)=g(f(k^*))$. Then
$g'(h)\ot g(k)=g'(f'(h^*))\ot g(f(k^*))\in  \im(g'\circ f'\ot g\circ f)$.\\
\leref{locproj} is probably folklore, we include an elementary proof for the sake of
completeness.

\begin{lemma}\lelabel{locproj}
If $H^*$ is locally projective as a $k$-module (this is automatically
satisfied in the cases where $k$ is a field or $H$ is finite),
then the map  
$$\iota:\ H^*\ot H^*\to (H\ot H)^*,~~\lan \iota(h^*\ot k^*),k\ot h\ran =\lan h^*,h\ran
\lan k^*,k\ran$$
is injective.
\end{lemma}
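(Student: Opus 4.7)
The plan is to use the defining property of local projectivity for $H^*$ directly, without invoking the general $\alpha$-condition. Suppose $x=\sum_{i=1}^{N} h_i^* \ot k_i^* \in H^*\ot H^*$ satisfies $\iota(x)=0$, that is, $\sum_{i} \lan h_i^*,h\ran \lan k_i^*,k\ran = 0$ for all $h,k\in H$.

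The first step I would take is to fix $k\in H$ and observe that the vanishing condition can be read as saying that the element $\sum_{i} \lan k_i^*, k\ran h_i^* \in H^*$ is the zero functional on $H$, and hence is $0$ in $H^*$. Thus, for every $k\in H$,
\begin{equation*}
\sum_{i=1}^{N} \lan k_i^*,k\ran h_i^* = 0 \quad \text{in } H^*.
\end{equation*}

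Next I would invoke local projectivity of $H^*$ applied to the finite subset $\{h_1^*,\ldots,h_N^*\}\subset H^*$: there exist $\{\phi_1,\ldots,\phi_m\}\subset H^{**}$ and $\{e_1,\ldots,e_m\}\subset H^*$ such that $h_i^* = \sum_{j=1}^{m} \lan \phi_j,h_i^*\ran\, e_j$ for every $i$. Using this dual system to rewrite $x$, one gets
\begin{equation*}
x = \sum_{i=1}^{N}\sum_{j=1}^{m}\lan \phi_j,h_i^*\ran\, e_j \ot k_i^* = \sum_{j=1}^{m} e_j \ot \Big(\sum_{i=1}^{N} \lan \phi_j,h_i^*\ran k_i^*\Big).
\end{equation*}

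To finish, I would apply each $\phi_j\in H^{**}$ to the identity obtained in the first step, yielding $\sum_{i} \lan k_i^*,k\ran \lan \phi_j,h_i^*\ran = 0$ for every $k\in H$. This shows that the functional $\sum_{i} \lan \phi_j,h_i^*\ran k_i^*$ on $H$ is identically zero, i.e.\ $\sum_{i} \lan \phi_j,h_i^*\ran k_i^* = 0$ in $H^*$ for each $j$. Substituting back gives $x=0$, proving injectivity of $\iota$. The only delicate point is recognising the correct ordering: one must partially evaluate against $k\in H$ in the second factor \emph{first}, and then apply the dual basis for the first factor; this sidesteps the fact that the canonical map $H\to H^{**}$ need not be injective and avoids any need for double-dual reflexivity.
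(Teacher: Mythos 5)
Your argument is correct and is essentially the same as the paper's: both fix the second tensor factor to conclude $\sum_i\lan k_i^*,k\ran h_i^*=0$ in $H^*$, then apply the local-projectivity dual system $\{\phi_j,e_j\}$ to the first factors and evaluate the $\phi_j$ on that identity to kill the coefficients of $x=\sum_j e_j\ot\bigl(\sum_i\lan\phi_j,h_i^*\ran k_i^*\bigr)$. No substantive difference.
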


\begin{proof}
Assume that $\iota(\sum_{\alpha} h_\alpha^*\ot k_\alpha^*)=0$. Since $H^*$ is
locally projective, there exist $f_i\in H^{**}$ and $k_i\in H^*$ such that
$h_\alpha^*=\sum_i f_i(h_\alpha^*) k_i$, for each $\alpha$. For all $h,k\in H$, we have
that
$\sum_\alpha \lan k_\alpha^*,k\ran \lan h_\alpha^*,h\ran =0$,
hence we have for all $k\in H$ that
$\sum_\alpha \lan k_\alpha^*,k\ran  h_\alpha^*=0$.
Then it follows that we have for every index $i$ and for every $k\in H$ that
$$\sum_\alpha f_i(h_\alpha^*)\lan k_\alpha^*,k\ran= f_i\bigl(
\sum_\alpha \lan k_\alpha^*,k\ran  h_\alpha^*\bigr)=0.$$
Therefore $\sum_\alpha f_i(h_\alpha^*) k_\alpha^*=0$
and
$$\sum_\alpha h_\alpha^*\ot k_\alpha^*= \sum_ik_i\ot \Bigl(\sum_\alpha f_i(h_\alpha^*) k_\alpha^*\Bigr)
=0.$$
\end{proof}

Assume that $H^*$ is locally projective, and let
$$H^o=\{h^*\in H^*~|~m^*(h^*)=h^*\circ m\in \im(\iota)\},$$
where $m$ is the multiplication in $H$.
For $h^*\in H^o$, we write $\iota^{-1}(m^*(h^*))=\Delta(h^*)=
h^*_{(1)}\ot h^*_{(2)}$. Then $\Delta$ is coassociative, see 
for example \cite[Prop. 1.5.3]{DascalescuNR} (at least in the case where $k$ is
a field). It is easy to show that $H^o$ is closed under opposite convolution multiplication.

\begin{proposition}\prlabel{1.6}
Let $H$ be a prebialgebra, and assume that $H^*$ is locally projective as
a $k$-module. If (rm) holds, then $\varepsilon\in H^o$, 
$\Delta(\varepsilon)\in H^*_s\ot H^*_t$ and $H^o$ is a prebialgebra satisfying
(rc). $H_s^*$ and $H_t^*$ are subspaces of $H^o$, and $\varepsilon_s^*$ and
$\varepsilon_t^*$ restrict to projections from $H^o$ onto $H_s^*$ and $H_t^*$.\\
If (rc) holds, then $\varepsilon\in H^o$, 
$\Delta(\varepsilon)\in \ol{H}^*_s\ot \ol{H}^*_t$ and $H^o$ is a weak prebialgebra satisfying
(lc). $\ol{H}_s^*$ and $\ol{H}_t^*$ are subspaces of $H^o$, and $\ol{\varepsilon}_s^*$ and
$\ol{\varepsilon}_t^*$ restrict to projections from $H^o$ onto $\ol{H}_s^*$ and $\ol{H}_t^*$.\\
If $H$ is a weak bialgebra, then $H^o$ is also a weak bialgebra.
\end{proposition}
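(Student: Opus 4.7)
The plan is to prove the (rm) case in full detail and argue that the (rc) case is handled by a parallel argument; the weak bialgebra case then follows by combining both.

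Assume (rm). Applying $\varepsilon$ to both sides of \equref{1.1.2} and using $\varepsilon\circ\varepsilon_s=\varepsilon$ together with the coalgebra counit yields $\varepsilon(gh)=\varepsilon(\varepsilon_s(g)h)$; expanding $\varepsilon_s(g)=\varepsilon(g1_{(2)})1_{(1)}$ gives
\[
\varepsilon(gh)=\varepsilon(g1_{(2)})\varepsilon(1_{(1)}h).
\]
The right-hand side is $\iota\bigl(g'(1_{(1)})\ot g(1_{(2)})\bigr)$ evaluated at $g\ot h$, so $\varepsilon\in H^o$ with $\Delta(\varepsilon)=g'(1_{(1)})\ot g(1_{(2)})$. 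Under (rm), \equref{1.2.3} gives $g'(H)=H_R^*=H_s^*$ and $g(H)=H_L^*=H_t^*$, hence $\Delta(\varepsilon)\in H_s^*\ot H_t^*$.

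Equip $H^o$ with the opposite convolution restricted from $H^*$, the unit $\varepsilon$ (just shown to lie in $H^o$), the counit $h^*\mapsto\langle h^*,1\rangle$, and the comultiplication $\Delta=\iota^{-1}\circ m^*$. Closure under convolution was stated before the proposition; coassociativity is formal from associativity of $m_H$ and injectivity of the iterated embedding $(H^*)^{\ot 3}\hookrightarrow(H^{\ot 3})^*$; multiplicativity of $\Delta_{H^o}$ is immediate. To verify (rc) for $H^o$, test $\Delta^2(\varepsilon)=\varepsilon_{(1)}\ot\varepsilon_{(1')}\varepsilon_{(2)}\ot\varepsilon_{(2')}$ on a triple $h\ot k\ot\ell$: using the explicit $\Delta(\varepsilon)$ and opposite convolution, the right-hand side unfolds to $\sum\varepsilon(hk_{(2)})\varepsilon(k_{(1)}\ell)$, which equals $\varepsilon(hk\ell)$ by (rm) on $H$, matching the left-hand side.

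For the subspace claims, $H_t^*=I_t^*$ by \equref{1.2.3}, so any $h^*\in H_t^*$ satisfies $\langle h^*,k\ell\rangle=\langle\varepsilon,k\ell_{(2)}\rangle\langle h^*,\ell_{(1)}\rangle$; substituting the factorization $\langle\varepsilon,k\ell_{(2)}\rangle=\varepsilon(k1_{(2)})\varepsilon(1_{(1)}\ell_{(2)})$ from the first step realizes $h^*\circ m$ as $\iota$ of a finite sum in $H^*\ot H^*$, giving $H_t^*\subseteq H^o$; the inclusion $H_s^*\subseteq H^o$ is symmetric. Since $\varepsilon_t^*$ and $\varepsilon_s^*$ are idempotents on $H^*$ (\leref{1.3}) with images $H_t^*$ and $H_s^*$ lying in $H^o$, their restrictions to $H^o$ are the required projections.

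The (rc) case proceeds along the same lines, using \equref{1.1.7}--\equref{1.1.8} in place of \equref{1.1.2} and the barred maps $\ol{\varepsilon}_t,\ol{\varepsilon}_s$ in place of the unbarred ones; the resulting factorization of $\varepsilon\circ m$ places $\Delta(\varepsilon)$ in $\ol H_s^*\ot\ol H_t^*$, and the analogous unfolding verifies (lc) for $H^o$. The projection and inclusion claims for $\ol H_t^*,\ol H_s^*$ go through in the same manner using \leref{lem:more_algs}. The weak bialgebra statement is the conjunction of the two: when all four axioms hold on $H$, both parts apply, giving $H^o$ all four axioms.

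The most delicate point is verifying closure of $H^o$ under $\Delta$, so that identities like (rc) make sense inside $H^o\ot H^o\ot H^o$; this amounts to showing that $\Delta(h^*)$ has components in $H^o$ for each $h^*\in H^o$, and follows by combining the subalgebra structure of $H_t^*$ and $H_s^*$ (\leref{1.4}, \leref{lem:more_algs}) with the inclusions just established.
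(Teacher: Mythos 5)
Your treatment of the (rm) case is essentially the paper's argument: you obtain the same factorization $\varepsilon(gh)=\varepsilon(g1_{(2)})\varepsilon(1_{(1)}h)$, hence $\Delta(\varepsilon)=g'(1_{(1)})\ot g(1_{(2)})\in H_s^*\ot H_t^*$, and you establish $H_s^*,H_t^*\subseteq H^o$ via the $I^*$-descriptions from \equref{1.2.3}, exactly as the paper does. The one methodological difference is your verification of (rc) for $H^o$: the paper computes $\Delta^2(\varepsilon)$ directly inside $(H^*)^{\ot 3}$ using $\Delta(\psi)=\varepsilon_{(1)}\ot\psi\varepsilon_{(2)}$ for $\psi\in I_s^*$, whereas you evaluate both sides on $H^{\ot 3}$ and appeal to injectivity of $(H^*)^{\ot 3}\to (H^{\ot 3})^*$. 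That is legitimate, but the injectivity of the triple map is not \leref{locproj} itself; it needs a short additional argument (iterate the proof of \leref{locproj}: local projectivity of $H^*$ lets you peel off the first tensorand and reduce to the injectivity of $\iota$). Your closing remark about closure of $H^o$ under $\Delta$ for \emph{general} elements is also not actually settled by the subalgebra facts you cite, but the paper likewise defers this to the literature, so I won't press it.

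The genuine gap is in your dispatch of the second paragraph of the statement. You propose to run the same argument "using \equref{1.1.7}--\equref{1.1.8} in place of \equref{1.1.2}", but \equref{1.1.7} and \equref{1.1.8} are \emph{equivalent to (lc)}, which is not among the hypotheses of that paragraph; the identities actually available under (rc) are \equref{1.1.5}--\equref{1.1.6}, and these concern $\Delta(1)$ and $\Delta$ applied to elements — they do not produce a factorization of $\varepsilon\circ m$ through $\iota$. Indeed, your own evaluation method exposes the obstruction: testing the (lc) identity for $H^o$ on $h\ot k\ot\ell$ unwinds precisely to $\varepsilon(hk\ell)=\varepsilon(hk_{(1)})\varepsilon(k_{(2)}\ell)$, i.e.\ to condition (lm) for $H$ (compare \leref{00}), so no argument "along the same lines" can extract it from (rc) alone; the true parallel of the first paragraph is the implication ${\rm (lm)}\Rightarrow\varepsilon(hk)=\varepsilon(h1_{(1)})\varepsilon(1_{(2)}k)$, giving $\Delta(\varepsilon)=g'(1_{(2)})\ot g(1_{(1)})\in\ol H_s^*\ot\ol H_t^*$ and (lc) for $H^o$ via \equref{1.1.3}--\equref{1.1.4} and \equref{1.2.4}. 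Be aware that the paper's own proof stops after the (rm) case and offers nothing for the second paragraph either, so you are not contradicting the printed argument — but your sketch of how to fill that part in would fail as written, and you should rework it starting from a multiplicativity-of-$\varepsilon$ axiom rather than from a comultiplication-of-unit axiom.
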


\begin{proof} 
If (rm) holds, then
$\lan \varepsilon, hk\ran =\lan \varepsilon, h1_{(2)})\ran\lan \varepsilon,1_{(1)}1_{(2')}\ran
\lan \varepsilon,1_{(1')}k\ran$,
hence
$$
\Delta(\varepsilon)= \lan \varepsilon,1_{(1)}1_{(2')}\ran
g'(1_{(1 ')})\ot g(1_{(2)}) \in \im(g'\ot g)= H_s^*\ot H_t^*.$$
Take $\psi\in H_s^*=I_s^*$ (see \equref{1.2.3}). Then 
$$\lan \psi,kl\ran =\lan \varepsilon,k_{(1)}l\ran\lan\psi,k_{(2)}\ran=
\lan \varepsilon_{(1)},l\ran \lan\psi\varepsilon_{(2)},k\ran,$$
hence
$\Delta(\psi)=\varepsilon_{(1)}\ot\psi\varepsilon_{(2)}$ and $\psi\in H^o$. Then we compute that
$(\Delta\ot H^o)(\varepsilon_{(1)}\ot \varepsilon_{(2)})= \varepsilon_{(1')}\ot
\varepsilon_{(1)}\varepsilon_{(2')}\ot \varepsilon_{(2)}$, which proves that $H^o$
satisfies (rc).
\end{proof}

\begin{lemma}\lelabel{1.7}
If (rm) or (rc) holds, then we have isomorphisms
$$\xymatrix{H_t\ar[r]<3pt>^{g}&H_t^*\ar[l]<3pt>^{f}}~~{\rm and}~~
\xymatrix{H_s\ar[r]<3pt>^{g'}&H_s^*\ar[l]<3pt>^{f'}}.$$
If (lm) or (lc) holds, then we have isomorphisms
$$\xymatrix{\ol{H}_t\ar[r]<3pt>^{g'}&\ol{H}_s^*\ar[l]<3pt>^{f}}~~{\rm and}~~
\xymatrix{\ol{H}_s\ar[r]<3pt>^{g}&\ol{H}_t^*\ar[l]<3pt>^{f'}}.$$
\end{lemma}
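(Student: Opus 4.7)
The plan is to reduce the statement to the idempotency results of \leref{1.3} together with a simple general fact: if $\phi\colon X\to Y$ and $\psi\colon Y\to X$ are $k$-linear maps such that both compositions $\psi\phi$ and $\phi\psi$ are idempotent, then $\phi$ and $\psi$ restrict to mutually inverse isomorphisms $\im(\phi\psi)\leftrightarrow\im(\psi\phi)$. Indeed, if $y=\phi\psi(y')\in\im(\phi\psi)$, then $\psi(y)=\psi\phi\psi(y')\in\im(\psi\phi)$, so $\psi$ sends $\im(\phi\psi)$ into $\im(\psi\phi)$ and symmetrically for $\phi$; and on $\im(\phi\psi)$ one has $\phi\psi(y)=\phi\psi\phi\psi(y')=\phi\psi(y')=y$ by idempotency, with the analogous computation on $\im(\psi\phi)$.

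First I would apply this general fact with $(\phi,\psi)=(f,g)$. Recall from \equref{1.1.10} and \equref{1.1.12} that $\varepsilon_t=f\circ g$ and $\varepsilon_t^*=g\circ f$, so that $H_t=\im(fg)$ and $H_t^*=\im(gf)$ by definition. Under (rm) or (rc), \leref{1.3} gives that both $\varepsilon_t$ and $\varepsilon_t^*$ are idempotent, and the general observation above immediately yields the first isomorphism $H_t\cong H_t^*$ via $g$ and $f$. Taking $(\phi,\psi)=(f',g')$ and using $\varepsilon_s=f'\circ g'$, $\varepsilon_s^*=g'\circ f'$, together with their idempotency from \leref{1.3}, gives the second isomorphism $H_s\cong H_s^*$.

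Next, for the case where (lm) or (lc) holds, I would use the same principle with the barred maps. Here $\ol\varepsilon_t=f\circ g'$ and $\ol\varepsilon_s^*=g'\circ f$, so $\ol H_t=\im(f g')$ and $\ol H_s^*=\im(g' f)$, and \leref{1.3} gives the idempotency of both. Applying the general fact to $(\phi,\psi)=(f,g')$ produces the mutually inverse isomorphisms $g'\colon\ol H_t\to\ol H_s^*$ and $f\colon\ol H_s^*\to\ol H_t$. Symmetrically, $\ol\varepsilon_s=f'\circ g$ and $\ol\varepsilon_t^*=g\circ f'$ are idempotent under (lm) or (lc), giving $\ol H_s\cong\ol H_t^*$ via $g$ and $f'$.

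There is no real obstacle here: once one notices that each of the four isomorphism claims amounts to a pair of composable maps whose two compositions are precisely the idempotents already listed in \equref{1.1.10} and \equref{1.1.12}, the result reduces to \leref{1.3}. The only thing to double-check is the matching of the correct pair $(f,g)$, $(f',g')$, $(f,g')$, $(f',g)$ with the corresponding $\varepsilon$-type projection, as the barred projections mix the unprimed and primed variants and this is what produces the crossing of $t$ and $s$ labels on the dual side.
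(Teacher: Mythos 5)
Your proof is correct and takes essentially the same route as the paper: the paper's argument for the pair $(f,g)$ is precisely your general observation instantiated there, using \leref{1.3} for the idempotency of $\varepsilon_t=f\circ g$ and $\varepsilon_t^*=g\circ f$ to get the restriction/corestriction and the mutual inversion on the images, with the other three pairings treated analogously. Your only addition is to state the linear-algebra fact abstractly, which correctly handles the bookkeeping of matching $(f,g)$, $(f',g')$, $(f,g')$, $(f',g)$ with the corresponding idempotents from \equref{1.1.10} and \equref{1.1.12}.
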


\begin{proof}
Assume that (rm) or (rc) holds. We first show that $g$ restricts and corestricts to
$g:\ H_t\to H_{ t}^*$. Take $z\in H_t$. Since $f\circ
g=\varepsilon_t$ is a projection, we have that $g(z)=(g\circ f\circ
g)(z)=\varepsilon_t^*(g(z))\in H_t^*$.\\ 
In a similar way, $f$ restricts and corestricts to $f:\ H_t^*\to H_t$: for $\varphi\in H_t^*$,
we have $f(\varphi)=(f\circ g\circ f)(\varphi)=\varepsilon_t(f(\varphi))\in H_t$.\\
Finally, for $z\in H_t$ and $\varphi\in H_t^*$, we have $(f\circ g)(z)=\varepsilon_t(z)=z$
and $(g\circ f)(\varphi)=\varepsilon_t^*(\varphi)=\varphi$.
\end{proof}

\begin{lemma}\lelabel{1.8}
If (rm) or (rc) holds, then we have isomorphisms
$H_s^*\cong (H_t)^*$ and $H_t^*\cong (H_s)^*$. Moreover $H_s$ and $H_t$ are
finite with finite dual basis $1_{(1)}\ot g(1_{(2)})$ for
$H_s$ and $1_{(2)}\ot g'(1_{(1)})$ for $H_t$. 
\\
If (lm) or (lc) holds, then
$\ol{H}_t^*\cong (\ol{H}_t)^*$ and $\ol{H}_s^*\cong (\ol{H}_s)^*$.
$\ol{H}_s$ and $\ol{H}_t$ are finite, with finite dual basis $1_{(1)}\ot g'(1_{(2)})$ for
$\ol{H}_s$ and $1_{(2)}\ot g(1_{(1)})$ for $\ol{H}_t$. 
\end{lemma}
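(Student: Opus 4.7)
The plan has two symmetric halves, one for the hypothesis (rm) or (rc) and one for (lm) or (lc); I focus on the first.

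For the dual basis claim, I would use that by \leref{1.3}, $\varepsilon_s$ is idempotent, so every $h\in H_s=\im(\varepsilon_s)$ satisfies $h=\varepsilon_s(h)$. Substituting the explicit formula \equref{1.1.13} gives
$$h=\lan\varepsilon,h1_{(2)}\ran 1_{(1)}=\lan g(1_{(2)}),h\ran 1_{(1)},$$
which exhibits $1_{(1)}\ot g(1_{(2)})$ as a finite dual basis for $H_s$; in particular $H_s$ is finitely generated and projective as a $k$-module. An entirely parallel argument using $\varepsilon_t$ and $g'$ in \equref{1.1.13} yields the dual basis $1_{(2)}\ot g'(1_{(1)})$ for $H_t$.

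For the isomorphism $H_s^*\cong (H_t)^*$, the key observation is the identity
$$\lan\varepsilon_s^*(h^*),h\ran=\lan h^*,\varepsilon_t(h)\ran,$$
which is immediate from the definitions $\varepsilon_s^*=g'\circ f'$ and $\varepsilon_t=f\circ g$ together with the descriptions of $f,f',g,g'$. This lets me define $\psi:H_s^*\to (H_t)^*$ as restriction $h^*\mapsto h^*|_{H_t}$, and $\psi^{-1}:(H_t)^*\to H_s^*$ by $\psi^{-1}(\phi)(h)=\phi(\varepsilon_t(h))$. The identity above, combined with the idempotency of $\varepsilon_t$, shows that $\psi^{-1}(\phi)$ is fixed by $\varepsilon_s^*$ and hence lies in $H_s^*$; the inverse relations follow routinely from $\varepsilon_t|_{H_t}=\Id_{H_t}$ and from rewriting $h^*(h)=\lan\varepsilon_s^*(h^*),h\ran=\lan h^*,\varepsilon_t(h)\ran$ for $h^*\in H_s^*$. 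The twin isomorphism $H_t^*\cong(H_s)^*$ is established in the mirror way from $\lan\varepsilon_t^*(h^*),h\ran=\lan h^*,\varepsilon_s(h)\ran$.

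The (lm)/(lc) half proceeds identically, with \equref{1.1.14} replacing \equref{1.1.13} and the barred maps replacing the unbarred ones. One minor bookkeeping difference, not a real obstacle, is that by the remark after \equref{1.1.12}, $\ol\varepsilon_t$ and $\ol\varepsilon_s$ are \emph{self-dual}: $(\ol\varepsilon_t)^*=\ol\varepsilon_t^*$ and $(\ol\varepsilon_s)^*=\ol\varepsilon_s^*$. This is precisely why the lemma asserts $\ol H_t^*\cong(\ol H_t)^*$ and $\ol H_s^*\cong(\ol H_s)^*$ (same index on both sides), rather than the cross-duality seen in the unbarred case. Once this matching of maps is respected, the construction of the isomorphisms and the verification of the dual bases go through verbatim. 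The only thing one must be careful about is not to confuse which of the four counital maps pairs with which under the transpose; everything else is formal.
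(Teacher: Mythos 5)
Your construction of the isomorphism $H_s^*\cong (H_t)^*$ is correct and is essentially the paper's own argument: the paper also uses the mutually inverse maps $\varphi\mapsto\varphi_{|H_t}$ and $z^*\mapsto z^*\circ\varepsilon_t$, justifying well-definedness by identifying $H_s^*$ with the functionals vanishing on $\Ker\varepsilon_t$, whereas you invoke the transpose identity $\lan\varepsilon_s^*(h^*),h\ran=\lan h^*,\varepsilon_t(h)\ran$ directly; both routes work. Your observation that the self-duality $(\ol\varepsilon_t)^*=\ol\varepsilon_t^*$, $(\ol\varepsilon_s)^*=\ol\varepsilon_s^*$ explains the matching of indices in the barred case is also accurate.

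There is, however, a genuine gap in the dual-basis half. The identity $h=\lan g(1_{(2)}),h\ran 1_{(1)}$ for $h\in H_s$ is only the \emph{evaluation} half of the dual basis property. To conclude that $H_s$ is finitely generated projective with the stated dual basis, you must also show that $1_{(1)}\ot g(1_{(2)})$ lies in $H_s\ot (H_s)^*$, i.e.\ that the first tensor legs can be taken inside $H_s$ (this is part of the paper's own definition of a dual basis, and is needed later, e.g.\ in \prref{1.12}). A priori $1_{(1)}\in H$ only, so the map $k^n\to H$ determined by the first legs of $\Delta(1)$ need not have image in $H_s$, and the evaluation identity alone does not split $H_s$ off a finite free module. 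Under (rc) the required membership is exactly \leref{1.5}, $\Delta(1)\in H_s\ot H_t$; under (rm) it is not automatic, and it is precisely what the second half of the paper's proof establishes, by showing the relevant tensor is fixed by the appropriate projection using \leref{lem:eps_1}, \equref{1.1.2}, \equref{1.2.3} and \leref{1.3}. Your argument can be repaired along the same lines: writing $h=\varepsilon_s(\varepsilon_s(h))=\lan g(1_{(2)}),h\ran\,\varepsilon_s(1_{(1)})$ provides a retraction onto $H_s$, and then
$$\varepsilon_s(1_{(1)})\ot g(1_{(2)})=1_{(1)}\ot g(\varepsilon_t(1_{(2)}))=1_{(1)}\ot g(1_{(2)}),$$
the first equality by \leref{lem:eps_1} and the second by \equref{1.9.2} under (rm) (under (rc) one uses \leref{1.5} instead). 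This step, and its three analogues for $H_t$, $\ol H_s$ and $\ol H_t$, must be supplied.
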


\begin{proof}
Recall that $H_s^*=\im(\varepsilon_s^*)$ and $(H_t)^*$ is the dual of $H_t$.
If (rm) or (rc) holds, then $\varepsilon_t$ is a projection, and $H=H_t\oplus K_t$,
with $K_t=\Ker(\varepsilon_t)$. Take $h^*\in H_s^*$. Then $h^*=\varepsilon_s^*(h^*)=
h^*\circ \varepsilon_t$, hence $h^*(K_t)=0$.
Conversely, if $h^*(K_t)=0$, then
for all $h\in H$, we have $\lan h^*,h\ran =\lan h^*,\varepsilon_t(h)\ran +
\lan h^*,h-\varepsilon_t(h)\ran= \lan h^*,\varepsilon_t(h)\ran=
\lan \varepsilon_s^*(h^*),h\ran$, hence $h^*=\varepsilon_s^*(h^*)\in H_s^*$. We conclude
that $h^*\in H_s^*$ if and only if $h^*(K_t)=0$.
Now define
$\alpha:\ (H_t)^*\to H_s^*$, $\alpha(z^*)=z^*\circ \varepsilon_t$, and
$\beta:\ H_s^*\to (H_t)^*$, $\beta(\varphi)=\varphi_{|H_t}$. $\alpha$ and $\beta$ are
well-defined and inverse to each other.\\
If (rc) holds, then $1_{(1)}\ot 1_{(2)}\in H_s\ot H_t$, by \leref{1.5}, and then
$1_{(2)}\ot g'(1_{(1)})\in H_t\ot H_s^*\cong H_t\ot (H_t)^*$, by \leref{1.7}.
If (rm) holds, then $\varepsilon_t(1_{(2)})\ot g'(1_{(1)})\equal{\equref{eq:eps_1}}
1_{(2)}\ot\lan\varepsilon,\varepsilon_s(1_{(1)}) - \ran\equal{\equref{1.1.2}}
1_{(2)}\ot g'(1_{(1)})$. Using the fact that $H_R^*=H_s^*$, see \equref{1.2.3},
and that $\varepsilon_t$ and $\varepsilon_s^*$ are projections onto $H_t$
and $H_s^*$, by \leref{1.3}, we now easily obtain $(\varepsilon_t\ot
\varepsilon^*_s)(1_{(2)}\ot g'(1_{(1)})) = 1_{(2)}\ot g'(1_{(1)})$, hence
$1_{(2)} \ot g'(1_{(1)})\in \Ker({\rm id}-\varepsilon_t\ot \varepsilon^*_s)
=H_t\ot H_s^*$. 
Finally, we have for all $z\in H_t$ that $1_{(2)}\lan g'(1_{(1)}),z\ran=
1_{(2)}\lan \varepsilon,1_{(1)}z\ran=\varepsilon_t(z)=z$.
\end{proof}

\begin{lemma}\lelabel{1.9}
Let $H$ be a prebialgebra.
\begin{eqnarray}
{\rm (lm)}&\Longrightarrow& g\circ \ol{\varepsilon}_s=g,~g'\circ \ol{\varepsilon}_t=g',~
\varepsilon_t\circ \ol{\varepsilon}_s=\varepsilon_t~~{\rm and}~~
\varepsilon_s\circ \ol{\varepsilon}_t=\varepsilon_s;\eqlabel{1.9.1}\\
{\rm (rm)}&\Longrightarrow& g' \circ {\varepsilon}_s=g',~g \circ {\varepsilon}_t=g,~
\ol{\varepsilon}_t\circ {\varepsilon}_s=\ol{\varepsilon}_t~~{\rm and}~~
\ol{\varepsilon}_s\circ {\varepsilon}_t=\ol{\varepsilon}_s;\eqlabel{1.9.2}\\
{\rm (lc)}&\Longrightarrow& f\circ \ol{\varepsilon}^*_s=f,~
f'\circ \ol{\varepsilon}^*_t=f',~
\varepsilon^*_t\circ \ol{\varepsilon}^*_s=\varepsilon^*_t~~{\rm and}~~
\varepsilon^*_s\circ \ol{\varepsilon}^*_t=\varepsilon^*_s;\\
{\rm (rc)}&\Longrightarrow& f'\circ {\varepsilon}^*_s=f',~
f\circ {\varepsilon}^*_t=f,~
\ol{\varepsilon}^*_t\circ {\varepsilon}^*_s=\ol{\varepsilon}^*_t~~{\rm and}~~
\ol{\varepsilon}^*_s\circ {\varepsilon}^*_t=\ol{\varepsilon}^*_s.
\end{eqnarray}
\end{lemma}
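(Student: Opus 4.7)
The sixteen identities split into four blocks of four, one per axiom. Within each block, the last two identities (those built from two $\varepsilon$-type maps) follow formally from the first two by inserting the factorizations \equref{1.1.10} or \equref{1.1.12}. For instance, under (lm),
$\varepsilon_t\circ\ol{\varepsilon}_s=f\circ g\circ\ol{\varepsilon}_s=f\circ g=\varepsilon_t$
as soon as $g\circ\ol{\varepsilon}_s=g$ is known, and the seven remaining reductions are entirely analogous (under (rm), $\ol{\varepsilon}_t\circ\varepsilon_s=f\circ g'\circ\varepsilon_s=f\circ g'=\ol{\varepsilon}_t$, etc.; under (lc), $\varepsilon^*_t\circ\ol{\varepsilon}^*_s=g\circ f\circ\ol{\varepsilon}^*_s=g\circ f=\varepsilon^*_t$, etc.). Thus only eight ``basic'' identities -- two per block -- require a direct argument, and I would organize the proof around this observation.

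\textbf{The module-side blocks.} For (lm) and (rm) I would verify the basic identities by evaluating at a generic $k\in H$ and inserting the explicit formulas \equref{1.1.13}--\equref{1.1.14}. Under (lm),
\[
\lan g(\ol{\varepsilon}_s(h)),k\ran=\lan\varepsilon,k 1_{(1)}\ran\lan\varepsilon,1_{(2)}h\ran=\lan\varepsilon,kh\ran=\lan g(h),k\ran,
\]
the middle equality being (lm) applied to the triple $(k,1,h)$. The companion identity $g'\circ\ol{\varepsilon}_t=g'$ is symmetric. The (rm) block is handled identically using (rm) applied to $(h,1,k)$ and $(k,1,h)$, which gives $\lan\varepsilon,hk\ran=\lan\varepsilon,h1_{(2)}\ran\lan\varepsilon,1_{(1)}k\ran$.

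\textbf{The comodule-side blocks.} For (lc) I would first use \equref{1.1.12} to rewrite the two basic identities as $f\circ\ol{\varepsilon}^*_s=f\circ g'\circ f=\ol{\varepsilon}_t\circ f$ and $f'\circ\ol{\varepsilon}^*_t=\ol{\varepsilon}_s\circ f'$, so it suffices to show $\ol{\varepsilon}_t\circ f=f$ and $\ol{\varepsilon}_s\circ f'=f'$. These drop out of \equref{1.1.7} and \equref{1.1.8} specialized at $h=1$: pairing $h^*\in H^*$ with the first tensorand of $1_{(1)}\ot\ol{\varepsilon}_t(1_{(2)})=1_{(1)}\ot 1_{(2)}$ yields $\ol{\varepsilon}_t(f(h^*))=f(h^*)$ by linearity of $\ol{\varepsilon}_t$, and the other is symmetric. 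The (rc) block is identical, using \equref{1.1.5} and \equref{1.1.6} at $h=1$ to obtain $\varepsilon_t\circ f=f$ and $\varepsilon_s\circ f'=f'$. There is no real obstacle: the proof is the eight one-line computations just outlined together with the uniform reduction of the first paragraph; the only care required is keeping track of which half of \equref{1.1.10}/\equref{1.1.12} and which pair of identities from \leref{1.1} belongs with each basic identity.
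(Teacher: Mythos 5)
Your proposal is correct and follows essentially the same route as the paper: the paper also proves the basic identity $g\circ\ol{\varepsilon}_s=g$ by applying $\varepsilon$ to \equref{1.1.4} (equivalent to your direct use of (lm) with middle element $1$), deduces $\varepsilon_t\circ\ol{\varepsilon}_s=f\circ g\circ\ol{\varepsilon}_s=f\circ g=\varepsilon_t$ from the factorizations, and dismisses the remaining cases as similar. Your write-up merely makes explicit the "similar" cases, including the correct use of \equref{1.1.5}--\equref{1.1.8} at $h=1$ for the comodule-side blocks.
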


\begin{proof}
If (lm) holds, then  applying $\varepsilon$ to both sides of \equref{1.1.4} we obtain 
$g= g\circ \ol{\varepsilon}_s$. 
Then it follows that $\varepsilon_t\circ \ol{\varepsilon}_s= 
f\circ g\circ \ol{\varepsilon}_s=f\circ g=\varepsilon_t$. The proof of all the other assertions is
similar.
\end{proof}

The proof of our next lemma is straightforward.

\begin{lemma}\lelabel{1.10}
Let $H$ be a prebialgebra.
\begin{eqnarray}
{\rm (c)}&\Longrightarrow& \varepsilon_s(h)\varepsilon_t(k)=\varepsilon_t(k)\varepsilon_s(h);
\eqlabel{1.10.1}\\
{\rm (m)}&\Longrightarrow&\varepsilon_s(h_{(1)})\ot \varepsilon_t(h_{(2)})=
\varepsilon_s(h_{(2)})\ot \varepsilon_t(h_{(1)}).\eqlabel{1.10.2}
\end{eqnarray}
\end{lemma}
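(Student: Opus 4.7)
The plan is to prove the two implications in parallel: each is an ``exchange'' identity that becomes visible once one combines the two halves of the hypothesis into a single symmetry statement, after which the argument is routine manipulation of Sweedler indices.

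For \equref{1.10.1} I would first expand
\[ \varepsilon_s(h)\varepsilon_t(k)=\lan\varepsilon,h1_{(2)}\ran\lan\varepsilon,1_{(1')}k\ran\,1_{(1)}1_{(2')} \]
and similarly $\varepsilon_t(k)\varepsilon_s(h)=\lan\varepsilon,1_{(1)}k\ran\lan\varepsilon,h1_{(2')}\ran\,1_{(2)}1_{(1')}$. Renaming the two independent copies of $\Delta(1)$ in the first expression and commuting scalars rewrites it as $\lan\varepsilon,1_{(1)}k\ran\lan\varepsilon,h1_{(2')}\ran\,1_{(1')}1_{(2)}$, so the problem reduces to exchanging $1_{(1')}1_{(2)}$ with $1_{(2)}1_{(1')}$ inside that scalar context. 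To do so I would exploit that (c) yields two presentations of $\Delta^2(1)$,
\[ 1_{(1)}\ot 1_{(2)}1_{(1')}\ot 1_{(2')}\;=\;\Delta^2(1)\;=\;1_{(1)}\ot 1_{(1')}1_{(2)}\ot 1_{(2')} \]
(the left equality being (lc), the right being (rc)), and apply the linear map $\lan\varepsilon,-\cdot k\ran\ot\mathrm{id}\ot\lan\varepsilon,h\cdot-\ran\colon H^{\ot 3}\to H$ to both sides; this reads off precisely the desired equality.

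For \equref{1.10.2} I would start from the observation that under (m) both (lm) and (rm) compute $\varepsilon(xyz)$, giving the exchange identity
\[ \lan\varepsilon,xy_{(1)}\ran\lan\varepsilon,y_{(2)}z\ran=\lan\varepsilon,xy_{(2)}\ran\lan\varepsilon,y_{(1)}z\ran\qquad(x,y,z\in H). \]
Specialising $y=h$, $x=1_{(1')}$, $z=1_{(2)}$ and permuting scalars this becomes
\[ \lan\varepsilon,h_{(1)}1_{(2)}\ran\lan\varepsilon,1_{(1')}h_{(2)}\ran=\lan\varepsilon,h_{(2)}1_{(2)}\ran\lan\varepsilon,1_{(1')}h_{(1)}\ran; \]
multiplying both sides by $1_{(1)}\ot 1_{(2')}$ identifies them respectively with $\varepsilon_s(h_{(1)})\ot\varepsilon_t(h_{(2)})$ and $\varepsilon_s(h_{(2)})\ot\varepsilon_t(h_{(1)})$, which is the claim.

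The only real step in either part is identifying the correct auxiliary exchange identity produced by the combined hypothesis: the two presentations of $\Delta^2(1)$ for \equref{1.10.1} and the two splittings of $\varepsilon(xyz)$ for \equref{1.10.2}. Once these are in hand, nothing delicate happens, consistent with the authors' remark that the proof is ``straightforward''.
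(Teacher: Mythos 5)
Your proof is correct. The paper explicitly omits the argument (``The proof of our next lemma is straightforward''), and your derivation --- reading \equref{1.10.1} off by applying $\lan\varepsilon,(-)k\ran\ot\mathrm{id}\ot\lan\varepsilon,h(-)\ran$ to the two presentations of $\Delta^2(1)$ given by (lc) and (rc), and \equref{1.10.2} by applying the exchange identity coming from (lm)$=$(rm) to the legs $1_{(1')}\ot 1_{(2)}$ while carrying along $1_{(1)}\ot 1_{(2')}$ --- is precisely the direct computation the authors intend.
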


\begin{proposition}\prlabel{1.11}
If (m) holds, then we have  anti-algebra  isomorphisms
$$\xymatrix{H_t\ar[r]<3pt>^{\ol{\varepsilon}_s}&\ol{H}_s\ar[l]<3pt>^{\varepsilon_t}}~~{\rm and}~~
\xymatrix{H_s\ar[r]<3pt>^{\ol{\varepsilon}_t}&\ol{H}_t\ar[l]<3pt>^{\varepsilon_s}}.$$
If (c) holds, then we have  anti-algebra  isomorphisms
$$\xymatrix{H_s^*\ar[r]<3pt>^{\ol{\varepsilon}_t^*}&\ol{H}_t^*\ar[l]<3pt>^{\varepsilon_s^*}}~~{\rm and}~~
\xymatrix{H_t^*\ar[r]<3pt>^{\ol{\varepsilon}_s^*}&\ol{H}_s^*\ar[l]<3pt>^{\varepsilon_t^*}}.$$
\end{proposition}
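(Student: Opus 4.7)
The four claimed isomorphisms are symmetric: those under (c) mirror those under (m) by passing to the starred maps and replacing (rm), (lm) by (rc), (lc) throughout, while within each block the two pairs differ only by interchanging the roles of $s$ and $t$ (and bar with unbar). I will therefore focus on $\ol{\varepsilon}_s\colon H_t\to\ol{H}_s$ with inverse $\varepsilon_t\colon\ol{H}_s\to H_t$ under (m); the other three cases go through by the same template applied to the appropriate variants of \leref{1.9}, \leref{1.4} and \leref{1.1}.

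First, both maps land in the claimed subspaces by the very definitions $H_t=\im(\varepsilon_t)$ and $\ol{H}_s=\im(\ol{\varepsilon}_s)$. To see that they are mutually inverse, I would combine \leref{1.3} and \leref{1.9}: idempotency (which holds under (rm) for $\varepsilon_t$ and under (lm) for $\ol{\varepsilon}_s$) makes each map restrict to the identity on its image, while \leref{1.9} gives $\varepsilon_t\circ\ol{\varepsilon}_s=\varepsilon_t$ under (lm) and $\ol{\varepsilon}_s\circ\varepsilon_t=\ol{\varepsilon}_s$ under (rm); restriction to $H_t$ and $\ol{H}_s$ respectively collapses these to $\Id_{H_t}$ and $\Id_{\ol{H}_s}$.

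The core of the argument is to establish anti-multiplicativity: $\ol{\varepsilon}_s(ab)=\ol{\varepsilon}_s(b)\ol{\varepsilon}_s(a)$ for $a,b\in H_t$. The plan is a three-move chain. First, apply \leref{1.4} under (lm) to rewrite $\ol{\varepsilon}_s(b)\ol{\varepsilon}_s(a)=\ol{\varepsilon}_s(\ol{\varepsilon}_s(b)\,a)$. Second, use that (rm) forces $H_t=I_t$ by \equref{1.2.3}, so $\Delta(a)=1_{(1)}a\ot 1_{(2)}$; feeding this into \equref{1.1.4} yields the commutation $a\,\ol{\varepsilon}_s(b)=\ol{\varepsilon}_s(b)\,a$, so the argument of the outer $\ol{\varepsilon}_s$ can be rewritten as $a\,\ol{\varepsilon}_s(b)$. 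Third, apply $\varepsilon$ to \equref{1.1.4} to obtain the counit identity $\lan\varepsilon,h\,\ol{\varepsilon}_s(g)\ran=\lan\varepsilon,hg\ran$ under (lm), and insert it with $h=1_{(2)}a$ and $g=b$ into the defining formula $\ol{\varepsilon}_s(x)=\lan\varepsilon,1_{(2)}x\ran 1_{(1)}$; this collapses $\ol{\varepsilon}_s(a\,\ol{\varepsilon}_s(b))$ to $\ol{\varepsilon}_s(ab)$. Chaining the three moves completes the computation, and anti-multiplicativity of the inverse $\varepsilon_t$ then comes for free.

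The hard part is the middle step: recognising that (rm) and (lm) together force elements of $H_t$ and $\ol{H}_s$ to commute pointwise inside $H$, which is exactly what converts the \emph{multiplicative} identity in \leref{1.4} into the \emph{anti}-multiplicative identity we want. Once this commutation is in hand the remaining manipulations are purely formal, and the other three isomorphisms are dispatched by the symmetric, respectively starred, analogues of the same three moves.
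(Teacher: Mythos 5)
Your proposal is correct, but it reaches the conclusion by a genuinely different (though parallel) route. For bijectivity the paper does not touch \leref{1.9}: it uses \equref{1.2.6} to identify $H_t^*$ with $\ol{H}_t^*$ and then splices together the two isomorphisms of \leref{1.7}, $g:H_t\to H_t^*$ and $f':\ol H_t^*\to \ol H_s$, observing that $f'\circ g=\ol\varepsilon_s$ and $f\circ g=\varepsilon_t$; your argument via idempotency (\leref{1.3}) combined with $\varepsilon_t\circ\ol\varepsilon_s=\varepsilon_t$ and $\ol\varepsilon_s\circ\varepsilon_t=\ol\varepsilon_s$ from \leref{1.9} is more self-contained and equally valid. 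For anti-multiplicativity the paper works on the other leg of the isomorphism: it shows $\varepsilon_t(y)\varepsilon_t(y')=\varepsilon_t(y'y)$ for $y,y'\in\ol H_s$, starting from $\varepsilon_t(y)\varepsilon_t(y')=\varepsilon_t(\varepsilon_t(y)y')$ (\leref{1.4} under (rm)) and then using $\ol H_s=\ol I_s$ from \equref{1.2.4} together with (rm); you instead prove $\ol\varepsilon_s(ab)=\ol\varepsilon_s(b)\ol\varepsilon_s(a)$ for $a,b\in H_t$, starting from \leref{1.4} under (lm) and then using $H_t=I_t$ from \equref{1.2.3} together with \equref{1.1.4}. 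Both computations are sound mirror images of each other; a pleasant feature of yours is that it isolates the commutation $a\,\ol\varepsilon_s(b)=\ol\varepsilon_s(b)\,a$ for $a\in H_t$, which is exactly the first identity of \equref{eq:sbar_t} that the paper only records (with essentially your derivation) in the unnumbered lemma \emph{after} this proposition. Two small points you should make explicit: $H_t$ and $\ol H_s$ are closed under multiplication (this is \leref{1.4}, which you already invoke), and for the two isomorphisms under (c) the commutation step must be run with the $I^*$-characterizations (e.g.\ $\varphi\in I_t^*=H_t^*$ under (rc), whose defining identity plays the role of $\Delta(a)=1_{(1)}a\ot 1_{(2)}$). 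That transfer does work, but it is not a purely mechanical relabelling --- the paper itself writes out the (c) case as a separate evaluation against elements of $H_t$ rather than appealing to symmetry.
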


\begin{proof}
Assume that (m) holds. Then $H_t^*=\ol{H}_t^*$, see \equref{1.2.6}. According
to \leref{1.7}, we have isomorphisms
$$\xymatrix{H_t\ar[r]<3pt>^(.4){g}&H_t^*=\ol{H}_t^*\ar[l]<3pt>^(.6){f}
\ar[r]<3pt>^(.6){f'}&\ol{H}_s\ar[l]<3pt>^(.4){g}}
$$
Then it suffices to observe that $f'\circ g=\ol{\varepsilon}_s$ and $f\circ g=\varepsilon_t$.
 Moreover, for any $y,y'\in \ol{H}_s$, 
\begin{eqnarray*}
&&\hspace{-2cm}\varepsilon_t(y)\varepsilon_t(y')\equal{(*)}
\varepsilon_t(\varepsilon_t(y)y')=
\lan\varepsilon,1_{(1)}1_{(2')}y'\ran\lan\varepsilon,1_{(1')}y\ran 1_{(2)}\\
&\equal{\equref{1.2.4}}&
\lan\varepsilon,1_{(1)}y'_{(2)}\ran\lan\varepsilon,y'_{(1)}y\ran
1_{(2)}\equal{\rm (rm)} 
\lan\varepsilon,1_{(1)}y'y\ran1_{(2)}=
\varepsilon_t(y'y),
\end{eqnarray*}
where we applied \leref{1.4} at $(*)$.\\
If (c) holds then for any $\varphi,\varphi'\in \ol H_t^*$ and $z\in H_t$,
\begin{eqnarray*}
&&\hspace{-2cm}\lan\varepsilon_s^*(\varphi)\varepsilon_s^*(\varphi'),z\ran
\equal{(*_1)}
\lan\varepsilon_s^*(\varphi\varepsilon_s^*(\varphi')),z\ran\\
&\equal{(*_2)}&\lan\varphi,z_{(2)}\ran\lan\varphi',\varepsilon_t(z_{(1)})\ran=
\lan\varphi,z_{(2)}\ran\lan\varphi',1_{(2)}\ran\lan\varepsilon,1_{(1)}z_{(1)}\ran\\
&\equal{(*_3)}&\lan\varphi,1_{(1)}z\ran\lan\varphi',1_{(2)}\ran
\equal{(*_4)}\lan\varphi,z_{(1)}\ran\lan\varphi',z_{(2)}\ran\\
&=&\lan\varphi'\varphi,z\ran\equal{(*_2)}
\lan\varepsilon_s^*(\varphi'\varphi),z\ran .
\end{eqnarray*}
$(*_1)$: we applied \leref{1.4}; $(*_2)$: $z\in H_t$; $(*_3)$: $\varphi\in\ol H_t^*=\ol I_t^*$;
$(*_4)$: $z\in H_t=I_t$.
Since by \leref{1.4} $\varepsilon_s^*(\varphi)\varepsilon_s^*(\varphi')$ is an
element of $H^*_s\cong (H_t)^*$, this proves multiplicativity of
$\varepsilon_s^*:\ol H_t^*\to H^*_s$. It is an isomorphism by similar
considerations as in the first part of the proof.

\end{proof}

\begin{lemma}
For a monoidal prebialgebra $H$ and any elements $h,k\in H$, the following
identities hold. 
\begin {eqnarray}
&\ol\varepsilon_s(h)\varepsilon_t(k)=\varepsilon_t(k)\ol\varepsilon_s(h)\qquad
&1_{(1)}\ol\varepsilon_s(h)\ot 1_{(2)}=1_{(1)}\ot 1_{(2)}\varepsilon_t(h)
\eqlabel{eq:sbar_t}\\
&\varepsilon_s(h)\ol\varepsilon_t(k)=\ol \varepsilon_t(k)\varepsilon_s(h)\qquad
&\varepsilon_s(h)1_{(1)}\ot 1_{(2)}=1_{(1)}\ot \ol\varepsilon_t(h)1_{(2)}.
\eqlabel{eq:tbar_s}
\end{eqnarray}
\end{lemma}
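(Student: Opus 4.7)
The plan is to prove the four identities by short manipulations combining \leref{1.1} with the $I_?=H_?$ identifications of \leref{1.2} and coassociativity of $\Delta(1)$. The two displays \equref{eq:sbar_t} and \equref{eq:tbar_s} are related by the $s\leftrightarrow t$/overline duality, so I focus on \equref{eq:sbar_t} and sketch the mirror-image steps for \equref{eq:tbar_s}.

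For the commutation $\ol\varepsilon_s(h)\varepsilon_t(k)=\varepsilon_t(k)\ol\varepsilon_s(h)$, I would apply \equref{1.1.1}, which under (rm) reads $a\varepsilon_t(b)=\lan\varepsilon,a_{(1)}b\ran a_{(2)}$, with $a:=\ol\varepsilon_s(h)$ and $b:=k$. This turns the product into $\lan\varepsilon,\ol\varepsilon_s(h)_{(1)}k\ran\ol\varepsilon_s(h)_{(2)}$. Next I would invoke \equref{1.2.4}: since (lm) holds, $\ol H_s=\ol I_s$, and hence $\Delta(\ol\varepsilon_s(h))=1_{(1)}\ot 1_{(2)}\ol\varepsilon_s(h)$. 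Substituting and pulling the scalar out collapses the expression to $\bigl(\lan\varepsilon,1_{(1)}k\ran 1_{(2)}\bigr)\ol\varepsilon_s(h)$, and the bracketed factor is $\varepsilon_t(k)$ by \equref{1.1.13}.

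For the tensor identity $1_{(1)}\ol\varepsilon_s(h)\ot 1_{(2)}=1_{(1)}\ot 1_{(2)}\varepsilon_t(h)$, I would expand the two sides independently. On the left I would apply \equref{1.1.4} with the generic left variable specialized to $1_{(1)}$ and the inner argument to $h$, obtaining $\lan\varepsilon,1_{(1)(2)}h\ran 1_{(1)(1)}\ot 1_{(2)}$; coassociativity of $\Delta^2(1)=1_{(1)}\ot 1_{(2)}\ot 1_{(3)}$ then rewrites it as $\lan\varepsilon,1_{(2)}h\ran 1_{(1)}\ot 1_{(3)}$. On the right I would apply \equref{1.1.1} with generic left variable $1_{(2)}$ and inner argument $h$, obtaining $1_{(1)}\ot\lan\varepsilon,1_{(2)(1)}h\ran 1_{(2)(2)}$; coassociativity again identifies this with $\lan\varepsilon,1_{(2)}h\ran 1_{(1)}\ot 1_{(3)}$, matching the left-hand side.

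The display \equref{eq:tbar_s} is handled by the mirror recipe: for the commutation I would apply \equref{1.1.3} (from (lm)) and use that (rm) gives $H_s=I_s$ by \equref{1.2.3}, so $\Delta(\varepsilon_s(h))=1_{(1)}\ot\varepsilon_s(h)1_{(2)}$; for the second identity I would expand $\varepsilon_s(h)1_{(1)}$ via \equref{1.1.2} and $\ol\varepsilon_t(h)1_{(2)}$ via \equref{1.1.3}, with both sides reducing to $\lan\varepsilon,h 1_{(2)}\ran 1_{(1)}\ot 1_{(3)}$ after coassociativity. The only real obstacle is index bookkeeping: selecting at each step which of \equref{1.1.1}--\equref{1.1.4} produces the projection wanted on the fixed side, and then tidying the Sweedler indices of $\Delta^2(1)$; no idea beyond \leref{1.1} and \leref{1.2} is needed.
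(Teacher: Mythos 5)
Your proposal is correct and follows essentially the same route as the paper: all four identities are obtained by applying the characterizations \equref{1.1.1}--\equref{1.1.4} to the legs of $\Delta^2(1)$ and reading the result in two ways via coassociativity. The only cosmetic difference is that for the commutation identities you cite the memberships $\ol\varepsilon_s(h)\in\ol I_s$ and $\varepsilon_s(h)\in I_s$ from \leref{1.2} to compute $\Delta(\ol\varepsilon_s(h))$ and $\Delta(\varepsilon_s(h))$ directly, whereas the paper unrolls the same manipulation inline on $\Delta^2(1)$; both are valid under the standing hypotheses (lm) and (rm).
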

\begin{proof}
We prove only \equref{eq:sbar_t}, \equref{eq:tbar_s} is checked
symmetrically. As for the first equality, 
\begin{eqnarray*}
\ol\varepsilon_s(h)\varepsilon_t(k)&=&
1_{(1)}\varepsilon_t(k)\lan\varepsilon,1_{(2)}h\ran\equal{\equref{1.1.1}}
\lan\varepsilon,1_{(1)}k\ran 1_{(2)}\lan\varepsilon,1_{(3)}h\ran\\
&\equal{\equref{1.1.4}}&\lan\varepsilon,1_{(1)}k\ran 1_{(2)}\ol\varepsilon_s(h)=
\varepsilon_t(k)\ol\varepsilon_s(h).
\end{eqnarray*}
The second equality follows by
$$
1_{(1)}\ol\varepsilon_s(h)\ot 1_{(2)}\equal{\equref{1.1.4}}
\lan\varepsilon,1_{(2)}h\ran1_{(1)}\ot 1_{(3)}\equal{\equref{1.1.1}}
1_{(1)}\ot 1_{(2)}\varepsilon_t(h).
$$
\end{proof}

Let $A$ be a finite algebra, with finite dual basis $\sum_i a_i\ot a_i^*\in
A\ot A^*$. It is well-known that $A^*$ carries a coalgebra structure, given by
the formula $$\Delta(a^*)=\sum_{i,j} \lan a^*,a_ia_j\ran a_j^*\ot a_i^*.$$

\begin{proposition}\prlabel{1.12}
 If any of (rm) and (rc) holds, then $H_s$ and $H_t$, and also $H^*_s$
and $H^*_t$ carry coalgebra structures. If any of (lm) and (lc) holds, then
$\ol H_s$ and $\ol H_t$, and also $\ol H^*_s$ and $\ol H^*_t$ carry coalgebra
structures.   
The comultiplication maps are given by the formulas
$$\begin{array}{cc}
\Delta_s(y)=  \varepsilon_s  (1_{(1)})\ot \varepsilon_s(y1_{(2)})&
\Delta_s^*(\psi)=  \varepsilon_s^*(g'(1_{(1)}))\ot \varepsilon_s^*(\psi g(1_{(2)}))  
\\
\Delta_t(z)= \varepsilon_t(1_{(1)}z)\ot  \varepsilon_t  (1_{(2)})&
\Delta_t^*(\varphi)=  \varepsilon_t^*(g'(1_{(1)})\varphi)\ot \varepsilon_t^*(g(1_{(2)}))
\\
\ol{\Delta}_s(y)= \ol \varepsilon_s  (1_{(1)})\ot\ol{\varepsilon}_s(1_{(2)}y)&
\ol{\Delta}_s^*(\psi)=  \ol \varepsilon_s^*(\psi g'(1_{(2)}))\ot\ol\varepsilon_s^*(g(1_{(1)}))
\\
\ol{\Delta}_t(z)=\ol{\varepsilon}_t(z1_{(1)})\ot  \ol \varepsilon_t (1_{(2)})&
\ol{\Delta}_t^*(\varphi)= \ol\varepsilon_t^*(g'(1_{(2)})) \ot \ol\varepsilon_t^*(g(1_{(1)})\varphi)
\end{array}$$
\end{proposition}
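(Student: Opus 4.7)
The strategy is to reduce everything to the standard construction recalled just before the statement: the linear dual of a finite $k$-algebra is naturally a coalgebra. Under (rm) or (rc), combining \leref{1.4} and \leref{lem:more_algs} makes $H_s$ and $H_t$ into subalgebras of $H$; \leref{1.8} shows they are finite $k$-modules with the displayed dual bases; and \leref{1.7} combined with \leref{1.8} supplies canonical isomorphisms $H_s\cong H_s^*\cong (H_t)^*$ and $H_t\cong H_t^*\cong (H_s)^*$. The dual coalgebra structures of $(H_t)^*$ and $(H_s)^*$ therefore transport along these isomorphisms, yielding coalgebra structures on all four spaces $H_s,H_t,H_s^*,H_t^*$. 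The parallel argument---using (lm) or (lc), the subalgebras $\ol H_s,\ol H_t$, the maps $\ol\varepsilon_s,\ol\varepsilon_t$, and the dual bases $1_{(1)}\ot g'(1_{(2)})$, $1_{(2)}\ot g(1_{(1)})$ from \leref{1.8}---produces the four barred cases.

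To identify the transported structure with the explicit formula for $\Delta_s$: an element $y\in H_s$ corresponds under $H_s\cong H_s^*\cong (H_t)^*$ to the functional $z\mapsto\lan\varepsilon,yz\ran$ on $H_t$; plugging it into the recalled formula $\Delta(a^*)=\sum_{i,j}\lan a^*,a_ia_j\ran a_j^*\ot a_i^*$ with $a_i\ot a_i^*=1_{(2)}\ot g'(1_{(1)})$, and then returning to $H_s\ot H_s$ by applying $f'\ot f'$ (equivalently $\varepsilon_s\ot\varepsilon_s$), one obtains an expression of the form $\lan\varepsilon,y\,1_{(2)}1_{(2')}\ran\varepsilon_s(1_{(1')})\ot\varepsilon_s(1_{(1)})$. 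A short calculation using \equref{eq:eps_1}, the defining formulas for $\varepsilon_s$ and $\varepsilon_t$, and either the characterization $\Delta(y)=1_{(1)}\ot y1_{(2)}$ of $H_s=I_s$ under (rm) (from \leref{1.2}) or the relation \equref{1.1.6} under (rc), rewrites this as the stated $\varepsilon_s(1_{(1)})\ot\varepsilon_s(y1_{(2)})$. The three remaining formulas in the (rm)/(rc) case follow by interchanging the roles of $(g,f)$ with $(g',f')$ and of $\varepsilon_s$ with $\varepsilon_t$; the four barred formulas emerge from the parallel computation under (lm) or (lc).

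The main obstacle is precisely this matching step: the raw output of the transported coproduct involves several copies of $\Delta(1)$ in awkward positions, and rewriting it into the intrinsic form of the statement requires careful Sweedler-index manipulation via \equref{1.1.1}--\equref{1.1.8} and the identities of \leref{lem:eps_1}, handled separately in the (rm) and (rc) cases (and their left-handed counterparts), since the simplifications one can perform depend on which axiom is in force. Once the formulas are in place, coassociativity and the counit axiom---with counit $\varepsilon\vert_{H_s}$, $\varepsilon\vert_{H_t}$, their barred analogues, and evaluation at $1$ in the starred cases---come for free from the abstract dual-of-algebra construction, though each can equally well be verified directly from the explicit expressions.
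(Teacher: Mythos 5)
Your overall strategy coincides with the paper's: both deduce existence from the dual--coalgebra--of--a--finite--algebra construction, using the subalgebra structures from \leref{1.4} and \leref{lem:more_algs}, the dual bases of \leref{1.8} and the isomorphisms of \leref{1.7}, and then compute the explicit formulas from the dual bases. There is, however, a genuine flaw in your identification step for $\Delta_s$ (and symmetrically for $\Delta_t$ and the barred cases). You realize $H_s$ as $(H_t)^*$, i.e.\ you treat $y\in H_s$ as the functional $g'(y)\vert_{H_t}$ on the \emph{algebra} $H_t$, apply the recalled formula with the dual basis $1_{(2)}\ot g'(1_{(1)})$ of $H_t$, and pull back along $f'\ot f'$. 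Your intermediate expression $\lan\varepsilon,y1_{(2)}1_{(2')}\ran\varepsilon_s(1_{(1')})\ot\varepsilon_s(1_{(1)})$ is the correct output of that route, but it simplifies --- using only the definition of $\varepsilon_s$ and its idempotency (\leref{1.3}), no further axiom --- to $\varepsilon_s(y1_{(2)})\ot\varepsilon_s(1_{(1)})$, which is the \emph{flip} of the stated $\varepsilon_s(1_{(1)})\ot\varepsilon_s(y1_{(2)})$. No amount of Sweedler manipulation closes this gap: the two expressions are a comultiplication and its co-opposite, and they coincide only when $H_t$ is commutative, which is not assumed ($H_t$ can be $M_n(k)$).

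The source of the discrepancy is that $g:H_t\to H_t^*$ (and likewise $g':H_s\to H_s^*$) reverses multiplication: under (rm) one has $\lan g(z)g(z'),h\ran=\lan\varepsilon,h_{(2)}z\ran\lan\varepsilon,h_{(1)}z'\ran=\lan\varepsilon,hz'z\ran=\lan g(z'z),h\ran$ by \equref{1.1.1}, so transporting the dual coalgebra of the algebra $H_t$ along this anti-isomorphism yields the co-opposite of the dual coalgebra of the algebra $(H_s)^*\cong H_t^*$. The repair is to follow the paper's route: realize $H_s\cong((H_s)^*)^*$ directly as the linear dual of the algebra $(H_s)^*\cong H_t^*$, whose dual basis is $g(1_{(2)})\ot 1_{(1)}$, and treat $y$ itself as the functional; this gives $\Delta_s(y)=\lan g(1_{(2)})g(1_{(2')}),y\ran 1_{(1')}\ot 1_{(1)}=\varepsilon_s(y_{(1)})\ot\varepsilon_s(y_{(2)})$, which equals the stated formula because $H_s=I_s$. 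Your treatment of the starred spaces $H_s^*$ and $H_t^*$ as duals of $H_t$ and $H_s$ is the correct one for those cases; the problem arises only when you additionally transport along $f$ or $f'$ without accounting for this reversal.
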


\begin{proof}
Let $H$ be a 
prebialgebra satisfying (rm)  or (rc).  From Lemmas \ref{le:1.4},
\ref{le:lem:more_algs}  and \ref{le:1.8}, we know that $H_s$ and $H_t$ are
finitely generated projective $k$-algebras, hence $(H_s)^*\cong H_t^*$ and
$(H_t)^*\cong H_s^*$ are $k$-coalgebras. The other statements are obtained in
a similar way.\\ 
The comultiplication maps can be computed easily using the finite dual bases given \leref{1.8}.
For example, if  (rm) or  (rc) holds, then $1_{(2)}\ot
g'(1_{(1)} )\in H_t\ot H_s^*\cong H_t\ot (H_t)^*$ is a finite dual basis for
$H_t$, hence the comultiplication on $H_t$ is given by
\begin{eqnarray*}
\Delta_t(z)&=&\lan g'(1_{(1)})g'(1_{(1')}),z\ran 1_{(2')}\ot 1_{(2)}\\
&=&  \varepsilon_t(z_{(1)})\ot \varepsilon_t(z_{(2)})\stackrel{H_t=I_t}=
\varepsilon_t(1_{(1)}z) \ot \varepsilon_t(1_{(2)}).
\end{eqnarray*}
 The comultiplication on $H^*_s$ is defined as the transpose of the
opposite multiplication in $H_t$. That is, for all $z,z'\in H_t$ and $\psi \in
H^*_s\cong (H_t)^*$, 
\begin{eqnarray*}
&&\hspace{-2cm}\lan\Delta_s^*(\psi),z'\ot z\ran=
\lan\psi,zz'\ran=
\lan\varepsilon,z_{(1)}z'_{(1)}\ran\lan \psi,z_{(2)}z'_{(2)}\ran\\
&\stackrel{H_t=I_t}=&\lan\varepsilon,z_{(1)}z'\ran\lan \psi,z_{(2)}\ran\stackrel{z'\in H_t}=
\lan\varepsilon,1_{(1)}z'\ran\lan\varepsilon,z_{(1)}1_{(2)}\ran\lan
\psi,z_{(2)}\ran\\
&=&\lan g'(1_{(1)})\ot \psi g(1_{(2)}),z'\ot z\ran\\
&\stackrel{z,z'\in H_t}=&\lan \varepsilon_s^*(g'(1_{(1)}))\ot
\varepsilon_s^*(\psi g(1_{(2)})),z'\ot z\ran .
\end{eqnarray*}

\end{proof}

\begin{proposition}\prlabel{coalg1}
Let $H$ be a prebialgebra satisfying  (rm) or  (rc). Then
$\varepsilon_t:\ H\to H_t$ and $\varepsilon_s:\ H\to H_s$ are coalgebra
maps. In a similar way, if  (lm) or  (lc) is satisfied, then
$\ol{\varepsilon}_s$ and $\ol{\varepsilon}_t$ are coalgebra maps.\\
 If (m) holds then 
$\xymatrix @C=15pt {
H_t\ar@<1.5pt>[r]^-{\ol \varepsilon_s}&
\ol H_s\ar@<1.5pt>[l]^-{\varepsilon_t}}$ 
and 
$\xymatrix @C=15pt {
H_s\ar@<1.5pt>[r]^-{\ol \varepsilon_t}&
\ol H_t\ar@<1.5pt>[l]^-{\varepsilon_s}}$
are anti-algebra and coalgebra isomorphisms.\\

If (c) holds, then $\ol{H}_t=H_t$, $\ol{H}_s=H_s$, $\ol{\Delta}_t=\Delta_t^{\rm cop}$ and
$\ol{\Delta}_s=\Delta_s^{\rm cop}$. 
\end{proposition}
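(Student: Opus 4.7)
The plan is to address the three assertions in turn, with the main work concentrated in the first.

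For the first assertion, I verify that $\varepsilon_t : H \to H_t$ is a coalgebra map under (rc); the case (rm), and the barred versions under (lc)/(lm), are analogous by symmetry. The counit compatibility $\varepsilon \circ \varepsilon_t = \varepsilon$ is immediate from the counit axiom applied to $\varepsilon_t(h) = \varepsilon(1_{(1)}h)1_{(2)}$. For the coproduct, the intermediate formula $\Delta_t(z) = \varepsilon_t(z_{(1)}) \otimes \varepsilon_t(z_{(2)})$ for $z \in H_t$ (visible in the proof of \prref{1.12} before specializing via $H_t = I_t$) reduces the task to verifying $(\varepsilon_t \otimes \varepsilon_t)\Delta \varepsilon_t = (\varepsilon_t \otimes \varepsilon_t)\Delta$ on all of $H$. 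Applying $\varepsilon_t \otimes \mathrm{id}$ to \equref{1.1.5}, together with $1_{(2)} \in H_L = H_t$ under (rc), expresses $(\varepsilon_t \otimes \varepsilon_t)\Delta(h)$ as $\varepsilon_t(1_{(1)}h) \otimes 1_{(2)}$; substituting $\varepsilon_t(h)$ for $h$ yields $\varepsilon_t(1_{(1)}\varepsilon_t(h)) \otimes 1_{(2)}$, and I verify the equality of the two by expanding $\varepsilon_t(h) = \varepsilon(1_{(1'')}h)1_{(2'')}$ and using (rc) to rearrange the resulting tensor of three copies of $\Delta(1)$. This is the most technical step and the main obstacle.

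For the second assertion, Proposition \ref{pr:1.11} already provides the anti-algebra isomorphisms; it remains to show they are coalgebra maps. Under (m) = (lm) + (rm), \leref{1.9} gives $\ol\varepsilon_s \circ \varepsilon_t = \ol\varepsilon_s$ and $\varepsilon_t \circ \ol\varepsilon_s = \varepsilon_t$ on $H$, yielding factorizations $\ol\varepsilon_s = (\ol\varepsilon_s|_{H_t}) \circ \varepsilon_t$ and $\varepsilon_t = (\varepsilon_t|_{\ol H_s}) \circ \ol\varepsilon_s$ of the surjective coalgebra maps $\ol\varepsilon_s : H \to \ol H_s$ and $\varepsilon_t : H \to H_t$ established in the first assertion. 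An elementary observation — that a $k$-linear $r : C \to D$ is a coalgebra map whenever $r \circ p$ is, for some surjective coalgebra map $p$ — then shows the restricted maps are coalgebra maps. The remaining pair $H_s \leftrightarrow \ol H_t$ is treated identically.

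For the third assertion, the equalities $\ol H_t = H_t$ and $\ol H_s = H_s$ are \equref{1.2.7}. For $z \in H_t = \ol H_t = I_t \cap \ol I_t$ we have the double description $\Delta(z) = 1_{(1)}z \otimes 1_{(2)} = z 1_{(1)} \otimes 1_{(2)}$, and using that $1_{(2)}$ lies in both $H_t$ and $\ol H_t$ the formulas of \prref{1.12} specialize to $\Delta_t(z) = \varepsilon_t(1_{(1)}z) \otimes 1_{(2)}$ and $\ol\Delta_t(z) = \ol\varepsilon_t(z 1_{(1)}) \otimes 1_{(2)}$. The equality $\ol\Delta_t = \Delta_t^{\rm cop}$ then follows by a direct calculation expanding $\varepsilon_t$ and $\ol\varepsilon_t$ as $\varepsilon(1_{(1)}(-))1_{(2)}$ and $\varepsilon((-)1_{(1)})1_{(2)}$ respectively, and matching the resulting expressions after a tensor-flip by using the two forms of $\Delta^2(1)$ supplied by (rc) and (lc) simultaneously. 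The proof of $\ol\Delta_s = \Delta_s^{\rm cop}$ is entirely symmetric.
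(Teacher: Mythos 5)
Your overall strategy coincides with the paper's: part one is a direct verification that $\varepsilon_t$ intertwines $\Delta$ with $\Delta_t$, part two factors the restricted maps through the surjective coalgebra maps out of $H$ (exactly the paper's commuting-triangle argument via \leref{1.9}), and part three is a direct computation from the explicit formulas of \prref{1.12} using (c). Parts two and three are fine; in part three the paper commutes $z\in H_t$ past $1_{(1)}1_{(1')}\in H_s$ using \equref{1.5.1} and \equref{1.10.1} rather than juggling the two forms of $\Delta^2(1)$ directly, but that is a cosmetic difference.

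The genuine gap is in part one: you prove the (rc) case and dismiss (rm) as ``analogous by symmetry,'' but no such symmetry exists. Passing to the opposite algebra or opposite coalgebra exchanges the left and right versions of the axioms ((rm)$\leftrightarrow$(lm), (rc)$\leftrightarrow$(lc)) and correspondingly the barred and unbarred maps, so the (lc)/(lm) statements for $\ol{\varepsilon}_t,\ol{\varepsilon}_s$ do follow from the (rc)/(rm) ones; but (rm) and (rc) are exchanged only by linear duality $H\leftrightarrow H^*$, which is unavailable here since $H$ is not assumed finite. Correspondingly, the paper gives two genuinely different computations: under (rm) the key inputs are \equref{1.1.2}, \equref{eq:eps_1} and the identity $\varepsilon(kh)=\varepsilon(k\varepsilon_t(h))$ coming from \equref{1.1.1}, whereas under (rc) they are \equref{1.1.5} and idempotency of $\varepsilon_t$. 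Your reduction of the (rc) case to $(\varepsilon_t\ot\varepsilon_t)\circ\Delta\circ\varepsilon_t=(\varepsilon_t\ot\varepsilon_t)\circ\Delta$ is sound, and the rearrangement of three copies of $\Delta(1)$ that you defer is essentially the computation the paper performs; but the (rm) case needs its own argument and is currently missing from your proposal.
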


\begin{proof}
 If (rm) holds, then we have  for all $h\in H$ that
\begin{eqnarray*}
&&\hspace*{-2cm}\varepsilon_t(h_{(1)})\ot \varepsilon_t(h_{(2)})=
\varepsilon_t(h_{(1)})\ot\lan\varepsilon,1_{(1)}h_{(2)}\ran 1_{(2)}\stackrel{\equref{1.1.2}}=
\varepsilon_t(\varepsilon_s(1_{(1)})h)\ot 1_{(2)}\\
&\equal{\equref{eq:eps_1}}&\varepsilon_t(1_{(1)}h)\ot \varepsilon_t(1_{(2)})
=\lan\varepsilon, 1_{(1')}1_{(1)}h\ran 1_{(2')}\ot \varepsilon_t(1_{(2)})\\ 
&\equal{\equref{1.1.1}}&
\varepsilon_t(1_{(1)}\varepsilon_t(h))\ot \varepsilon_t(1_{(2)})=
\Delta_t(\varepsilon_t(h)).
\end{eqnarray*}
If (rc) holds, then $\varepsilon_t$ is idempotent, by \leref{1.3}, and we have
for all $h\in H$ that
$
\varepsilon_t(h_{(1)})\ot \varepsilon_t(h_{(2)})\stackrel{\equref{1.1.5}}=
\varepsilon_t(1_{(1)}h)\ot \varepsilon_t(1_{(2)})=
\Delta_t(\varepsilon_t(h))$.
\\
Assume that (m) holds. Then the stated maps are anti-algebra isomorphisms by
\prref{1.11}. Moreover, 
the diagram
$$\xymatrix{
&H\ar[dl]_{\varepsilon_s}\ar[dr]^{\ol{\varepsilon}_t}&\\
H_s\ar[rr]^{\ol{\varepsilon}_t}&& H_t}$$
commutes, by \equref{1.9.2}. Now the two vertical maps in the diagram are coalgebra maps,
hence the horizontal map is also a coalgebra map.\\
 Finally,  assume that (c) holds. Then
\begin{eqnarray*}
&&\hspace{-2cm}\ol{\Delta}_t(z)=\ol{\varepsilon}_t(z1_{(1)})\ot 1_{(2)}= 
\lan \varepsilon,z1_{(1)}1_{(1')}\ran 1_{(2')}\ot 1_{(2)}\\
&=&\lan \varepsilon,1_{(1)}1_{(1')}z\ran 1_{(2')}\ot 1_{(2)}=
1_{(2')}\ot {\varepsilon}_t(1_{(1')}z)=
\Delta^{\rm cop}_t(z),
\end{eqnarray*}
 where the third equality follows by \equref{1.5.1} and
\equref{1.10.1}. 
\end{proof}

\begin{corollary}\colabel{coalg1b}
Let $H$ be a prebialgebra satisfying (rm), and assume that $H^*$ is locally projective.
Then $\varepsilon_t^*:\ H^o\to H_t^*$ and $\varepsilon_s^*:\ H^o\to H_s^*$ are
coalgebra maps.
\end{corollary}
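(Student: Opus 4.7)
The plan is to deduce the corollary from the preceding Proposition \ref{pr:coalg1} by applying it to the prebialgebra $H^o$ rather than to $H$ itself. By \prref{1.6}, since $H$ satisfies (rm) and $H^*$ is locally projective, the subspace $H^o\subseteq H^*$ is a prebialgebra satisfying (rc); moreover, $H^*_t$ and $H^*_s$ are contained in $H^o$, and the maps $\varepsilon_t^*,\varepsilon_s^*:H^*\to H^*$ restrict to projections $H^o\to H^*_t$ and $H^o\to H^*_s$. Since $H^o$ is a prebialgebra satisfying (rc), \prref{coalg1} applied to $H^o$ produces the desired coalgebra maps, provided we can identify them with $\varepsilon_t^*$ and $\varepsilon_s^*$.

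The key step is therefore to match the \emph{intrinsic} target and source counital maps of $H^o$ with $\varepsilon_t^*$ and $\varepsilon_s^*$. The unit of $H^o$ is $\varepsilon\in H^o$ and its counit is evaluation at $1\in H$. Using the defining formula \equref{1.1.13} applied inside the prebialgebra $H^o$, I would compute that for any $\psi\in H^o$ and $h\in H$,
\begin{equation*}
\lan (\varepsilon_t)_{H^o}(\psi),h\ran
=\lan\varepsilon_{(1)}\psi,1\ran\lan\varepsilon_{(2)},h\ran
=\lan\psi,1_{(1)}\ran\lan\varepsilon,h\,1_{(2)}\ran
=\lan\psi,\varepsilon_s(h)\ran,
\end{equation*}
where the middle equality uses the opposite-convolution multiplication in $H^*$ and the comultiplication of $\varepsilon$ in $H^o$, and the last equality is formula \equref{1.1.13}. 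This identifies $(\varepsilon_t)_{H^o}$ with $\psi\mapsto\psi\circ\varepsilon_s=\varepsilon_t^*(\psi)$, i.e.\ with the restriction of $\varepsilon_t^*$ to $H^o$. The analogous calculation, swapping $1_{(1)}$ and $1_{(2)}$, shows $(\varepsilon_s)_{H^o}=\varepsilon_s^*|_{H^o}$.

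Combining the two steps: \prref{coalg1} applied to $H^o$ asserts that $(\varepsilon_t)_{H^o}$ and $(\varepsilon_s)_{H^o}$ are coalgebra maps into $(H^o)_t$ and $(H^o)_s$; by the identification above together with \prref{1.6}, these are precisely $\varepsilon_t^*:H^o\to H^*_t$ and $\varepsilon_s^*:H^o\to H^*_s$. I expect the main obstacle to be the bookkeeping in the intermediate computation above: the opposite convention for the multiplication in $H^*$, together with the need to evaluate $\Delta_{H^o}(\varepsilon)$ at the unit of $H$, makes it easy to swap $\varepsilon_s$ and $\varepsilon_t$ or land in $H^*_s$ when one meant $H^*_t$; everything else is routine once these identifications are pinned down.
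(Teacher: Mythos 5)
Your proposal is correct and follows exactly the paper's route: invoke \prref{1.6} to see that $H^o$ is a prebialgebra satisfying (rc) and then apply \prref{coalg1} to $H^o$. The identification $(\varepsilon_t)_{H^o}=\varepsilon_t^*|_{H^o}$ that you verify explicitly (and correctly, including the opposite-convolution bookkeeping) is left implicit in the paper, so your write-up is simply a more detailed version of the same argument.
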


\begin{proof}
We have seen in \prref{1.6} that $H^o$ is a prebialgebra satisfying (rc). Then it suffices
to apply \prref{coalg1} to $H^o$.
\end{proof}

\section{The representation category}\selabel{2}
In this section, we follow the constructions from \cite{Nill}.
Let $H$ be a prebialgebra, and take $M,N\in {}_H\Mm$. $M\ot N$ is a left $H\ot H$-module,
and then an associative but non-unital left $H$-module via restriction of
scalars via $\Delta$: 
$$h\cdot (m\ot n)= h_{(1)}m\cdot\otimes h_{(2)}\cdot n.$$
This $H$-action is non-unital in general, since we do not assume the property
$\Delta(1)=1\ot 1$. However, the $H$-action becomes unital if we restrict $M\ot N$
to $M\ot^l N=1\cdot (M\ot N)$. Then $\ot^l$ is an associative tensor product on
${}_H\Mm$, the associativity constraint is induced by the natural associativity
constraint on the category ${}_k\Mm$ of $k$-modules. In order to make ${}_H\Mm$
a monoidal category, we need a unit object.\\
Observe that $H_L^*=\im(g)$ is a left $H$-module. Indeed, an element of $\varphi\in H_L^*$ is
of the form $\varphi=h\leftact \varepsilon$, for some $h\in H$, and we define
$$k\cdot\varphi=(kh)\leftact \varepsilon=g(kh)\in H_L^*.$$
This $H$-action on $H_L^*$ is defined in such a way that $g:\ H\to H_L^*$ is left $H$-linear.
$H_L^*$ will be our candidate for unit object.
Take $M\in {}_H\Mm$, and consider the maps
\begin{eqnarray*}
l_M:\ H_L^*\ot M\to M&;&l_M(\varphi\ot m)=f(\varphi)m;\\
\ol{l}_M:\ M\to H_L^*\ot^l M&;& \ol{l}_M(m)=g(1_{(1)})\ot 1_{(2)}m;\\
r_M:\ M\ot H_L^*\to M&;& r_M(m\ot \varphi)=f'(\varphi)m;\\
\ol{r}_M:\ M\to M\ot^l H_L^*&;&\ol{r}_M(m)=1_{(1)}m\ot g(1_{(2)}).
\end{eqnarray*}
The restrictions of $l_M$ and $r_M$ will be the candidates for the left and
right unit constraints, and $\ol{l}_M$ and $\ol{r}_M$ for their inverses. For
all $m\in M$, we have that
\begin{equation}\eqlabel{2.0.1}
(l_M\circ \ol{l}_M)(m)=(r_M\circ \ol{r}_M)(m)=m.
\end{equation}

\begin{proposition}\prlabel{2.1}
For a prebialgebra $H$, the following conditions are equivalent:
\begin{enumerate}
\item $\ol{l}_M$ (resp. $\ol{r}_M$) is left $H$-linear, for all $M\in {}_H\Mm$;
\item $\ol{l}_H$ (resp. $\ol{r}_H$) is left $H$-linear;
\item for all $h\in H$, we have that $g(1_{(1)})\ot 1_{(2)}h=g(h_{(1)})\ot h_{(2)}$
(resp. $1_{(1)}h\ot g(1_{(2)})=h_{(1)}\ot g(h_{(2)})$);
\item (lm) (resp. (rm)) holds.
\end{enumerate}
\end{proposition}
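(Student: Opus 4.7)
The plan is a cyclic implication $(1) \Rightarrow (2) \Rightarrow (3) \Rightarrow (1)$ complemented by the separate equivalence $(3) \Leftrightarrow (4)$ via \leref{1.1}. I will work out the $\ol{l}_M$ branch in detail; the $\ol{r}_M$ branch is handled by the same arguments with the obvious left/right swaps and (rm) in place of (lm).

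Implication $(1) \Rightarrow (2)$ is trivial, obtained by specializing to $M = H$. For $(2) \Rightarrow (3)$, I apply the assumed $H$-linearity of $\ol{l}_H$ to the tautology $h = h \cdot 1$. On the one hand $\ol{l}_H(h) = g(1_{(1)}) \ot 1_{(2)} h$ by definition; on the other hand $h \cdot \ol{l}_H(1) = g(h_{(1)} 1_{(1')}) \ot h_{(2)} 1_{(2')}$, since the $H$-action on $H_L^*$ is designed precisely so that $g$ is left $H$-linear. The key simplification is the identity $\Delta(h)\Delta(1) = \Delta(h)$ in $H \ot H$ (a consequence of $\Delta$ being a unital algebra map), which transports to $H_L^* \ot H$ under $g \ot \id$ and collapses the right-hand side to $g(h_{(1)}) \ot h_{(2)}$. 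The implication $(3) \Rightarrow (1)$ uses the same $\Delta(h)\Delta(1) = \Delta(h)$ trick to rewrite $h \cdot \ol{l}_M(m)$ as $g(h_{(1)}) \ot h_{(2)} m$, and then applies $\id \ot a_m$ (with $a_m: H \to M$, $k \mapsto km$) to both sides of (3) to identify this with $\ol{l}_M(hm) = g(1_{(1)}) \ot 1_{(2)} hm$.

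The equivalence $(3) \Leftrightarrow (4)$ is the heart of the statement. Pairing (3) with an arbitrary $k \in H$ in the first tensor slot, via $\lan g(l), k\ran = \lan \varepsilon, kl\ran$, reduces it to $\lan \varepsilon, k 1_{(1)}\ran 1_{(2)} h = \lan \varepsilon, k h_{(1)}\ran h_{(2)}$, which is exactly \equref{1.1.3} and hence equivalent to (lm) by \leref{1.1}. The direction $(3) \Rightarrow (4)$ is then immediate. The reverse $(4) \Rightarrow (3)$ is the step where I anticipate the main obstacle, because equality in $H_L^* \ot H$ is \emph{a priori} stronger than equality of the induced functions $H \to H$ one gets by pairing with all $k \in H$. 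The way to close this gap is to observe that both sides of (3) are already in the image of $g \ot \id$ applied to explicit elements of $H \ot H$, namely $\Delta(1)(1 \ot h)$ and $\Delta(h)$, so the \equref{1.1.3} calculation can be carried out coordinate-free at the level of these pre-images, exploiting the left $H$-linearity of $g: H \to H_L^*$ together with $\Delta(h)\Delta(1) = \Delta(h)$ once more.
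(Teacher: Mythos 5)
Your cycle $(1)\Rightarrow(2)\Rightarrow(3)\Rightarrow(1)$ coincides with the paper's argument, and so does your derivation of $(3)\Rightarrow(4)$ by evaluating the first tensor factor against an arbitrary $k\in H$ and recognizing \equref{1.1.3}. You have also correctly located the only delicate point: $(4)\Rightarrow(3)$ asserts an equality in $H_L^*\ot H$, which is strictly stronger than equality of the induced maps $H\to H$ obtained by pairing with every $k$, since $H$ is not assumed (locally) projective over $k$.

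However, your proposed way of closing that gap does not work as stated. The two pre-images $\Delta(1)(1\ot h)$ and $\Delta(h)$ are in general distinct elements of $H\ot H$, so no computation carried out ``at the level of the pre-images'' can identify their images under $g\ot{\rm id}$; at some point you must invoke an identity of the form $g(a)=g(a')$ with $a\neq a'$. Moreover, the two tools you name --- left $H$-linearity of $g$ and $\Delta(h)\Delta(1)=\Delta(h)$ --- hold in \emph{every} prebialgebra, so combined they can only reprove $(2)\Leftrightarrow(3)$ (indeed $(g\ot{\rm id})(\Delta(h)\Delta(1))=h\cdot\ol{l}_H(1)$ while $(g\ot{\rm id})(\Delta(1)(1\ot h))=\ol{l}_H(h)$, which is just condition $(2)$ again), never something equivalent to (lm). The missing ingredient is $g\circ\ol{\varepsilon}_s=g$, i.e.\ \equref{1.9.1}, which is where (lm) actually enters at the level of $H^*$ rather than of evaluations. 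The paper's chain is
\begin{eqnarray*}
g(h_{(1)})\ot h_{(2)}&=&g(\ol{\varepsilon}_s(h_{(1)}))\ot h_{(2)}
= g(1_{(1)})\ot\ol{\varepsilon}_t(1_{(2)})h\\
&=& g(\ol{\varepsilon}_s(1_{(1)}))\ot 1_{(2)}h= g(1_{(1)})\ot 1_{(2)}h,
\end{eqnarray*}
where the first and last equalities use \equref{1.9.1}, the second writes $g(\ol{\varepsilon}_s(h_{(1)}))\ot h_{(2)}=g(1_{(1)})\ot \lan\varepsilon,1_{(2)}h_{(1)}\ran h_{(2)}$ and applies \equref{1.1.3} in the \emph{second} tensor factor with the fixed element $1_{(2)}$ in place of the variable (so no injectivity of any pairing is needed), and the third is \equref{eq:eps_1}. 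Note the contrast with your $(3)\Rightarrow(4)$ step, where \equref{1.1.3} was reached by evaluating the first factor against all of $H$; that move is exactly what is not reversible here, and your sketch does not supply a substitute for it.
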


\begin{proof}
We only prove the ``left'' part.
$\ul{1)\Rightarrow 2)}$ is trivial.\\
$\ul{2)\Rightarrow 3)}$. If $\ol{l}_H$ is left $H$-linear, then we have for all $h\in H$ that
$g(1_{(1)})\ot 1_{(2)}h=\ol{l}_H(h)=h\cdot\ol{l}_H(1)=
h_{(1)}g(1_{(1)})\ot h_{(2)}1_{(2)}1= g(h_{(1)})\ot h_{(2)}$,
and condition 3) follows.\\
$\ul{3)\Rightarrow 1)}$.
$\ol{l}_M(hm)=g(1_{(1)})\ot 1_{(2)}hm=
g(h_{(1)})\ot h_{(2)}m=h\ol{l}_M(m)$.\\
$\ul{3)\Rightarrow 4)}$. Evaluating the first tensor factor of $g(1_{(1)})\ot 1_{(2)}h=g(h_{(1)})\ot h_{(2)}$
at $k\in H$, we obtain
\begin{equation}\eqlabel{2.2.1}
\lan \varepsilon, k1_{(1)}\ran1_{(2)}h=\lan \varepsilon, kh_{(1)}\ran h_{(2)}.
\end{equation}
Apply $\varepsilon$ to the second tensor factor of \equref{2.2.1}:
\begin{equation}\eqlabel{2.2.2}
\lan \varepsilon, k1_{(1)}\ran\lan\varepsilon,1_{(2)}h\ran=\lan \varepsilon, kh\ran.
\end{equation}
(lm) follows after we
multiply \equref{2.2.1} to the right by $l\in H$, and apply $\varepsilon$ to it:
$$\lan\varepsilon,khl\ran\equal{\equref{2.2.2}}
\lan\varepsilon,k1_{(1)}\ran\lan\varepsilon,1_{(2)}hl\ran=
\lan\varepsilon,kh_{(1)}\ran\lan\varepsilon,h_{(2)}l\ran.$$
$\ul{4)\Rightarrow 3)}$. For all $h\in H$, we have that
\begin{eqnarray*}
&&\hspace*{-2cm}
g(h_{(1)})\ot h_{(2)}\equal{\equref{1.9.1}}
g(\ol{\varepsilon}_s(h_{(1)}))\ot h_{(2)}=
g(1_{(1)})\lan \varepsilon,1_{(2)}h_{(1)}\ran \ot h_{(2)}\\
&\equal{\equref{1.1.3}}& g(1_{(1)})\ot \ol{\varepsilon}_t(1_{(2)})h\equal{ \equref{eq:eps_1}}
g(\ol{\varepsilon}_s(1_{(1)}))\ot 1_{(2)}h\equal{\equref{1.9.1}} g(1_{(1)})\ot 1_{(2)}h.
\end{eqnarray*}
\end{proof}

\begin{proposition}\prlabel{2.3}
Let $H$ be a monoidal prebialgebra. Then $\ol{l}_M:\ M\to H_L^*\ot^l M$
and $\ol{r}_M:\ M\to M\ot^l H_L^*$ are bijective, and their inverses
are respectively the restriction of $l_M$ to $H_L^*\ot^l M$ and the
restriction of $r_M$ to $M\ot^l H_L^*$.
\end{proposition}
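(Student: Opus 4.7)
From \equref{2.0.1} the compositions $l_M\circ\ol l_M$ and $r_M\circ\ol r_M$ already equal $\mathrm{id}_M$, so $\ol l_M,\ol r_M$ are injective. They visibly land in $H_L^*\ot^l M$ respectively $M\ot^l H_L^*$: for instance $\ol l_M(m)=g(1_{(1)})\ot 1_{(2)}m=1\cdot\bigl(g(1)\ot m\bigr)$. What remains is the reverse composite, and my strategy is to show that on the full tensor product $H_L^*\ot M$ one has the equality $\ol l_M\circ l_M=1\cdot(-)$, the action of the unit. Since $\Delta$ is multiplicative, $\Delta(1)=\Delta(1\cdot 1)=\Delta(1)\Delta(1)$, so $1\cdot(-)$ is an idempotent endomorphism of $H_L^*\ot M$; its image is by definition $H_L^*\ot^l M$, where it acts as the identity. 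This will complete the $\ol l_M$ half, and the argument for $\ol r_M$ will be symmetric.

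The central computation, using $f\circ g=\varepsilon_t$, is
$$\ol l_M\bigl(l_M(g(h)\ot m)\bigr)=\ol l_M(\varepsilon_t(h)m)=g(1_{(1)})\ot 1_{(2)}\varepsilon_t(h)m.$$
Applying (rm) via \equref{1.1.1} rewrites $1_{(2)}\varepsilon_t(h)=\lan\varepsilon,(1_{(2)})_{(1)}h\ran(1_{(2)})_{(2)}$, and then coassociativity in the form $(1_{(1)})_{(1)}\ot(1_{(1)})_{(2)}\ot 1_{(2)}=1_{(1)}\ot(1_{(2)})_{(1)}\ot(1_{(2)})_{(2)}$ lets me move the comultiplication onto the first $\Delta(1)$-factor, giving the expression $g\bigl((1_{(1)})_{(1)}\bigr)\lan\varepsilon,(1_{(1)})_{(2)}h\ran\ot 1_{(2)}m$. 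At this point (lm), evaluated by pairing the first tensorand with an arbitrary $k\in H$ as $\lan\varepsilon,k(1_{(1)})_{(1)}\ran\lan\varepsilon,(1_{(1)})_{(2)}h\ran=\lan\varepsilon,k1_{(1)}h\ran$, collapses the first slot to $g(1_{(1)}h)$, and the resulting element is $g(1_{(1)}h)\ot 1_{(2)}m=1\cdot(g(h)\ot m)$, as required. By linearity this identifies $\ol l_M\circ l_M$ with $1\cdot(-)$ on all of $H_L^*\ot M$.

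The argument for $\ol r_M\circ r_M$ is the mirror image: here $r_M(m\ot g(h))=\ol\varepsilon_s(h)m$ since $f'\circ g=\ol\varepsilon_s$, and the two axioms enter with their roles exchanged. Specifically, (lm) in the form \equref{1.1.4} rewrites $1_{(1)}\ol\varepsilon_s(h)$, and then (rm) is used in the dual pairing $\lan\varepsilon,k(1_{(2)})_{(2)}\ran\lan\varepsilon,(1_{(2)})_{(1)}h\ran=\lan\varepsilon,k1_{(2)}h\ran$ to collapse the resulting expression to $1_{(1)}m\ot g(1_{(2)}h)=1\cdot(m\ot g(h))$. The principal obstacle throughout is purely notational: two independent copies of $\Delta(1)$ appear together with a coassociativity rearrangement, and the primed Sweedler convention introduced in the introduction is what keeps the indices straight so that (lm) and (rm) can each be applied at the right moment.
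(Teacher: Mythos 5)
Your proposal is correct and follows essentially the same route as the paper: both composites are computed on generators, the trivial direction being \equref{2.0.1} and the nontrivial one being the identity $\ol l_M\circ l_M=1\cdot(-)$ on $H_L^*\ot M$ (the paper phrases this as $\ol l_M(l_M(g(1_{(1)}h)\ot 1_{(2)}m))=g(1_{(1)}h)\ot 1_{(2)}m$, which is the same statement since $g(1_{(1)}h)\ot 1_{(2)}m=1\cdot(g(h)\ot m)$). The only cosmetic difference is that the paper reaches the key collapse via \prref{2.1}(3) and the inclusion $H_t=I_t$ from \equref{1.2.3}, whereas you use \equref{1.1.1}, coassociativity of $\Delta(1)$ and the raw axiom (lm); both are valid and of comparable length.
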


\begin{proof}
$H_L^*\ot^l M$ is generated by elements of the form $g(1_{(1)}h)\ot 1_{(2)}m$.
We first compute that
$$l_M(g(1_{(1)}h)\ot 1_{(2)}m)= (f\circ g)(1_{(1)}h)1_{(2)}m=
\varepsilon_t(1_{(1)}h)1_{(2)}m=\varepsilon_t(h)m.$$
then
\begin{eqnarray*}
&&\hspace*{-2cm}
\ol{l}_M(\varepsilon_t(h)m)=g(1_{(1)})\ot 1_{(2)}\varepsilon_t(h)m
=
g(\varepsilon_t(h)_{(1)})\ot \varepsilon_t(h)_{(2)}m\\
&\equal{\equref{1.2.3}}&
g(1_{(1)}\varepsilon_t(h))\ot 1_{(2)}m
=  g(1_{(1)}h)\ot 1_{(2)}m, 
\end{eqnarray*}
 where the last equality follows by the identity
$g(hk)=\varepsilon(-hk)\equal{\equref{1.1.1}}
  \varepsilon(-h\varepsilon_t(k))=g(h\varepsilon_t(k))$, for all $h,k\in H$. 
This proves that $\ol{l}_M$ is a left inverse of $l_M$ and we already know from \equref{2.0.1}
that it is a right inverse.
\end{proof}

\begin{theorem}\thlabel{2.4}
Let $H$ be a monoidal prebialgebra.
Then we have a monoidal category $({}_H\Mm,\ot^l,H_L^*,a,l,r)$.
\end{theorem}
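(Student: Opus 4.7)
The plan is to verify the axioms of a monoidal category on $({}_H\Mm, \ot^l, H_L^*, a, l, r)$ step by step, leaning on \prref{2.1} and \prref{2.3} for the unit side.

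First I would check that $\ot^l$ is a well-defined bifunctor on ${}_H\Mm$. Since $\Delta$ is multiplicative, $\Delta(1)$ is an idempotent in $H \ot H$, and the centrality of $1$ in $H$ gives $\Delta(h) \Delta(1) = \Delta(h) = \Delta(1) \Delta(h)$, whence $M \ot^l N = \Delta(1) \cdot (M \ot N)$ is a left $H$-submodule of $M \ot N$ on which $1$ acts as the identity --- i.e.\ a unital left $H$-module. For $H$-linear $f : M \to M'$ and $g : N \to N'$, the map $f \ot g$ is $H$-linear and commutes with $\Delta(1) \cdot -$, hence corestricts to $f \ot^l g : M \ot^l N \to M' \ot^l N'$; bifunctoriality is inherited from $\ot$ on ${}_k\Mm$.

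For the unit constraints, since $H$ is monoidal both (lm) and (rm) hold, so \prref{2.1} yields that $\ol{l}_M$ and $\ol{r}_M$ are $H$-linear, and \prref{2.3} exhibits them as two-sided inverses to the restrictions of $l_M$ and $r_M$ to $H_L^* \ot^l M$ and $M \ot^l H_L^*$. The maps $l_M, r_M$ are themselves $H$-linear from their formulas, and their naturality in $M$ is immediate from ${}_k\Mm$. Hence $l$ and $r$ are natural isomorphisms in ${}_H\Mm$.

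The main obstacle is the associativity constraint. I would take $a_{M,N,P}$ to be the restriction of the $k$-linear associator $a^k : (M \ot N) \ot P \to M \ot (N \ot P)$, which is $H$-linear for the common $\Delta^2$-action on both sides by coassociativity of $\Delta$. What must be verified is that $a^k$ sends $(M \ot^l N) \ot^l P$ bijectively onto $M \ot^l (N \ot^l P)$. Unpacking the definitions, this reduces to proving that the two $k$-subspaces of $M \ot N \ot P$ cut out by the idempotent operators $\Delta^2(1) \cdot (\Delta(1) \ot 1)$ and $\Delta^2(1) \cdot (1 \ot \Delta(1))$ coincide. I would prove this identity in $H^{\ot 3}$ by expanding both operators using coassociativity and $\Delta(1)\Delta(1) = \Delta(1)$, and then applying the counit-based reformulations \equref{1.1.1}--\equref{1.1.4} of (lm) and (rm), together with \equref{eq:eps_1}--\equref{eq:eps_1_tw}, to bring them to a common form; this is the step that uses the monoidality hypothesis on $H$ essentially.

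Once $a$ is in place, its naturality and the pentagon axiom are inherited from ${}_k\Mm$, since both are equalities of $k$-linear maps between iterated tensor products that restrict to the relevant $H$-submodules. For the triangle, writing $\varphi \in H_L^*$ as $\varphi = g(h)$ so that $f'(\varphi) = \ol{\varepsilon}_s(h)$ and $f(\varphi) = \varepsilon_t(h)$, the identity $(r_M \ot^l \mathrm{id}_N) = (\mathrm{id}_M \ot^l l_N) \circ a_{M, H_L^*, N}$ evaluated at $(m \ot g(h)) \ot n$ reduces to $\ol{\varepsilon}_s(h) m \ot n = m \ot \varepsilon_t(h) n$ in $M \ot^l N$, which follows at once from \equref{eq:sbar_t} upon applying $\Delta(1) \cdot -$.
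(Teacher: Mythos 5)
Your proof is correct and follows the same overall architecture as the paper's: the unit constraints are supplied by Propositions \ref{pr:2.1} and \ref{pr:2.3}, and the only axiom needing genuine verification is the triangle. Two points of divergence are worth recording. First, you locate the ``essential use of monoidality'' in the associativity constraint, claiming that the identification of $(M\ot^l N)\ot^l P$ with $M\ot^l (N\ot^l P)$ requires (lm), (rm) and the identities \equref{1.1.1}--\equref{1.1.4}. This is a misattribution: the two idempotents you write down are both equal to the action of $\Delta^2(1)$, since
$(\Delta\ot {\rm id})(\Delta(1))\cdot(\Delta(1)\ot 1)=\Delta(1_{(1)})\Delta(1)\ot 1_{(2)}=\Delta(1_{(1)})\ot 1_{(2)}=\Delta^2(1)$
and symmetrically for the other, using only multiplicativity of $\Delta$ and coassociativity. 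Hence $\ot^l$ is associative for an \emph{arbitrary} prebialgebra, as the paper asserts in the paragraph preceding \prref{2.1}; the monoidality axioms enter only through the unit constraints. Your argument still succeeds --- the identity you set out to prove is true --- you would simply find that (lm) and (rm) are not needed at that step. Second, for the triangle you verify $r_M\ot{\rm id}=({\rm id}\ot l_N)\circ a$ directly, which after applying $\Delta^2(1)$ reduces to $1_{(1)}\ol{\varepsilon}_s(h)m\ot 1_{(2)}n=1_{(1)}m\ot 1_{(2)}\varepsilon_t(h)n$, i.e.\ \equref{eq:sbar_t}; the paper instead checks the equivalent identity on the inverses, $\ol{r}_M\ot{\rm id}={\rm id}\ot\ol{l}_N$, which needs only coassociativity and $\Delta(1)\Delta(1)=\Delta(1)$ and so avoids invoking \equref{eq:sbar_t}. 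Both routes are valid; the paper's is marginally more economical.
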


\begin{proof}
It is clear that $\ol{l}$ and $\ol{r}$ are natural, and then the restrictions of
$l$ and $r$ are also natural, by \prref{2.3}. To finish the proof, it suffices to show
that the triangle axiom \cite[XI.(2.9)]{K} holds. This can be seen as follows:
for $M,N\in {}_H\Mm$ and $1_{(1)}m\ot 1_{(2)}n\in M\ot^l N$, we have that
$
\ol{r}_M(1_{(1)}m)\ot 1_{(2)}n=
1_{(1')}1_{(1)}m\ot g(1_{(2')})\ot 1_{(2)}n
= 1_{(1)}m\ot g(1_{(2)})\ot 1_{(3)}n
= 1_{(1)}m\ot  g  (1_{(1')})\ot 1_{(2')}1_{(2)}n
1_{(1)}m\ot \ol{l}_N(1_{(2)}n)$.
\end{proof}

The map $\pi:\ M\ot N\to M\ot^l N$, $\pi (m\ot n)=1_{(1)}m\ot 1_{(2)}n$ is
a projection. Define $M\ot_l N=M\ot N/\Ker({\rm id}-\pi)$. Then the map
$$\ol{\pi}:\ M\ot_l N\to M\ot^l N,~~\ol{\pi}([m\ot n])=\pi(m\ot n)$$
is a well-defined isomorphism of $k$-modules. The left $H$-action on
$M\ot^l N$ can be transported to a left $H$-action on $M\ot_l N$, which is
given by the usual formula $h\cdot [m\ot n]=[h_{(1)}m\ot h_{(2)}n]$.
This means that the functors $\ot^l$ and $\ot_l$ are isomorphic, so - in the case
where $H$ is monoidal - the
monoidal structure on ${}_H\Mm$ can be described by $\ot_l$.\\
 
From Lemmas \ref{le:1.2} and \ref{le:1.7}, we know that if $H$ is a monoidal
prebialgebra then $H_L^*=H_t^*$ and $f:\ H_t^*\to H_t$ is an isomorphism with
inverse $g$. So we can transport the left $H$-action on $H_L^*$ to $H_t$,
making $H_t$ the unit object of ${}_H\Mm$. This transported action 
is easily computed: for $h\in H$, $z\in H_t$, we find $h.
z=f(h\cdot
g(z))=(f\circ g)(hz)=\varepsilon_t(hz)$.\\ 
Recall that any monoidal functor $U:(\mathcal M,\ot, I)\to (\mathcal N,
\times, J)$ factorizes uniquely through a monoidal functor from $\Mm$ to the
bimodule category of the monoid $U(I)$ in $\mathcal N$ via the forgetful
functor ${}_{U(I)}\mathcal N_{U(I)}\to \mathcal N$. In \cite{Sz05} a monoidal
functor $U$ was termed {\em essentially strong monoidal} whenever the functor
$\mathcal M \to {}_{U(I)}\mathcal N_{U(I)}$ in its factorization is
strong monoidal. There is an evident dual notion of {\em essentially strong
opmonoidal} functor. 

\begin{theorem}\thlabel{thm:forgetful}
Let $H$ be a monoidal prebialgebra. Then the forgetful functor $U:{}_H\mathcal
M\to {}_k\Mm$ is monoidal and opmonoidal. If any of (rc) and (lc) holds,
then it is essentially strong monoidal and  essentially strong opmonoidal.
\end{theorem}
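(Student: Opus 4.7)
The plan is to exhibit explicit (op)monoidal structures on $U$ and, under the extra axiom, to identify $\ot^l$ with a (co)tensor product over $H_t$.

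For the monoidal structure, take $U_2:U(M)\ot U(N)\to U(M\ot^l N)$ to be the projection $\pi$, i.e.\ $m\ot n\mapsto 1_{(1)}m\ot 1_{(2)}n$, and $U_0:k\to U(H_L^*)$ to be $1\mapsto\varepsilon$ (equivalently $1\mapsto 1_H\in H_t$ under \leref{1.7}). Naturality is immediate from $\Delta$ being a $k$-algebra morphism. The compatibility with the associativity constraints amounts to the fact that both composites $U_2\circ(U_2\ot\mathrm{id})$ and $U_2\circ(\mathrm{id}\ot U_2)$ send $m\ot n\ot p$ to $1_{(1)}m\ot 1_{(2)}n\ot 1_{(3)}p$ by coassociativity and the idempotency of $\Delta(1)\in H\ot H$; the unit triangles are precisely the identities established in the proofs of \prref{2.1} and \prref{2.3}. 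For the opmonoidal structure take $U^2$ to be the inclusion $M\ot^l N\hookrightarrow M\ot N$ and $U^0:\varphi\mapsto\varphi(1)$; associativity is trivial, and the unit diagrams reduce to $\varepsilon(1_{(1)})1_{(2)}=1_H$ together with the definitions of $\ol l_M$ and $\ol r_M$.

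For the essentially strong part assume (rc); the case (lc) is entirely symmetric. By \leref{1.5}, $\Delta(1)\in H_s\ot H_t$. Using the prebialgebra axioms and the idempotency of $\Delta(1)$ in $H\ot H$, together with \equref{1.9.2} to handle $\ol\varepsilon_s\circ\varepsilon_t$, one finds that the $H_t$-bimodule structure on $U(M)$ induced by the monoidal structure of $U$ is $z\cdot m=zm$ and $m\cdot z=\ol\varepsilon_s(z)m$ for $z\in H_t$, $m\in M$. Equation \equref{eq:sbar_t} then expresses exactly that $\pi$ coequalizes these two actions, inducing a natural map $\widetilde U_2:U(M)\ot_{H_t}U(N)\to U(M\ot^l N)$. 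Its inverse is the composite $M\ot^l N\hookrightarrow M\ot N\twoheadrightarrow M\ot_{H_t}N$: on a generator $1_{(1)}m\ot 1_{(2)}n$, the membership $1_{(2)}\in H_t$ permits moving $1_{(2)}$ across $\ot_{H_t}$, giving $[(1_{(1)}m)\cdot 1_{(2)}\ot n]=[m\ot n]$, the cancellation $\ol\varepsilon_s(1_{(2)})1_{(1)}=1_H$ being a direct consequence of $\Delta(1)\Delta(1)=\Delta(1)$. An entirely dual argument, using the $H_t$-bicomodule structure on $U(M)$ induced by the opmonoidal structure and the coalgebra structure on $H_t$ supplied by \prref{1.12}, identifies $M\ot^l N$ with the cotensor product $U(M)\,\sq_{H_t}\,U(N)$ and yields the essentially strong opmonoidal property.

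The setting-up of the structure maps and the coherence checks are routine consequences of the lemmas of \seref{1} and the current section. The main obstacle is the invertibility of $\widetilde U_2$ and of its cotensor-product counterpart: both require the axiom (rc) or (lc) in an essential way, since without it $1_{(2)}$ need not lie in $H_t$ (nor $1_{(1)}$ in $H_s$), and the step that collapses $1_{(1)}m\ot 1_{(2)}n$ back to $m\ot n$ across $\ot_{H_t}$ becomes unavailable.
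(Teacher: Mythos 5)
Your proposal is correct and follows essentially the same route as the paper's proof: the same monoidal/opmonoidal structure maps (the projection $\pi$ with $1\mapsto\varepsilon$, and the inclusion with evaluation at $1$), the same induced $H_t$-bimodule actions $z\triangleright m=zm$, $m\triangleleft z=\ol\varepsilon_s(z)m$ and dual bicomodule coactions, and the same factorization through ${}_{H_t}\Mm_{H_t}$ resp.\ ${}^{H_t}\Mm^{H_t}$. In particular your key step — using $\Delta(1)\in H_s\ot H_t$ from \leref{1.5} to slide $1_{(2)}$ across $\ot_{H_t}$ and the cancellation $\ol\varepsilon_s(1_{(2)})1_{(1)}=1$ coming from $\Delta(1)\Delta(1)=\Delta(1)$ — is exactly the computation the paper performs for $S_{M,N}\circ S^{M,N}$.
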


\begin{proof}
The monoidal structure of $U$ is given by 
$$
U^{M,N}:   M\ot^l N  \xymatrix{\ \ar@{>->}[r]&M\ot N};\qquad 
U^0:\xymatrix{H^*_L\ar[r]^-{\lan-,1\ran}&k}
$$

and the opmonoidal structure of $U$ is given by 
$$
U_{M,N}:\xymatrix{M\ot N\ar[r]^-\pi&M\ot^l N};\qquad 
U_0:\xymatrix{k\ar[r]^-{(-)\varepsilon}&H_L^*}
$$
for any left $H$-modules $M$ and $N$. Note that $U_{M,N}\circ U^{M,N}$ is the
identity map. These monoidal and opmonoidal structures render 
$U$ with the structure of a separable Frobenius monoidal functor if and only
if $H$ is a weak bialgebra. \\ 
The monoidal structure of $U$ induces $H_t$-bimodule actions on any left
$H$-module $M$. Their explicit form comes out as
$$
z\triangleright m = z.m\qquad \text{and}\qquad 
m\triangleleft z=\ol\varepsilon_s(z).m.
$$
(Associativity of the right action follows directly by \prref{1.11} and both
actions commute in light of \equref{eq:sbar_t}). The opmonoidal structure of
$U$ induces $H_t$-bicomodule coactions on any left $H$-module $M$. Explicitly, 
$$
m\mapsto 1_{(1)}.m\ot\varepsilon_t(1_{(2)})\qquad \text{and}\qquad 
m\mapsto \varepsilon_t(1_{(1)}) \ot 1_{(2)}.m.
$$
(Commutativity of these coactions can be verified also directly by 
\begin{eqnarray*}
&&\hspace{-15mm}\varepsilon_t(1_{(1)})\ot 1_{(2)}1_{(1')}\ot\varepsilon_t(1_{(2')})=
1_{(2)}\ot\ol\varepsilon_t(1_{(1)})\varepsilon_s(1_{(1')})\ot 1_{(2')}\\
&\equal{\equref{eq:tbar_s}}&
1_{(2)}\ot\varepsilon_s(1_{(1')})\ol\varepsilon_t(1_{(1)})\ot 1_{(2')}=
\varepsilon_t(1_{(1)})\ot1_{(1')}1_{(2)}\ot\varepsilon_t(1_{(2')}),
\end{eqnarray*}
where the first and last equalities follow by \leref{lem:eps_1}. By similar
steps also coassociativity of the coactions can be checked directly.)\\
The monoidal functor $U:{}_H\Mm\to {}_k\Mm$ factorizes through a monoidal
functor $S:{}_H\Mm\to {}_{H_t}\Mm_{H_t}$. The binary part of the monoidal
structure of $S$ comes out as  
$$
S^{M,N}:M\ot_{H_t} N\to M\ot^l N,\qquad m \ot_{H_t} n \mapsto 1_{(1)}.m \ot 1_{(2)}.n .
$$
(One can check directly that it is well-defined by \equref{eq:sbar_t}.)
In order to prove essentially strong monoidality of $U$, we need to construct
the inverse of $S^{M,N}$.
As a candidate, consider the restriction $S_{M,N}:M\ot^l N\to M\ot_{H_t} N$
of the canonical epimorphism $M\ot N\to M\ot_{H_t} N$,
$$
S_{M,N}\bigl(1_{(1)}.m \ot 1_{(2)}.n\bigr)= 1_{(1)}.m \ot_{H_t} 1_{(2)}.n.
$$
Clearly, for any monoidal prebialgebra $H$, $S^{M,N}\circ S_{M,N}$ is the
identity map. On the other hand, $(S_{M,N}\circ S^{M,N})(m  \ot_{H_t} n)=
1_{(1)}.m \ot_{H_t} 1_{(2)}.n$. By \equref{1.5.1}, (rc) implies $1_{(1)}\ot
1_{(2)}\in H\ot H_t$ hence  
$$
1_{(1)}.m \ot_{H_t} 1_{(2)}.n=
\ol\varepsilon_s(1_{(2)})1_{(1)}.m \ot_{H_t} n=
m \ot_{H_t} n,
$$
where the second equality follows by $\ol\varepsilon_s(1_{(2)})1_{(1)}=\lan
\varepsilon,1_{(2')}1_{(2)}\ran 1_{(1')}1_{(1)}=1$. Similarly, by
\equref{1.5.1}, (lc) implies $1_{(1)}\ot 1_{(2)}\in \ol H_s\ot H$ hence 
$$
1_{(1)}.m \ot_{H_t} 1_{(2)}.n=
m \ot_{H_t} \varepsilon_t(1_{(1)})1_{(2)}.n=
m \ot_{H_t} n.
$$
This proves that whenever (rc) or (lc) holds, $S^{M,N}$ and $S_{M,N}$ are mutual
inverses. \\
By \leref{lem:eps_1} and by the second equality in \equref{eq:tbar_s},
\begin{eqnarray*}
&&\hspace{-2cm}1_{(1')}1_{(1)}\ot \varepsilon_t(1_{(2')})\ot 1_{(2)}=
\varepsilon_s(1_{(1')})1_{(1)}\ot 1_{(2')}\ot 1_{(2)}\\
&=&1_{(1)}\ot 1_{(2')}\ot \ol\varepsilon_t(1_{(1')})1_{(2)}=
1_{(1)}\ot\varepsilon_t(1_{(1')})\ot 1_{(2')}1_{(2)}.
\end{eqnarray*}
This means that $1_{(1')}1_{(1)}.m\ot \varepsilon_t(1_{(2')})\ot
1_{(2)}.n=1_{(1)}.m\ot\varepsilon_t(1_{(1')})\ot 1_{(2')}1_{(2)}.n$ for all
$m\in M$ and $n\in N$, so that $M\ot^l N$ is a subspace of the $H_t$-comodule
tensor product $M\ot^{H_t} N$.\\
The opmonoidal functor $U:{}_H\Mm\to {}_k\Mm$ factorizes through an opmonoidal
functor ${}_H\Mm\to {}^{H_t}\Mm^{H_t}$. The binary part of the opmonoidal
structure of the latter functor comes out as the evident inclusion map 
$
M\ot^l N \to M\ot^{H_t} N
$.
Thus in order to prove essentially strong opmonoidality of $U$, we need to
show that $M\ot^{H_t} N$ and $M\ot^l N$ are coinciding subspaces of $M\ot N$.
Since $\sum m^i \ot n^i\in M\ot^{H_t} N$, $\sum 1_{(1)}.m^i \ot
\varepsilon_t(1_{(2)}).n^i=\sum m^i \ot \varepsilon_t(1_{(1)})1_2.n^i=\sum m^i
\ot n^i$; and $\sum \ol\varepsilon_s(1_{(1)}).m^i \ot 1_{(2)}.n^i=\sum
\ol\varepsilon_s(1_{(2)})1_1.m^i \ot  n^i=\sum m^i \ot n^i$. This shows that,
in light of \equref{1.5.1}, any of (rc) and (lc) implies that $M\ot^{H_t} N$
and $M\ot^l N$ coincide, so that $U$ is essentially strong opmonoidal. 
\end{proof}
A particular consequence of this theorem is that, for a monoidal
prebialgebra $H$ for which any of (rc) and (lc) holds, and for any left
$H$-modules $M$ and $N$, there are isomorphisms (of $H$-modules)
$M\ot_l N\cong M\ot^l N\cong M\ot_{H_t} N\cong M\ot^{H_t} N$.\\
It also follows by \thref{thm:forgetful} that a monoidal prebialgebra $H$ for
which any of (rc) and (lc) holds, is a left bialgebroid over $H_t$ (and in
fact also a right bialgebroid over $H_s$) -- some of the axioms occurred in 
\equref{eq:sbar_t}.

\section{The corepresentation category}\selabel{3}
In \seref{2}, we have seen that the category of modules over a monoidal prebialgebra
is monoidal.
Throughout this section, we assume that $H$ is a comonoidal prebialgebra , that is,
it satisfies condition (c). 
Then $I_s=H_s=H_R=\ol{I}_s=\ol{H}_s$, by \leref{1.2}. For $y\in  H_s$, we have
that $\Delta(y)=1_{(1)}\ot  y  1_{(2)} =1_{(1)}\ot 1_{(2)}y\in
   H_s\ot H$, by \equref{1.5.1}, hence $ H_s$ is a right $H$-comodule.\\
Suppose that we have a coassociative $H$-coaction on $M$, which is not necessarily
counital; then we also have an associative $H^*$-action on $M$, given by
$h^*\cdot m=\lan \varepsilon, m_{[1]}\ran m_{[0]}$. If $M,N\in \Mm^H$, then
$M\ot N$ has a coassociative $H$-coaction given by
$$\rho(m\ot n)=m_{[0]}\ot n_{[0]}\ot m_{[1]}n_{[1]}.$$
Now define
$M\ot^r N=\varepsilon\cdot (M\ot N)$, which is the submodule of $M\ot N$ generated by
elements of the form $\lan \varepsilon,m_{[1]}n_{[1]}\ran m_{[0]}\ot n_{[0]}$. It is then
easy to show that $M\ot^r N$ is a counital right $H$-subcomodule of $M\ot N$.
Then $\ot^r$ is an associative tensor product on $\Mm^H$, and the associativity
constraint is trivially induced by the associativity on ${}_k\Mm$. Now consider the
maps
\begin{eqnarray*}
l_M:\  H_s\ot M\to M&;&l_M(y\ot m)=g'(y)\cdot m= \lan \varepsilon, ym_{[1]}\ran m_{[0]};\\
\ol{l}_M:\ M\to  H_s\ot^r M&;& \ol{l}_M(m)=f'(\varepsilon_{(2)})\ot \varepsilon_{(1)}\cdot m=
\ol{\varepsilon}_s(m_{[1]})\ot m_{[0]};\\
r_M:\ M\ot  H_s\to M&;& r_M(m\ot y)= g(y)\cdot m= \lan \varepsilon, m_{[1]}y\ran m_{[0]};\\
\ol{r}_M:\ M\to M\ot^r  H_s&;& \ol{r}_M(m)=\varepsilon_{(2)}\cdot m\ot f'(\varepsilon_{(1)})
=m_{[0]}\ot \varepsilon_s(m_{[1]}).
\end{eqnarray*}
The description of $\ol{l}$ and $\ol{r}$ in terms of $\varepsilon_{(1)}$ and $\varepsilon_{(2)}$
only makes sense if $H^*$ is locally projective; the other description holds in general.

\begin{theorem}\thlabel{3.1}
Let $H$ be a comonoidal prebialgebra.
Then we have a monoidal category $(\Mm^H,\ot^r, H_s,a,l,r)$.
\end{theorem}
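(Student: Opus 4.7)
The plan is to proceed in complete analogy with \thref{2.4}, dualizing every step of Section 2. Since $H$ is comonoidal it satisfies both (lc) and (rc), so by \leref{1.2} and \equref{1.2.7} we have $H_s=\ol H_s=I_s=\ol I_s=H_R$, and by \equref{1.5.1} $\Delta(1)\in H_s\ot H_t\cap \ol H_s\ot \ol H_t$. Associativity of $\ot^r$ and the pentagon axiom are inherited from ${}_k\Mm$, since $\ot^r$ is defined as a $k$-submodule of the ordinary tensor product with the obvious associator; so the work lies entirely in the unit constraints and the triangle axiom.

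First, I would establish the analogue of \prref{2.1}: for any $M\in \Mm^H$, both $\ol l_M$ and $\ol r_M$ are $H$-colinear, and their images land in $H_s\ot^r M$ and $M\ot^r H_s$ respectively. Colinearity of $\ol l_M$ should reduce to the identity $\ol \varepsilon_s(h_{(1)})\ot h_{(2)}=1_{(1)}\ot 1_{(2)}h$ of \equref{1.1.8} (guaranteed by (lc)), applied after the coassociativity of the coaction on $M$; the analogous identity \equref{1.1.6} handles $\ol r_M$ via (rc). That the image lies in the counital part $\ot^r$ is immediate once one observes $\ol\varepsilon_s(m_{[1]})\ot m_{[0]}=1_{(1)}\ot 1_{(2)}\cdot m$ (using again \equref{1.1.8}) and checks that applying $\varepsilon\cdot(-)$ to this element returns it, which amounts to the counit axiom combined with (lc).

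Next, I would prove the analogue of \prref{2.3}: the restrictions of $l_M$ to $H_s\ot^r M$ and of $r_M$ to $M\ot^r H_s$ are two-sided inverses of $\ol l_M$ and $\ol r_M$. One direction, $l_M\circ \ol l_M=\mathrm{id}_M$, follows directly from the counit property together with $\lan\varepsilon,\ol\varepsilon_s(m_{[1]})m_{[2]}\ran=\lan\varepsilon,m_{[1]}\ran$ (a consequence of \equref{1.1.4}). The other direction, $\ol l_M\circ l_M=\mathrm{id}$ on $H_s\ot^r M$, is the delicate one and should parallel the computation in \prref{2.3}: one rewrites $\ol l_M(\ol \varepsilon_s(y)\cdot m)$, uses $y\in H_s=I_s$ so that $\Delta(y)=1_{(1)}\ot y 1_{(2)}$, and invokes \equref{1.1.8} together with the identity for $\ol \varepsilon_s$ under (lc) to recover the original element. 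The naturality of $l,\ol l,r,\ol r$ is automatic from the definitions.

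Finally, the triangle axiom is verified by a direct calculation dual to the one at the end of \thref{2.4}. Given $m\ot n\in M\ot^r N$ (so a typical element is $\lan\varepsilon,m_{[1]}n_{[1]}\ran m_{[0]}\ot n_{[0]}$), one computes $\ol r_M(m)\ot n$ and $m\ot\ol l_N(n)$ and shows they agree modulo the associator; this reduces to the identity $\varepsilon_s(m_{[1]})\ot \ol\varepsilon_s(n_{[1]})\ot\cdots=\cdots$ which is a direct consequence of the first equality in \equref{eq:eps_1} together with coassociativity of $\rho$. The main obstacle I anticipate is bookkeeping in this last step: making sure the various Sweedler indices of $\Delta(1)$, $\varepsilon_s$, and $\ol\varepsilon_s$ are correctly combined using both (lc) and (rc). Once these manipulations are carried out, all the data of a monoidal category is in place and the theorem follows.
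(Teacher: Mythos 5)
Your proposal follows essentially the same route as the paper's proof: dualize \seref{2}, show the unit constraints are colinear mutual inverses using \equref{1.1.8} and \equref{1.1.6}, and verify the triangle axiom by the same Sweedler-index manipulations with (lc) and (rc). The only slip is your appeal to \equref{1.1.4} (an (lm) statement, not available under hypothesis (c)) for $l_M\circ\ol{l}_M=\mathrm{id}$; that step actually needs only multiplicativity of $\Delta$ and the counit axiom, which is what the paper tacitly uses.
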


\begin{proof}
We first show that $\ol{l}_M$ is right $H$-colinear. For all $m\in M$, we have
\begin{eqnarray*}
&&\hspace*{-15mm}
\rho(\ol{l}_M(m))=\rho(\ol{\varepsilon}_s(m_{[1]})\ot m_{[0]})
=1_{(1)}\ot m_{[0]}\ot 1_{(2)}\ol{\varepsilon}_s(m_{[2]})m_{[1]}\\
&=&1_{(1)}\ot m_{[0]}\ot 1_{(2)}1_{(1')}m_{[1]}\lan \varepsilon,1_{(2')}m_{[2]}\ran
=1_{(1)}\ot m_{[0]}\ot  1_{(2)}m_{[1]}\\
&\equal{\equref{1.1.8}}&
 \ol{\varepsilon}_s(m_{[1]})\ot m_{[0]}\ot m_{[2]}=\ol{l}(m_{[0]})\ot m_{[1]}.
 \end{eqnarray*}
 We now show that the restriction of $l_M$ to $ H_s\ot^r M$ is the inverse of
 $\ol{l}_M$. It is easy to see that $(l_M\circ \ol{l}_M)(m)=m$, for all $m\in M$.
 $ H_s\ot^r M$ is generated by elements of the form $\lan\varepsilon,y_{(2)}m_{[1]}\ran
 y_{(1)}\ot m_{[0]}$. Now
 \begin{eqnarray*}
 &&\hspace*{-1cm}
 l_M(\lan\varepsilon,y_{(2)}m_{[1]}\ran y_{(1)}\ot m_{[0]})
=\lan \varepsilon, y_{(1)}m_{[1]}\ran\lan \varepsilon, y_{(2)}m_{[2]}\ran m_{[0]}
= \lan \varepsilon, y m_{[1]}\ran m_{[0]}\\
&&\hspace*{-1cm}
\ol{l}_M(\lan \varepsilon, y m_{[1]}\ran m_{[0]})
= \lan \varepsilon, y m_{[2]}\ran \ol{\varepsilon}_s(m_{[1]})\ot m_{[0]}\\
&=& \lan \varepsilon, y m_{[2]}\ran  \lan \varepsilon, 1_{(2)}m_{[1]}\ran 1_{(1)} \ot m_{[0]}\\
&=& \lan \varepsilon, y 1_{(2')}m_{[2]}\ran  \lan \varepsilon, 1_{(2)}1_{(1')}m_{[1]}\ran 1_{(1)} \ot m_{[0]}\\
&\equal{\rm (lc)}&
\lan \varepsilon, y 1_{(3)}m_{[2]}\ran  \lan \varepsilon, 1_{(2)}m_{[1]}\ran 1_{(1)} \ot m_{[0]}
= \lan \varepsilon, y 1_{(2)}m_{[1]}\ran 1_{(1)} \ot m_{[0]}\\
&=& 
 \lan\varepsilon,y_{(2)}m_{[1]}\ran y_{(1)}\ot m_{[0]}.
\end{eqnarray*}
Let us now show that the triangle axiom holds.
\begin{eqnarray*}
&&\hspace*{-2cm}
(M\ot \ol{l}_N)(\lan \varepsilon, m_{[1]} n_{[1]}\ran m_{[0]}\ot n_{[0]})
=\lan \varepsilon, m_{[1]} n_{[2]}\ran m_{[0]}\ot \ol{\varepsilon}_s(n_{[1]})\ot n_{[0]}\\
&\equal{\equref{1.1.8}}&
\lan \varepsilon, m_{[1]} 1_{(2)} n_{[1]}\ran m_{[0]}\ot 1_{(1)}\ot n_{[0]}\\
&\equal{\equref{1.1.6}}&
\lan \varepsilon, m_{[2]} n_{[1]}\ran m_{[0]}\ot \varepsilon_s(m_{[1]})\ot n_{[0]}\\
&=& (\ol{r}_M\ot N)(\lan \varepsilon, m_{[1]} n_{[1]}\ran m_{[0]}\ot n_{[0]}).
\end{eqnarray*}
\end{proof}

For $M,N\in \Mm^H$, let $M\ot_r N=(M\ot N)/\Ker({\rm id}-\pi)$, where $\pi(m\ot n)=
\varepsilon\cdot (m\ot n)$. $\pi$ induces a natural isomorphism between $\ot_r$ and
$\ot^r$, the arguments are similar to the arguments following \thref{2.4}.

\begin{theorem}\thlabel{thm:forgetful_co}
Let $H$ be a comonoidal prebialgebra. Then the forgetful functor $U:\mathcal M^H
\to {}_k\Mm$ is monoidal and opmonoidal. If any of (rm) and (lm) holds,
then it is essentially strong monoidal and  essentially strong opmonoidal.
\end{theorem}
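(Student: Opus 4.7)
The plan is to dualize the proof of \thref{thm:forgetful} systematically, exchanging the module tensor product $\ot^l$ for the comodule tensor product $\ot^r$ of \seref{3} and the role of $H_t$ for that of $H_s$. Under the standing assumption (c) we have $H_s=\ol H_s=H_R=I_s$, and $H_s$ is a $k$-algebra with unit $1_H$ (since $\varepsilon_s(1)=1$) and also a $k$-coalgebra via $\Delta_s$ from \prref{1.12}.

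First I would exhibit the relevant structure on $U:\Mm^H\to{}_k\Mm$. The monoidal structure has binary part the inclusion $U^{M,N}:M\ot^r N\hookrightarrow M\ot N$ and unit part $U^0=\lan\varepsilon,-\ran:H_s\to k$; the opmonoidal structure has binary part the projection $U_{M,N}=\pi:M\ot N\to M\ot^r N$ and unit part $U_0:k\to H_s$, $\alpha\mapsto\alpha\cdot 1$. Verification of the (op)monoidal axioms is dual to, and formally the same length as, the calculation for \thref{thm:forgetful}, and uses only (c). Since $U_{M,N}\circ U^{M,N}=\mathrm{id}$, the two structures make $U$ separable Frobenius precisely when $H$ is a weak bialgebra.

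Next I would identify the $H_s$-bimodule structure on $M\in\Mm^H$ coming from the monoidal structure, namely $y\triangleright m=\lan\varepsilon,ym_{[1]}\ran m_{[0]}$ and $m\triangleleft y=\lan\varepsilon,m_{[1]}y\ran m_{[0]}$, together with the analogous $H_s$-bicomodule structure coming from the opmonoidal structure. These yield factorizations $S:\Mm^H\to{}_{H_s}\Mm_{H_s}$ and $S':\Mm^H\to{}^{H_s}\Mm^{H_s}$ whose binary parts are $S^{M,N}:M\ot_{H_s}N\to M\ot^r N$ (induced by $\pi$) and the inclusion $S'_{M,N}:M\ot^r N\hookrightarrow M\ot^{H_s} N$ inside $M\ot N$, respectively. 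In either case for comonoidal $H$ one direction (either $S^{M,N}\circ S_{M,N}=\mathrm{id}$ or the inclusion being well defined) comes for free.

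The essential point is that (rm) or (lm) forces $S^{M,N}$ and $S'_{M,N}$ to be bijective. For $S^{M,N}$ I would exhibit the candidate inverse as the restriction to $M\ot^r N$ of the canonical surjection $M\ot N\to M\ot_{H_s}N$ and verify that the two composites are identities; for $S'_{M,N}$ I would verify directly that the subspaces $M\ot^r N$ and $M\ot^{H_s} N$ of $M\ot N$ coincide. The main obstacle, dual to the key step of \thref{thm:forgetful}, is to replace the use of \equref{1.5.1} (which under (rc) placed $\Delta(1)\in H_s\ot H_t$) by its dual: under (rm), \prref{1.6} gives $\Delta(\varepsilon)\in H_s^*\ot H_t^*$, and the symmetric statement holds under (lm). This provides precisely the cancellation exploited in the module proof, once one supplements it with the bookkeeping identities of \leref{lem:eps_1} together with \equref{eq:sbar_t} and \equref{eq:tbar_s}.
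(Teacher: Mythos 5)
Your overall architecture coincides with the paper's: the same (op)monoidal structures on $U$, the same factorizations through ${}_{H_s}\Mm_{H_s}$ and ${}^{H_s}\Mm^{H_s}$, the same candidate inverse $S_{M,N}$, and the same reduction to showing that $S_{M,N}\circ S^{M,N}=\mathrm{id}$ and that $M\ot^r N$ and $M\ot^{H_s}N$ coincide inside $M\ot N$. The problem lies in the tool you propose for the decisive cancellation. You want to dualize \equref{1.5.1} by invoking \prref{1.6} to place $\Delta(\varepsilon)$ in $H_s^*\ot H_t^*$. But \prref{1.6} carries the standing hypothesis that $H^*$ is locally projective as a $k$-module (it is needed even to make sense of $\Delta(\varepsilon)$ as an element of $H^*\ot H^*$ via the injection $\iota$ of \leref{locproj}), and \thref{thm:forgetful_co} assumes nothing of the sort --- removing such finiteness/projectivity hypotheses is precisely the point of the paper. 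The same caveat is made explicitly in \seref{3}, where the descriptions of $\ol l_M$ and $\ol r_M$ in terms of $\varepsilon_{(1)}\ot\varepsilon_{(2)}$ are flagged as meaningful only under local projectivity, while the general argument must use the alternative descriptions through $\ol\varepsilon_s$ and $\varepsilon_s$.

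The repair is the one the paper actually carries out: instead of a formal element $\Delta(\varepsilon)$, use the elementwise identities \equref{1.1.2} (equivalent to (rm)) and \equref{1.1.4} (equivalent to (lm)). For instance, under (rm), $\lan\varepsilon,m_{[1]}n_{[1]}\ran m_{[0]}\ot_{H_s}n_{[0]}=\lan\varepsilon,\varepsilon_s(m_{[1]})n_{[1]}\ran m_{[0]}\ot_{H_s}n_{[0]}=m_{[0]}\triangleleft\varepsilon_s(m_{[1]})\ot_{H_s}n=m\ot_{H_s}n$, where the middle step moves $\varepsilon_s(m_{[1]})\in H_s$ across $\ot_{H_s}$; the analogous computation with \equref{1.1.4} handles (lm), and the identification $M\ot^rN=M\ot^{H_s}N$ is obtained the same way. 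A secondary slip: the ``bookkeeping identities'' you cite, \equref{eq:sbar_t} and \equref{eq:tbar_s}, are proved only for \emph{monoidal} prebialgebras, i.e.\ they require both (lm) and (rm), whereas here you have (c) plus only one of (rm), (lm). The identities actually needed on the comonoidal side are \equref{1.1.6}, \equref{1.1.8} and the equalities $I_s=H_s=\ol H_s=\ol I_s$ from \leref{1.2}, which give the $H_s$-linearity of the coactions and hence the well-definedness of $S^{M,N}$.
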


\begin{proof}
The monoidal structure of $U$ is given by 
$$
U^{M,N}:\xymatrix{M\ot^r N\ \ar@{>->}[r]&M\ot N};\qquad
U^0:\xymatrix{H_s\ar[r]^-{\lan\varepsilon,-\ran}&k}
$$
and the opmonoidal structure is given by 
$$
U_{M,N}:\xymatrix{M\ot N\ar[r]^-\pi&M\ot^r N};\qquad 
U_0:\xymatrix{k\ar[r]^-{(-)1}&H_s},
$$
for any right $H$-comodules $M$ and $N$. Note that $U_{M,N}\circ
U^{M,N}$ is the identity map. These monoidal and opmonoidal structures
make $U$ a separable Frobenius monoidal functor if and only if $H$ is a weak
bialgebra.\\ 
The monoidal structure of $U$ induces $H_s$-bimodule actions on any right $H$-comodule $M$
$$
y\triangleright m=\lan\varepsilon,ym_{[1]}\ran m_{[0]}\qquad\text{and}\qquad
m\triangleleft y= \lan\varepsilon,m_{[1]}y\ran m_{[0]}
$$
and the opmonoidal structure of $U$ induces $H_s$-bicomodule coactions
$$
m\mapsto m_{[0]}\ot\varepsilon_s(m_{[1]})\qquad\text{and}\qquad
m\mapsto\ol\varepsilon_s(m_{[1]})\ot m_{[0]}.
$$
The monoidal functor $U:\Mm^H\to {}_k\Mm$ factorizes through a monoidal functor
$S:{}_H\Mm\to {}_{H_s}\Mm_{H_s}$. The binary part of the monoidal structure of
$S$ comes out as  
$$
S^{M,N}:M\ot_{H_s} N\to M\ot^r N,\qquad m\ot_{H_s} n \mapsto\lan\varepsilon,m_{[1]} n_{[1]}\ran m_{[0]}\ot n_{[0]}.
$$
(In order to check directly that it is well-defined, note that -- since
$I_s=H_s=\ol H_s=\ol I_s$ -- the $H$-coactions are $H_s$-linear in the sense that  
$$
(m\triangleleft y)_{[0]}\ot (m\triangleleft y)_{[1]}=m_{[0]}\ot m_{[1]}y
\quad\text{and}\quad 
(y\triangleright m)_{[0]}\ot (y\triangleright m)_{[1]}=m_{[0]}\ot ym_{[1]}.
$$
Then the projection $\pi:M\ot N \to M\ot^r N$ is $H_s$-balanced.)
In order to prove essentially strong monoidality of $U$, we need to construct
the inverse of $S^{M,N}$. As a candidate, consider the restriction  
$S_{M,N}:\ M\ot^r N \to M\ot_{H_s} N$
of the canonical
epimorphism $M\ot N \to M\ot_{H_s} N$,
$$
S_{M,N}\bigl(\lan\varepsilon,m_{[1]} n_{[1]}\ran
m_{[0]}\ot n_{[0]}\\bigr)= \lan\varepsilon,m_{[1]} n_{[1]}\ran m_{[0]}\ot_{H_s} n_{[0]}.
$$
The composite $S^{M,N}\circ S_{M,N}$ is the identity map on $M\ot^r N$ and
$S_{M,N}\circ S^{M,N}$ is equal to the idempotent endomorphism of
$M\ot_{H_s} N$ given by
$$
S_{M,N}\circ S^{M,N}(m\ot_{H_s} n )=\lan\varepsilon,m_{[1]} n_{[1]}\ran m_{[0]}\ot_{H_s} n_{[0]}.
$$
If (rm) holds then 
\begin{eqnarray*}
&&\hspace{-2cm}\lan\varepsilon,m_{[1]} n_{[1]}\ran m_{[0]}\ot_{H_s}n_{[0]}\equal{\equref{1.1.2}}
\lan\varepsilon,\varepsilon_s(m_{[1]}) n_{[1]}\ran m_{[0]}\ot_{H_s} n_{[0]}\\
&=&m_{[0]}\ot_{H_s}\varepsilon_s(m_{[1]})\triangleright n=
m_{[0]}\triangleleft \varepsilon_s(m_{[1]})\ot_{H_s} n\\
&=&m_{[0]}\lan\varepsilon,m_{[1]}\varepsilon_s(m_{[2]})\ran\ot_{H_s} n=
m\ot_{H_s} n.
\end{eqnarray*}
If (lm) holds then the same equality follows by \equref{1.1.4}, proving that
$U$ is essentially strong monoidal whenever any of (lm) and (rm) 
holds.\\ 
For any comonoidal prebialgebra $H$, 
\begin{eqnarray*}
&&\hspace{-2cm}\lan\varepsilon,m_{[2]} n_{[1]}\ran m_{[0]}\ot\varepsilon_s(m_{[1]}) \ot n_{[0]}\equal{\equref{1.1.6}}
\lan\varepsilon,m_{[1]}1_{(2)}n_{[1]}\ran m_{[0]}\ot 1_{(1)}\ot n_{[0]}\\
&\equal{\equref{1.1.8}}&\lan\varepsilon,m_{[1]} n_{[2]}\ran m_{[0]}\ot \ol\varepsilon_s(n_{[1]}) \ot n_{[0]}.
\end{eqnarray*}
Thus it follows that $M\ot^r N$ is a subspace of the comodule tensor product
$M\ot^{H_s} N$. \\
The opmonoidal functor $U:\Mm^H\to {}_k\Mm$ factorizes through a unique opmonoidal
functor $\Mm^H\to {}^{H_s}\Mm^{H_s}$. The binary part of the opmonoidal
structure of the latter functor comes out as as the evident inclusion map
$M\ot^r N\to M\ot^{H_s} N$. 
Now if (rm) holds then for any $\sum m^i\ot n^i\in M\ot^{H_s}N$,
\begin{eqnarray*}
&&\hspace{-2cm}\sum\lan\varepsilon,m^i_{[1]} n^i_{[1]}\ran m^i_{[0]}\ot n^i_{[0]}\equal{\equref{1.1.2}}
\sum\lan\varepsilon,\varepsilon_s(m^i_{[1]}) n^i_{[1]}\ran m^i_{[0]}\ot n^i_{[0]}\\
&=&\sum\lan\varepsilon,\ol\varepsilon_s(n^i_{[2]}) n^i_{[1]}\ran m^i\ot n^i_{[0]}=
\sum m^i\ot n^i,
\end{eqnarray*}
so that $M\ot^{H_s} N$ and $M\ot^r N$ are coinciding subspaces of $M\ot N$.
If (lm) holds then the same equality follows by \equref{1.1.4}, proving that
$U$ is essentially strong opmonoidal whenever any of (lm) and (rm) holds.
\end{proof}

As a particular consequence of \thref{thm:forgetful_co}, we have the following
result: given two right comodules $M$ and $N$ over a comonoidal bialgebra
$H$ satisfying (rm) or (lm), we have $H$-colinear isomorphisms
$M\ot_r N\cong M\ot^r N\cong M\ot_{H_s} N\cong M\ot^{H_s} N$.\\
It also follows by \thref{thm:forgetful_co} that a comonoidal prebialgebra $H$
satisfying (rm) or (lm) is a right bicoalgebroid over $H_s$.

\section{Frobenius separable algebras}\selabel{Frob}
In this section, we first collect some properties of Frobenius separable algebras. They are
well-known, and can be found in the literature in various levels of generality. We give
the results with a sketch of proof. 
Recall that a $k$-algebra is called Frobenius if there exists a Frobenius system
$(e,\varepsilon)$, consisting of $e=e^{<1>}\ot e^{<2>}\in A\ot A$ and $\varepsilon\in A^*$
such that $ae=ea$ for all $a\in A$, and $\varepsilon(e^{<1>}) e^{<2>}=
e^{<1>}\varepsilon( e^{<2>})=1$. $A$ is called Frobenius separable if, moreover,
$e^{<1>}e^{<2>}=1$.\\
We have a dual notion for coalgebras. A Frobenius system for a coalgebra $C$
consists of a couple $(\theta,1)$, with $\theta\in (C\ot C)^*$ and $1\in C$
satisfying
\begin{equation}\eqlabel{coalg2.1}
\theta(c\ot d_{(1)})d_{(2)}= c_{(1)} \theta(c_{(2)}\ot d),
\end{equation}
and $\theta(1\ot c)=\theta(c\ot 1)=\varepsilon(c)$, for all $c\in C$. $C$ is
termed coseparable Frobenius if, moreover, $\theta\circ \Delta=\varepsilon$.

\begin{proposition}\prlabel{Frob1}{\bf \cite{Brz}, \cite[Sec. 5.12]{Brz2}, \cite[Cor. 2.6]{CIM},
\cite[Theorem 1.6]{Street}}
Let $A$ be a $k$-module. There is a one-to-one correspondence between
(separable) Frobenius algebra structures on $A$ and (coseparable)
Frobenius coalgebra structures on $A$.
\end{proposition}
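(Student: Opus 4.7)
The plan is to describe the bijection explicitly in both directions, verify each construction produces the claimed structure by direct computation, then check that the two constructions are mutually inverse and that the (co)separability conditions correspond.

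First, starting from a Frobenius algebra $(A,e,\varepsilon)$ with $e=e^{<1>}\otimes e^{<2>}$, I would define a comultiplication by
\[
\Delta(a)=ae^{<1>}\otimes e^{<2>}=e^{<1>}\otimes e^{<2>}a,
\]
the two expressions agreeing thanks to the centrality condition $ae=ea$. I would take $\varepsilon$ itself as counit (the counit axioms collapse to $\varepsilon(e^{<1>})e^{<2>}=e^{<1>}\varepsilon(e^{<2>})=1$), and then coassociativity follows by inserting two copies of $e$ and reusing centrality. For the Frobenius coalgebra data I would put $\theta(a\otimes b):=\varepsilon(ab)$ and choose $1_A$ itself as the distinguished element $1$. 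The identity \equref{coalg2.1} is then the translation of associativity of the algebra multiplication: both sides equal $\varepsilon(a d_{(1)}) d_{(2)}$-type expressions that collapse via the definition of $\Delta$.

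In the opposite direction, starting from a Frobenius coalgebra $(C,\theta,1)$, I would define
\[
a\cdot b:=\theta(a\otimes b_{(1)})b_{(2)}=a_{(1)}\theta(a_{(2)}\otimes b),
\]
where the two forms coincide by \equref{coalg2.1}. The normalisation $\theta(1\otimes c)=\theta(c\otimes 1)=\varepsilon(c)$ gives that $1$ is a unit for this multiplication, and associativity is a two-line computation using \equref{coalg2.1} twice. As Frobenius system I would take $e:=1_{(1)}\otimes 1_{(2)}$ and $\varepsilon$ as the Frobenius functional; the centrality $ae=ea$ follows once more from \equref{coalg2.1}, and the counit equations reduce to the normalisation of $\theta$.

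Next I would check that the two constructions are mutual inverses. Going algebra $\to$ coalgebra $\to$ algebra, the recovered multiplication sends $a\otimes b$ to $\theta(a\otimes b_{(1)})b_{(2)} = \varepsilon(ab_{(1)}) b_{(2)}$, and with $\Delta(b)=be^{<1>}\otimes e^{<2>}$ this equals $\varepsilon(abe^{<1>})e^{<2>} = abe^{<1>}\varepsilon(e^{<2>}) = ab$ (using the counit property on the right). The reverse composition is checked symmetrically, with $\Delta$ recovered because $ae^{<1>}\otimes e^{<2>}=a_{(1)}1_{(1)}\otimes 1_{(2)}$ reduces to $\Delta(a)$ by the unit property and \equref{coalg2.1}.

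Finally I would treat the separability/coseparability correspondence. Under the bijection,
\[
(\theta\circ\Delta)(a)=\theta(ae^{<1>}\otimes e^{<2>})=\varepsilon(ae^{<1>}e^{<2>}),
\]
so $\theta\circ\Delta=\varepsilon$ if and only if $\varepsilon\bigl(a(e^{<1>}e^{<2>}-1)\bigr)=0$ for all $a\in A$, which by nondegeneracy of the Frobenius pairing $(a,b)\mapsto\varepsilon(ab)$ is equivalent to $e^{<1>}e^{<2>}=1$. I expect the main obstacle to be keeping the Sweedler-style book-keeping straight, particularly when verifying \equref{coalg2.1} and the centrality of $e$ in the two directions; everything else is a direct manipulation of the defining identities, which is why the authors announce only a sketch.
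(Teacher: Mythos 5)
Your proposal is correct and follows essentially the same route as the paper: the same formulas $\Delta(a)=ae=ea$, $\theta(a\ot b)=\varepsilon(ab)$ in one direction, the multiplication $ab=\theta(a\ot b_{(1)})b_{(2)}=a_{(1)}\theta(a_{(2)}\ot b)$ with Frobenius system $(\Delta(1),\varepsilon)$ in the other, and the identification of $e^{<1>}e^{<2>}=1$ with $\theta\circ\Delta=\varepsilon$. You simply fill in some of the ``straightforward computations'' the paper leaves implicit (notably the nondegeneracy argument for the separability equivalence), and these check out.
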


\begin{proof}
Let $A$ be an algebra with Frobenius system $(e,\varepsilon)$. Then we define
a comultiplication $\Delta$ on $A$ by $\Delta(a)=ae=ea$. The counit on $A$
will be $\varepsilon$. $\theta\in (A\ot A)^*$ is defined by $\theta(a\ot b)=
\varepsilon(ab)$. Straightforward computations show that $A$ is then a Frobenius
coalgebra with Frobenius system $(\theta,1)$.\\
Conversely, if $A$ is a Frobenius coalgebra with Frobenius system $(\theta,1)$,
then we define a multiplication on $A$ by the formula
$$ab=\theta(a\ot b_{(1)})b_{(2)}= a_{(1)}  \theta(a_{(2)}\ot b).$$
$1$ is a unit for this multiplication, and this makes $A$ into an associative algebra
with Frobenius system $(\Delta(1),\varepsilon)$.\\
It is clear that these two constructions are inverse to each other.
Finally observe that
$e^{<1>}e^{<2>}=1$ is equivalent to $\theta\circ \Delta=\varepsilon$.
\end{proof}

Let $A$ be a coalgebra; for $M\in \Mm^A$ and $N\in {}^A\Mm$, we can consider
the cotensor product
$$M\ot^A N=\{\sum_i m_i\ot n_i\in M\ot N~|~\sum_i m_{i[0]}\ot m_{i[1]}\ot n_i=
\sum_i m_i\ot n_{i[-1]}\ot n_{i[0]}.$$

\begin{proposition}\prlabel{Frob2}
Let $A$ be a Frobenius algebra, and consider the corresponding Frobenius coalgebra
structure on $A$, see \prref{Frob1}. Then we have isomorphisms of categories
$\Mm_A\cong \Mm^A$ and ${}_A\Mm\cong{}^A\Mm$.
\end{proposition}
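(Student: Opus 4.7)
The plan is to construct explicit functors between $\Mm_A$ and $\Mm^A$ (and analogously between ${}_A\Mm$ and ${}^A\Mm$) using the Frobenius system $(e,\varepsilon)$ where $e = e^{<1>}\ot e^{<2>}$, together with the comultiplication $\Delta(a)=ae=ea$ and counit $\varepsilon$ produced by \prref{Frob1}. The desired functors act as the identity on underlying $k$-modules and on morphisms.

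First I would define, for any right $A$-module $M$, a right $A$-coaction by
$$\rho_M:\ M\to M\ot A,\quad \rho_M(m)= me^{<1>}\ot e^{<2>}.$$
Coassociativity follows from coassociativity of $\Delta$ applied to $\Delta(1)=e$, and counitality follows from $e^{<1>}\varepsilon(e^{<2>})=1$. Conversely, for a right $A$-comodule $(M,\rho)$ with $\rho(m)=m_{[0]}\ot m_{[1]}$, I would define a right $A$-action by
$$m\cdot a= \varepsilon(m_{[1]}a)\, m_{[0]}.$$
Associativity and unitality of this action follow from coassociativity and counitality of $\rho$ together with the definition of the coalgebra structure $\Delta(a)=ea$ (so that $\varepsilon(ab)=\theta(a\ot b)$ plays the role of an associative pairing).

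The key step is checking that these two constructions are mutually inverse. The Frobenius condition $ae=ea$ combined with counitality yields the two crucial identities
$$ e^{<1>}\varepsilon(e^{<2>}a) = a = \varepsilon(ae^{<1>})e^{<2>},$$
obtained by applying $\varepsilon$ to one factor of $ae^{<1>}\ot e^{<2>}=e^{<1>}\ot e^{<2>}a$. Starting from a module and going back, the composite action is $m\cdot a=\varepsilon(e^{<2>}a)\,me^{<1>}=m\cdot(e^{<1>}\varepsilon(e^{<2>}a))=m\cdot a$; starting from a comodule, the composite coaction gives $\varepsilon(m_{[1]}e^{<1>})m_{[0]}\ot e^{<2>}=m_{[0]}\ot m_{[1]}$ by the second identity. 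These Frobenius identities are the main technical ingredient; everything else is bookkeeping.

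Finally I would observe that a $k$-linear map $f:M\to N$ is right $A$-linear if and only if it is right $A$-colinear with respect to the induced (co)actions: if $f(ma)=f(m)a$, then $\rho_N(f(m))=f(m)e^{<1>}\ot e^{<2>}=f(me^{<1>})\ot e^{<2>}=(f\ot A)\rho_M(m)$, and the converse follows by applying $\varepsilon(-\cdot a)$ to the second tensor factor. This establishes $\Mm_A\cong\Mm^A$ as isomorphisms of categories. The left-handed statement ${}_A\Mm\cong {}^A\Mm$ is proved by the symmetric formulas $\lambda(m)=e^{<1>}\ot e^{<2>}m$ and $a\cdot m=\varepsilon(am_{[-1]})m_{[0]}$, using the identity $\varepsilon(ae^{<2>})e^{<1>}=a=e^{<2>}\varepsilon(e^{<1>}a)$ obtained analogously.
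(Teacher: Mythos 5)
Your proposal is correct and matches the paper's proof: the paper defines exactly the same coaction $\rho(m)=me^{<1>}\ot e^{<2>}$ on a module and the same action $m\cdot a=m_{[0]}\varepsilon(m_{[1]}a)$ on a comodule, leaving the verifications to the reader. Your write-up simply supplies the details (the Frobenius identities $e^{<1>}\varepsilon(e^{<2>}a)=a=\varepsilon(ae^{<1>})e^{<2>}$ and the correspondence of morphisms) that the paper omits.
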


\begin{proof}
On $M\in \Mm^A$, we define an $A$-action by $ma=m_{[0]}\varepsilon(m_{[1]}a)$.
On $M\in\Mm_A$, we define an $A$-coaction by $\rho(m)=me^{<1>}\ot e^{<2>}$.
\end{proof}

Let $A$ be a separable Frobenius algebra, and consider $M\in \Mm_A\cong \Mm^A$
and $N\in {}_A\Mm\cong{}^A\Mm$. We have a projection
$\pi:\ M\ot N\to M\ot N$, $\pi(m\ot n)=me^{<1>}\ot e^{<2>}n$. It is clear that
$\im(\pi)=\Ker({\rm id}-\pi)$ and $\Ker(\pi)=\im({\rm id}-\pi)$

\begin{proposition}\prlabel{Frob3}{\bf \cite[Remarks 5.2 and 7.5, Prop. 5.3]{Schau},
\cite[p. 164-165]{Pastro}}
With notation as above, we have
$$\im(\pi)=M\ot^A N\cong M\ot_A N=(M\ot N)/\Ker(\pi).$$
\end{proposition}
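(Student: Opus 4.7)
The plan is to establish both identifications by exploiting the Frobenius relation $ae^{<1>}\ot e^{<2>}=e^{<1>}\ot e^{<2>}a$, separability $e^{<1>}e^{<2>}=1$, and the counit property $\varepsilon(e^{<1>})e^{<2>}=e^{<1>}\varepsilon(e^{<2>})=1$. First I would verify that $\pi$ is idempotent: coassociativity of $\Delta(1)=e$ (valid by \prref{Frob1}) combined with separability yields $\pi^2(m\ot n)=me^{<1>}e^{<1'>}\ot e^{<2'>}e^{<2>}n=me^{<1>}\ot e^{<2>}n=\pi(m\ot n)$, so $M\ot N=\im(\pi)\oplus\Ker(\pi)$ and in particular $\im(\pi)\cong (M\ot N)/\Ker(\pi)$.

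To identify $(M\ot N)/\Ker(\pi)$ with $M\ot_A N$, I would show that $\Ker(\pi)$ coincides with the kernel $K$ of the canonical epimorphism $M\ot N\to M\ot_A N$. The inclusion $K\subseteq\Ker(\pi)$ follows directly from the Frobenius relation: $\pi(ma\ot n-m\ot an)=mae^{<1>}\ot e^{<2>}n-me^{<1>}\ot e^{<2>}an=0$. For the reverse inclusion, in $M\ot_A N$ we have $[\pi(m\ot n)]=[me^{<1>}\ot e^{<2>}n]=[m\ot e^{<1>}e^{<2>}n]=[m\ot n]$ by separability, hence $m\ot n-\pi(m\ot n)\in K$, and every $x\in\Ker(\pi)$ equals $x-\pi(x)\in K$.

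To identify $\im(\pi)$ with $M\ot^A N$, I would use the symmetric version of \prref{Frob2} to observe that the left $A$-coaction on $N$ is $\lambda(n)=e^{<1>}\ot e^{<2>}n$. The cotensor condition on $\sum_i m_i\ot n_i$ then reads
\[
\sum_i m_ie^{<1>}\ot e^{<2>}\ot n_i=\sum_i m_i\ot e^{<1>}\ot e^{<2>}n_i \quad \text{in } M\ot A\ot N.
\]
Applying the map $M\ot A\ot N\to M\ot N$, $m\ot a\ot n\mapsto ma\ot n$, the left-hand side collapses to $\sum_i m_ie^{<1>}e^{<2>}\ot n_i=\sum_i m_i\ot n_i$ by separability, while the right-hand side becomes $\pi(\sum_i m_i\ot n_i)$; this gives $M\ot^A N\subseteq\im(\pi)$. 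Conversely, for $x=\pi(x)=\sum_j m_je^{<1>}\ot e^{<2>}n_j\in\im(\pi)$, the two sides of the cotensor condition expand to $\sum_j m_je^{<1>}e^{<1'>}\ot e^{<2'>}\ot e^{<2>}n_j$ and $\sum_j m_je^{<1>}\ot e^{<1'>}\ot e^{<2'>}e^{<2>}n_j$, whose equality reduces to $(\Delta\ot A)\Delta(1)=(A\ot\Delta)\Delta(1)$, i.e., coassociativity of $\Delta$ at $1$.

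The main obstacle is little more than disciplined bookkeeping with the Frobenius data and keeping the left and right coaction conventions straight; the only genuinely conceptual step is recognizing the closing identity as coassociativity of $\Delta$ applied to the unit.
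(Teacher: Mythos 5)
Your proof is correct and follows essentially the same route as the paper: the same identification of $\Ker(\pi)$ with the relations submodule defining $M\ot_A N$, the same use of separability and the centrality relation $ae^{<1>}\ot e^{<2>}=e^{<1>}\ot e^{<2>}a$ to show $\im(\pi)=M\ot^A N$, and the same passage to the quotient. The only cosmetic differences are that you obtain $\im(\pi)\cong (M\ot N)/\Ker(\pi)$ from the idempotent splitting rather than from explicitly constructed mutually inverse maps, and you package the double application of the Frobenius relation as coassociativity of $\Delta$ at $1$, which is a legitimate reformulation.
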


\begin{proof}
Let $x=\sum_i m_i\ot n_i\in M\ot N$. If $x\in M\ot^A N$, then
$$\sum_i m_ie^{<1>}\ot e^{<2>}\ot n_i=\sum_i m_i\ot e^{<1>}\ot e^{<2>}n_i.$$
Multiplying the second and third tensor factor, and using the separability, we find
that $x=\pi(x)$, so $x\in \im(\pi)$. Conversely, if $x\in \im(\pi)$, then
\begin{eqnarray*}
&&\hspace*{-1cm}
\sum_i m_i \ot e^{<1>}\ot e^{<2>} n_i=
\sum_i m_ie^{<1'>} \ot e^{<1>}\ot e^{<2>} e^{<2'>}n_i\\
&=&\sum_i m_ie^{<1'>} \ot e^{<2'>}e^{<1>}\ot e^{<2>} n_i\\
&=& \sum_i m_ie^{<1>}e^{<1'>} \ot e^{<2'>}\ot e^{<2>} n_i
= \sum_i m_ie^{<1'>}\ot e^{<2'>}\ot n_i,
\end{eqnarray*}
and it follows that $x\in M\ot^A N$.\\
Now let $I$ be the submodule of $M\ot N$ generated by elements of the form
$ma\ot n-m\ot an$. Then $M\ot_A N=(M\ot N)/I$, so it suffices to show that
$I=\Ker(\pi)$. If $x=ma\ot n-m\ot an$, then $\pi(x)=
mae^{<1>}\ot e^{<2>} n-me^{<1>}\ot e^{<2>}an=0$, so $I\subset \Ker(\pi)$.
Take $y= m\ot n-me^{<1>}\ot e^{<2>}n\in \Ker(\pi)=\im({\rm id}-\pi)$.
Then $y= m\ot e^{<1>}e^{<2>}n-me^{<1>}\ot e^{<2>} n\in I$.\\
The canonical surjection $M\ot N\to (M\ot N)/\Ker(\pi)$ restricts to a map
$$p:\ \im(\pi)\to (M\ot N)/\Ker(\pi).$$
$\pi:\ M\ot N\to M\ot N$ induces a well-defined map
$$q:\ (M\ot N)/\Ker(\pi)\to \im (\pi).$$
It is straightforward to show that $q$ is the inverse of $p$. This final part of the
proof uses arguments that are similar to the ones presented  after
Theorems \ref{th:2.4} and \ref{th:3.1}.
\end{proof}

From now on, let $H$ be a weak bialgebra. We have seen in Lemmas \ref{le:1.2} and
\ref{le:1.4}  that $H_t=\ol{H}_t$ and $H_s=\ol{H}_s$ are subalgebras of $H$.
It was shown in \cite[Prop. 1.6]{Sz} that $H_s$ and $H_t$ are in fact
separable Frobenius algebras.

\begin{proposition}\prlabel{alg1}
Let $H$ be a weak bialgebra. Then $H_t$ and $H_s$ are Frobenius separable
$k$-algebras. The separability idempotents of $H_t$ and $H_s$ are
$$e_t=\varepsilon_t(1_{(1)})\ot 1_{(2)}=1_{(2)}\ot \ol{\varepsilon}_t(1_{(1)})~~{\rm and}~~
e_s=1_{(1)}\ot \varepsilon_s(1_{(2)})=\ol{\varepsilon}_s(1_{(2)})\ot 1_{(1)}.$$
The Frobenius systems for $H_t$ and $H_s$ are respectively
$(e_t,\varepsilon_{|H_t})$ and $(e_s, \varepsilon_{|H_s})$.
\end{proposition}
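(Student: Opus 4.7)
The plan is to verify the four separable Frobenius axioms directly for $(e_t,\varepsilon|_{H_t})$; the statements for $H_s$ follow by a symmetric argument (interchanging the roles of $1_{(1)}$ and $1_{(2)}$, of $\varepsilon_t,\ol\varepsilon_t$ with $\varepsilon_s,\ol\varepsilon_s$, and of (rc) with (lc)). Membership $e_t\in H_t\ot H_t$ is immediate: $\varepsilon_t(1_{(1)})\in H_t$ by the definition of $\varepsilon_t$, while \leref{1.5} (applied via (rc)) gives $\Delta(1)\in H_s\ot H_t$, so $1_{(2)}\in H_t$ as well. The equality of the two displayed expressions for $e_t$ is precisely \equref{eq:eps_1_tw}. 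The counit identities $\varepsilon(\varepsilon_t(1_{(1)}))\,1_{(2)}=1=\varepsilon_t(1_{(1)})\,\varepsilon(1_{(2)})$ reduce at once to $\varepsilon\circ\varepsilon_t=\varepsilon$ (which follows by unfolding $\varepsilon_t$ and using counitality of $\Delta(1)$, specifically $\lan\varepsilon,1_{(2)}\ran 1_{(1)}=1$) together with $\varepsilon_t(1)=1$. Separability $\varepsilon_t(1_{(1)})\cdot 1_{(2)}=1$ collapses via multiplicativity of $\Delta$ (so $1_{(1')}1_{(1)}\ot 1_{(2')}1_{(2)}=\Delta(1)$) to $\lan\varepsilon,1_{(1)}\ran 1_{(2)}=1$.

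The main obstacle is centrality: $z\,\varepsilon_t(1_{(1)})\ot 1_{(2)}=\varepsilon_t(1_{(1)})\ot 1_{(2)}\,z$ for $z\in H_t$. Recognizing $e_t=\Delta_t(1)$ with $\Delta_t$ the comultiplication from \prref{1.12}, I show that both sides equal $\Delta_t(z)$. For the left-hand side: \leref{1.4} gives $z\,\varepsilon_t(1_{(1)})=\varepsilon_t(z\cdot 1_{(1)})$, and then \equref{1.10.1} applies (since $1_{(1)}\in H_s$ and $z\in H_t$ under (c)) to give $z\cdot 1_{(1)}=1_{(1)}\cdot z$, so the left-hand side equals $\varepsilon_t(1_{(1)} z)\ot 1_{(2)}=\Delta_t(z)$ by \prref{1.12}. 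For the right-hand side: applying $\varepsilon_t\ot\Id$ to the second identity of \equref{eq:sbar_t} (with $h=z$, using $\varepsilon_t(z)=z$) yields $\varepsilon_t(1_{(1)})\ot 1_{(2)}\,z=\varepsilon_t(1_{(1)}\ol\varepsilon_s(z))\ot 1_{(2)}$. Then applying $\varepsilon$ to \equref{1.1.4} produces the scalar identity $\lan\varepsilon,h\,\ol\varepsilon_s(g)\ran=\lan\varepsilon,hg\ran$ for all $h,g\in H$; specializing $h=1_{(1')}1_{(1)}$, $g=z$, and summing weighted by $1_{(2')}$ gives $\varepsilon_t(1_{(1)}\ol\varepsilon_s(z))=\varepsilon_t(1_{(1)} z)$, identifying the right-hand side with $\Delta_t(z)$ as well.

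The hard part is the right-hand-side computation, which hinges on the slightly non-obvious observation that $\varepsilon_t$ does not distinguish $\ol\varepsilon_s(z)$ from $z$ when inserted on the right after $1_{(1)}$. The symmetric argument for $H_s$ uses \equref{eq:tbar_s}, \equref{eq:eps_1}, and the dual scalar identity $\lan\varepsilon,\ol\varepsilon_t(g)\,h\ran=\lan\varepsilon,gh\ran$ derived from \equref{1.1.3}. A more economical alternative is to cite \cite[Prop.~1.6]{Sz} for the existence of a separable Frobenius structure on $H_s$ and $H_t$ and merely verify that our explicit $(e_t,\varepsilon|_{H_t})$ and $(e_s,\varepsilon|_{H_s})$ coincide with Szlach\'anyi's systems.
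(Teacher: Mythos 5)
Your proof is correct, but the key step --- centrality of $e_t$ --- is handled by a genuinely different argument from the paper's. The paper first establishes two ``dual basis'' identities, $\lan \varepsilon,z\varepsilon_t(1_{(1)})\ran 1_{(2)}=z$ (via (rm)) and $\varepsilon_t(1_{(1)})\lan \varepsilon,1_{(2)}z\ran=z$ (via (lm)), valid for all $z\in H_t$; the counit conditions are then just the case $z=1$, and centrality follows formally by applying the second identity to $z\varepsilon_t(1_{(1)})\in H_t$ and then the first to $1_{(2')}z$ --- no further input from Section 1 is needed. You instead recognize $e_t$ as $\Delta_t(1)$ and show that $z\,e^{<1>}\ot e^{<2>}$ and $e^{<1>}\ot e^{<2>}z$ both compute $\Delta_t(z)$, which requires assembling \leref{1.4}, \equref{1.10.1}, the second identity of \equref{eq:sbar_t}, and the scalar consequence of \equref{1.1.4}; I checked these steps and they all go through (in particular the use of $\varepsilon_t(1_{(2)})=1_{(2)}$ and $z1_{(1)}\ot 1_{(2)}=1_{(1)}z\ot 1_{(2)}$ is justified under (rc) by \equref{1.5.1}). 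The paper's route is more self-contained and isolates exactly which axiom ((rm) or (lm)) drives each half of the Frobenius property; yours buys the explicit relation $\Delta_t(z)=ze_t=e_tz$, which is precisely the compatibility in \prref{Frob1} between the Frobenius algebra structure and the coalgebra structure of \prref{1.12} --- a point the paper only records after the proposition. Your concluding alternative of citing Szlach\'anyi \cite[Prop. 1.6]{Sz} is exactly what the paper acknowledges and then declines to do (``we include a proof for the sake of completeness'').
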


\begin{proof}
We include a proof for the sake of completeness.
Let $e_t=\varepsilon_t(1_{(1)})\ot 1_{(2)}=e^{<1>}\ot e^{<2>}$.
For all $z\in H_t$, we have that
\begin{eqnarray}
&&\hspace*{-2cm}
\lan \varepsilon,z\varepsilon_t(1_{(1)})\ran 1_{(2)}=
\lan \varepsilon, z1_{(2')}\ran \lan\varepsilon, 1_{(1')}1_{(1)}\ran 1_{(2)}\nonumber\\
&\equal{\rm (rm)}&\lan \varepsilon, z1_{(1)}\ran1_{(2)}=\ol{\varepsilon}_t(z)=z;
\eqlabel{alg1.1}\\
&&\hspace*{-2cm}
\varepsilon_t(1_{(1)})\lan \varepsilon,1_{(2)}z\ran
=\lan \varepsilon, 1_{(1')}1_{(1)}\ran \lan \varepsilon, 1_{(2)}z\ran1_{(2')}\nonumber\\
&\equal{\rm (lm)}&\lan \varepsilon, 1_{(1')}z\ran 1_{(2')}=\varepsilon_t(z)=z.
\eqlabel{alg1.2}
\end{eqnarray}
Taking $z=1$ in (\ref{eq:alg1.1}-\ref{eq:alg1.2}), we find that $\varepsilon(e^{<1>})e^{<2>}=
e^{<1>}\varepsilon(e^{<2>})=1$. Then we compute for all $z\in H_t$ that
\begin{eqnarray*}
&&\hspace*{-2cm}
ze^{<1>}\ot e^{<2>}=z\varepsilon_t(1_{(1)})\ot 1_{(2)}
\equal{(*)}\varepsilon_t(1_{(1')})\lan\varepsilon, 1_{(2')}z\varepsilon_t(1_{(1)})\ran\ot 1_{(2)}\\
&\equal{(**)}& \varepsilon_t(1_{(1')})\ot 1_{(2')}z= e^{<1>}\ot e^{<2>}z.
\end{eqnarray*}
At $(*)$, we applied \equref{alg1.2} with $z$ replaced by $z\varepsilon_t(1_{(1)})$,
and at $(**)$, we applied \equref{alg1.1} with $z$ replaced by $1_{(2')}z$.
Finally, it is obvious that $e^{<1>} e^{<2>}=\varepsilon_t(1_{(1)}) 1_{(2)}=1$.
\end{proof}

It follows from Propositions \ref{pr:Frob1} and \ref{pr:alg1} that $H_s$ and $H_t$
are coseparable Frobenius coalgebras. The comultiplication is given by
\begin{eqnarray*}
\Delta_t(z)&=& 
z\varepsilon_t(1_{(1)})\ot 1_{(2)}=
 \varepsilon_t(z1_{(1)})\ot 1_{(2)}
\equal{(\ref{eq:1.10.1}),(\ref{eq:1.5.1})}\varepsilon_t(1_{(1)}z)\ot \varepsilon_t(1_{(2)})
\end{eqnarray*}
(where the second equality follows by \leref{1.4}),  so that the
comultiplication coincides with the comultiplication introduced in \prref{1.12}.\\
For $M,N\in {}_H\Mm$,  in terms of the $H_t$-actions obtained in the
proof of \thref{thm:forgetful},  we compute the projection $\pi:\ M\ot
N\to M\ot N$ from \prref{Frob3}: 
$$
\pi(m\ot n)=m \triangleleft  \varepsilon_t(1_{(1)})\ot 1_{(2)}  \triangleright  n=
(\ol{\varepsilon}_s\circ \varepsilon_t)(1_{(1)})m\ot 1_{(2)}n=1\cdot (m\ot n).$$
Thus $\pi$ coincides with the map $\pi$ introduced  after \thref{2.4}.   
From \prref{Frob3}, we now have  an alternative derivation of  the
following result,  which follows also by \thref{thm:forgetful}: 

\begin{corollary}\colabel{alg4}
Let $H$ be a weak bialgebra, and $M,N\in {}_H\Mm$. Then $M$ and $N$ are 
$H_t$-bimodules and $H_t$-bicomodules, and
$$M  \ot^{H_t}
N=M\ot^l N\cong M\ot_lN=M\ot_{H_t} N.$$
\end{corollary}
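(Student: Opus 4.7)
The plan is to realize the whole statement as a direct application of Proposition~\ref{pr:Frob3} to the separable Frobenius algebra $H_t$, matching up the two candidate projections $M\ot N\to M\ot N$: the one defined by the separability idempotent of $H_t$ and the one ($\pi$) used to define $\ot^l$ and $\ot_l$ in Section~\ref{se:2}.

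First I would record the ingredients. By Proposition~\ref{pr:alg1}, $H_t$ is a separable Frobenius $k$-algebra with separability idempotent $e_t=\varepsilon_t(1_{(1)})\ot 1_{(2)}$, so Proposition~\ref{pr:Frob1} endows $H_t$ with a coseparable Frobenius coalgebra structure, and Proposition~\ref{pr:Frob2} identifies right $H_t$-modules with right $H_t$-comodules (and likewise on the left). For any $M,N\in {}_H\Mm$, Theorem~\ref{th:thm:forgetful} produces the $H_t$-bimodule actions $z\triangleright m=z.m$ and $m\triangleleft z=\ol\varepsilon_s(z).m$; transporting across Proposition~\ref{pr:Frob2} immediately yields the advertised $H_t$-bicomodule structures, proving the first assertion of the corollary.

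Next comes the identification of the projections. The projection of Proposition~\ref{pr:Frob3} for the $H_t$-module structures on $M$ and $N$ is
$$
\pi'(m\ot n)=m\triangleleft e^{<1>}\ot e^{<2>}\triangleright n=\ol\varepsilon_s(\varepsilon_t(1_{(1)})).m\ot 1_{(2)}.n,
$$
which, using $\ol\varepsilon_s\circ\varepsilon_t=\ol\varepsilon_s$ from \equref{1.9.1} (valid under (lm)) and the definition of $\ol\varepsilon_s$, reduces to $1_{(1)}.m\ot 1_{(2)}.n=\pi(m\ot n)$, the projection introduced after Theorem~\ref{th:2.4}. Thus the two projections coincide. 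Proposition~\ref{pr:Frob3} then gives
$$
M\ot^{H_t}N=\im(\pi)=M\ot^l N\qquad\text{and}\qquad M\ot_{H_t}N=(M\ot N)/\Ker(\pi)=M\ot_l N,
$$
and the middle isomorphism $M\ot^l N\cong M\ot_l N$ is the map $\ol\pi$ already recorded after Theorem~\ref{th:2.4}.

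The only non-mechanical step is verifying that the Frobenius separability projection of $H_t$ really equals the projection $\pi$ of Section~\ref{se:2}; this is where one has to be careful that $\ol\varepsilon_s$, not $\varepsilon_s$, appears in the right $H_t$-action from Theorem~\ref{th:thm:forgetful}, and that the identity $\ol\varepsilon_s(\varepsilon_t(1_{(1)}))\ot 1_{(2)}=1_{(1)}\ot 1_{(2)}$ holds in a weak bialgebra. Everything else is simply assembling results already proved in the paper. No further computation is needed since the isomorphism classes of the four tensor products and the identifications between them are then forced by Propositions~\ref{pr:Frob2} and \ref{pr:Frob3}.
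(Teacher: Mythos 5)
Your proof is correct and takes essentially the same route as the paper, which likewise derives the corollary by computing the Frobenius--separability projection of $H_t$ via the idempotent $e_t=\varepsilon_t(1_{(1)})\ot 1_{(2)}$ and the actions from \thref{thm:forgetful}, and identifying it with the projection $\pi$ introduced after \thref{2.4} before invoking \prref{Frob3}. The only quibble is a label slip: the identity $\ol{\varepsilon}_s\circ\varepsilon_t=\ol{\varepsilon}_s$ is \equref{1.9.2}, a consequence of (rm) rather than (lm), which is immaterial here since $H$ is a weak bialgebra.
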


For $M,N\in \Mm^H$, we compute  in terms of the $H_s$-actions obtained in
the proof of \thref{thm:forgetful_co}  the projection $\pi:\ M\ot N\to
M\ot N$ from \prref{Frob3}; for $m\in M$, $n\in N$, we find
\begin{eqnarray*}
&&\hspace*{-2cm}
\pi(m\ot n)=m \triangleleft  1_{(1)}\ot \varepsilon_s(1_{(2)}) \triangleright n\\
&=& \lan \varepsilon,m_{[1]}1_{(1)}\ran \lan \varepsilon, 1_{(1')}n_{[1]}\ran
\lan \varepsilon, 1_{(2)}1_{(2')}\ran m_{[0]}\ot n_{[0]}\\
&=& \lan \varepsilon, m_{[1]}n_{[1]}\ran m_{[0]}\ot n_{[0]}
=\varepsilon\cdot (m\ot n).
\end{eqnarray*}
We conclude that $\pi$ coincides with the map $\pi$ introduced  after
\thref{3.1}. 
From \prref{Frob3}, we now have  an alternative derivation of  the
following result,  which follows also by \thref{thm:forgetful_co}: 

\begin{corollary}\colabel{alg5}
Let $H$ be a weak bialgebra, and $M,N\in\Mm^H$. Then $M$ and $N$ are 
$H_s$-bimodules and $H_s$-bicomodules, and
$$M \ot^{H_s}
N=M \ot^r N\cong M\ot_rN=M\ot_{H_s} N.$$
\end{corollary}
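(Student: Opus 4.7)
The plan is to combine the Frobenius--coseparable framework developed in \seref{Frob} with the projection computation performed in the paragraph immediately preceding the corollary, parallel to the derivation of \coref{alg4}. Starting from a right $H$-comodule $M$ over the weak bialgebra $H$, the monoidal structure of the forgetful functor (\thref{thm:forgetful_co}) equips $M$ with an $H_s$-bimodule structure
$$y\triangleright m=\lan\varepsilon,ym_{[1]}\ran m_{[0]},\qquad m\triangleleft y=\lan\varepsilon,m_{[1]}y\ran m_{[0]},$$
and the same for $N$. By \prref{alg1}, $H_s$ is a separable Frobenius $k$-algebra; by \prref{Frob1} it is therefore a coseparable Frobenius coalgebra, and \prref{Frob2} then allows us to reinterpret these $H_s$-bimodule structures as $H_s$-bicomodule structures, establishing the first assertion of the corollary.

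Now I would invoke \prref{Frob3} with $A=H_s$ and the pair $(M,N)$. It furnishes a projection $\pi:M\ot N\to M\ot N$, $\pi(m\ot n)=m\triangleleft e^{<1>}\ot e^{<2>}\triangleright n$, built from the separability idempotent $e_s=1_{(1)}\ot\varepsilon_s(1_{(2)})$ of \prref{alg1}, together with the identifications
$$\im(\pi)=M\ot^{H_s}N\cong M\ot_{H_s}N=(M\ot N)/\Ker(\pi).$$
The remaining step is then to match this $\pi$ with the projection $\pi(m\ot n)=\varepsilon\cdot(m\ot n)$ used in \seref{3} to define $\ot^r$ and $\ot_r$. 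This has essentially already been done: the computation displayed between the two corollaries shows that
$$m\triangleleft 1_{(1)}\ot\varepsilon_s(1_{(2)})\triangleright n=\lan\varepsilon,m_{[1]}n_{[1]}\ran m_{[0]}\ot n_{[0]},$$
so the two projections literally coincide. Consequently $M\ot^r N=\im(\pi)$ and $M\ot_r N=(M\ot N)/\Ker(\pi)$, and feeding these equalities into the display obtained from \prref{Frob3} yields the chain of isomorphisms claimed.

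The only point requiring any care is making sure that when we apply \prref{Frob2} to convert $H_s$-(bi)modules to $H_s$-(bi)comodules, the induced coactions are the ones implicit in the cotensor product $M\ot^{H_s}N$ appearing on the left. This is, however, automatic: \prref{Frob2} is a strict isomorphism of categories, so the associated cotensor product of $H_s$-bicomodules is identified with the projection $\pi$ built from the Frobenius system via \prref{Frob3}, and no further verification beyond the projection identification above is needed. Everything else is a direct transcription of the separable Frobenius computation from \coref{alg4} with $H_t$ replaced by $H_s$ and with modules replaced by comodules throughout.
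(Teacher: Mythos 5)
Your proposal is correct and follows essentially the same route as the paper: the paper also derives \coref{alg5} by combining \prref{alg1}, \prref{Frob1}, \prref{Frob2} and \prref{Frob3} with the explicit computation that the Frobenius projection $m\triangleleft 1_{(1)}\ot\varepsilon_s(1_{(2)})\triangleright n$ equals $\varepsilon\cdot(m\ot n)$, i.e.\ the projection defining $\ot^r$ and $\ot_r$ after \thref{3.1}. No gaps beyond the level of detail the paper itself leaves implicit.
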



\begin{thebibliography}{99}
\bibitem{B}
G. B\"ohm, Hopf algebroids, in ``Handbook of Algebra'' {\bf 6}, Elsevier, 2009, 173--235.

\bibitem{BLS}
G. B\"ohm, S. Lack and R.H. Street, 
Weak bimonads and weak Hopf monads,
{\sl J. Algebra} {\bf 328 } (2011), 1--30.

\bibitem{BMS}
G. B\"ohm, F. Nill, K. Szlach\'anyi, Weak Hopf algebras I. Integral theory and
$C^*$-structure, {\sl J. Algebra} {\bf 221} (1999), 385--438. 

\bibitem{BMS2}
G. B\"ohm, K. Szlach\'anyi, Weak Hopf algebras II. Representation theory, dimensions,
and the Markov Trace, {\sl J. Algebra} {\bf 233} (2000), 156--212.

\bibitem{Brz}
T. Brzezi\'nski, Towers of corings, {\sl Comm. Algebra} (2003), 2015--2026.

\bibitem{Brz2}
T. Brzezi\'nski, Comodules and corings, in ``Handbook of Algebra vol. 6''
(ed. M. Hazewinkel), Elsevier, 2009, pp. 237--318.

\bibitem{CIM}
S. Caenepeel, B. Ion, G. Militaru, The structure of Frobenius algebras and
separable algebras, {\sl K-Theory} {\bf  19} (2000), 365--402. 

\bibitem{DascalescuNR}
S. D\v asc\v alescu, C. N\v ast\v asescu and \c S. Raianu,
``Hopf algebras: an Introduction'', {\sl Monographs Textbooks in Pure
Appl. Math.} {\bf 235}
Marcel Dekker, New York, 2001.

\bibitem{K}
C. Kassel, ``Quantum Groups", {\sl Graduate Texts Math.}
{\bf 155}, Springer Verlag, Berlin, 1995.

\bibitem{L}
R. G. Larson, Coseparable Hopf algebras, {\sl J. Pure Appl. Algebra} {\bf 3} (1973),
261--267.

\bibitem{Mont}
S. Montgomery, ``Hopf algebras and their actions on rings", American
Mathematical
Society, Providence, 1993.

\bibitem{NV}
D. Nikshych, L. Vainerman, Finite quantum groupoids and their applications,
in ``New directions in Hopf algebras'', {\sl Math. Sci. Res. Inst. Publ.} {\bf 43},
Cambridge University Press, Cambridge, 2002, 211--262.

\bibitem{Nill}
F. Nill, Axioms for weak bialgebras, preprint q-alg/9805104.

\bibitem{Pastro}
C. Pastro, R. Street, Weak Hopf monoids in braided monoidal categories,
{\sl Algebra Number Theory} {\bf 3} (2009), 149--207.

\bibitem{Schau}
P. Schauenburg, Weak Hopf algebras and quantum groups, {\sl Banach Center Publ.}
{\bf 61}, Polish Acad. Sci., Warsaw, 2003, 171--188.

\bibitem{Street}
R. Street, Frobenius monads and pseudomonoids, {\sl J. Math. Phys.} {\bf 45} (2004),
3930--3948.

\bibitem{Sz}
K. Szlach\'anyi, Finite quantum groupoids and inclusions of finite type,
{\sl Fields Inst. Comm.} {\bf 30}, Amer. Math. Soc., Providence, 2001, 393--407.

\bibitem{Sz05}
K. Szlach\'anyi, Adjointable monoidal functors and quantum groupoids. [in:] ``Hopf algebras
in noncommutative geometry and physics'',
{\sl Lecture Notes in Pure and Appl. Math.}, {\bf 239}, Dekker, New York,
2005, 291--307. 

\end{thebibliography}
\end{document}